\numberwithin{equation}{subsection}
\let\OLDthebibliography\thebibliography
\renewcommand\thebibliography[1]{
  \OLDthebibliography{#1}
  \setlength{\parskip}{0pt}
  \setlength{\itemsep}{0pt plus 0.3ex}
}
\newtheorem{theorem}{Theorem}
\newtheorem{corollary}[theorem]{Corollary}
\newtheorem{lemma}[theorem]{Lemma}
\newtheorem{remark}[theorem]{Remark}
\newtheorem{example}[theorem]{Example}
\newtheorem{definition}[theorem]{Definition}
\numberwithin{theorem}{section}
\newcommand{\A}{\mathcal{A}}
\newcommand{\aaa}{\mathfrak{a}}
\newcommand{\Basis}{\mathcal{C}_n}
\newcommand{\BasisOne}{\mathcal{C}^{1,\K}_n}
\newcommand{\B}{\mathcal{B}_m}
\newcommand{\BB}{\mathcal{B}^{\F}_{l,m}(\kappan,\een)}
\newcommand{\BBnu}{\mathcal{B}^{\F}_{l,m}(\bnu,\een)}
\newcommand{\Bb}{\pmb{\mathfrak{b}}}
\newcommand{\BBB}{\mathcal{B}_{n+1}}
\newcommand{\bbb}{\mathfrak{b}}
\newcommand{\BBk}{\mathcal{B}_k}
\newcommand{\balpha}{\boldsymbol{\alpha}}
\newcommand{\bbeta}{\boldsymbol{\beta}}
\newcommand{\bgamma}{\boldsymbol{\gamma}}
\newcommand{\bGamma}{\boldsymbol{\Gamma}}
\newcommand{\bbkap}{\boldsymbol{\kappa}}
\newcommand\bbn{\mathbf{b}}
\newcommand\bbs{\mathsf{s}}
\newcommand\bbt{\boldsymbol{\mathsf{t}}}
\newcommand\bbu{\mathsf{u}}
\newcommand\bbv{\mathsf{v}}
\newcommand{\Bchico}{\mathcal{B}_{n-1}}
\newcommand\be{\mathbb{E}}
\newcommand\ben{\boldsymbol{e}}
\newcommand{\belongs}{ \in }
\newcommand{\Belongs}{ \ni}
\newcommand{\Bg}{\pmb{\mathfrak{g}}}
\newcommand\bi{\boldsymbol{i}}
\newcommand{\bif}{{\underline{\boldsymbol{i}}}}
\newcommand{\bjf}{{\underline{\boldsymbol{j}}}}
\newcommand\bj{\boldsymbol{j}}
\newcommand\blambda{{\boldsymbol\lambda}}
\newcommand\bmu{{\boldsymbol\mu}}
\newcommand\brho{\boldsymbol{\rho}}
\newcommand\bzeta{{\boldsymbol\zeta}}
\newcommand\bn{\boldsymbol{n}}
\newcommand\bnu{{\boldsymbol\nu}}
\newcommand\boxbluek{\color{blue}\boldsymbol{[k]}}
\newcommand\boxbluej{\color{blue}\boldsymbol{[j]}}
\newcommand\boxredk{\color{red}\boldsymbol{[k]}}
\newcommand\boxredj{\color{blue}\boldsymbol{[j]}}
\newcommand{\bpartial}{\boldsymbol{\partial}}
\newcommand{\bCpartial}{\mathbf{C}\boldsymbol{\partial}}
\newcommand{\Bs}{\pmb{\mathfrak{s}}}
\newcommand\bS{\Sigma}
\newcommand\bs{\mathbf{s}}
\newcommand{\bT}{\pmb{\mathfrak{t}}}
\newcommand\bt{\mathbf{t}}
\newcommand{\bTI}{ \bT^{-1} (\bT_{\theta}^{\blambda}(1))}
\newcommand{\bTII}{ \bT^{-1} (\bT_{\theta}^{\blambda}(2))}
\newcommand{\bTj}{ \bT^{-1} (\bT_{\theta}^{\blambda}(j))}
\newcommand{\bTn}{ \bT^{-1} (\bT_{\theta}^{\blambda}(n))}
\newcommand\btau{{\boldsymbol\tau}}
\newcommand{\Bu}{\pmb{\mathfrak{u}}}
\newcommand{\Bv}{\pmb{\mathfrak{v}}}
\newcommand\bu{\mathbf{u}}
\newcommand\bv{\mathbf{v}}
\newcommand{\C}{\mathcal{C}_{\rm{YH}}}
\newcommand\calA{\mathcal{A}}
\newcommand\calB{\mathcal{B}}
\newcommand\calC{\mathcal{C}}
\newcommand\calD{\mathcal{D}}
\newcommand\calE{\mathcal{E}}
\newcommand\calF{\mathcal{F}}
\newcommand\calG{\mathcal{G}}
\newcommand\calH{\mathcal{H}}
\newcommand\calL{\mathcal{L}}
\newcommand\calM{\mathcal{M}}
\newcommand\calN{\mathcal{N}}
\newcommand\calO{\mathcal{O}}
\newcommand\calP{\mathcal{P}}
\newcommand\calQ{\mathcal{Q}}
\newcommand\calR{\mathcal{R}}
\newcommand\calS{\mathcal{S}}
\newcommand\calT{\mathcal{T}}
\newcommand\calU{\mathcal{U}}
\newcommand\calV{\mathcal{V}}
\newcommand\calW{\mathcal{W}}
\newcommand\calX{\mathcal{X}}
\newcommand\calY{\mathcal{Y}}
\newcommand\calZ{\mathcal{Z}}
\newcommand\bcalJ{\boldsymbol{\mathcal{J}}}
\newcommand\bcalL{\boldsymbol{\mathcal{L}}}
\newcommand\bcalm{\boldsymbol{\mathcal{m}}}
\newcommand\bcalNT{\boldsymbol{\mathcal{NT}}}
\newcommand\bcalNGAT{\boldsymbol{\mathcal{NT}}}
\newcommand\bcalD{\boldsymbol{\mathcal{D}}}
\newcommand\bcalR{\boldsymbol{\mathcal{R}}}
\newcommand\bcalV{\boldsymbol{\mathcal{V}}}
\newcommand\bcalU{\boldsymbol{\mathcal{U}}}
\newcommand\bcalY{\boldsymbol{\mathcal{Y}}}
\newcommand\bcalW{\boldsymbol{\mathcal{W}}}
\newcommand\bnabla{\boldsymbol{\nabla}}
\newcommand{\catorce}{ 14}
\newcommand{\catorceB}{\color{red} 14}
\newcommand{\CC}{ \mathbb C }
\newcommand{\ccc}{\mathfrak{c}}
\newcommand{\ch}{{\rm char}}
\newcommand{\cincuentacinco}{55}
\newcommand{\cincuentacincoR}{\color{red}55}
\newcommand{\Comp}{{\mathcal Comp}_n}
\newcommand{\cuarentacuatro}{ 44}
\newcommand{\cupdot}{\mathbin{\mathaccent\cdot\cup}}
\newcommand{\Cs}{\overleftarrow{C}}
\newcommand{\Cd}{\overrightarrow{C}}
\newcommand{\Der}{{\rm Der}}
\newcommand{\diez}{10}
\newcommand{\dieciseis}{16}
\newcommand{\diezR}{\color{red}10}
\newcommand{\doce}{12}
\newcommand{\doceB}{\color{blue} 12}
\newcommand{\doceR}{\color{red} 12}
\newcommand{\E}{ {\mathcal E}_n(q)}
\newcommand{\e}{\mathfrak{e}}
\newcommand{\EE}{ {\mathcal E}_n}
\newcommand\een{\mathbf{e}}
\newcommand\es{\mathbbm{s}}
\newcommand{\End}{{\rm End}}
\newcommand\et{\mathbbm{t}}
\newcommand\eu{\mathbbm{u}}
\newcommand\ev{\mathbbm{v}}
\newcommand{\Exp}{ {\rm \bf exp} }
\newcommand{\F}{ { \mathbb F}}
\newcommand{\FF}{ {\mathcal F}_n}
\newcommand{\g}{  \mathfrak{g}}
\newcommand{\gl}{\mathfrak{gl}}
\newcommand{\h}{{h}}
\newcommand{\HH}{ \mathcal{H}_n}
\newcommand{\HHO}{ \mathcal{H}^{\OO}_n}
\newcommand{\HHK}{ \mathcal{H}^{\K}_n}
\newcommand{\HHtwo}{ \mathcal{H}_2}
\newcommand{\HHKtwo}{ \mathcal{H}^{\K}_2}
\newcommand{\HHOtwo}{ \mathcal{H}^{\OO}_2}
\newcommand{\HHKOne}{ \mathcal{H}^{1,\K}_n}
\newcommand{\II}{I_{\mathbf{e}}}
\newcommand{\IIa}{I_{e}}
\newcommand{\id}{{\rm id}}
\newcommand{\ind}{{\rm ind}}
\newcommand{\inv}{{\rm inv}}
\newcommand{\JM}{ \mathcal L }
\newcommand{\K}{\mathcal{K}}
\newcommand{\kk}{\mathcal{K}}
\newcommand{\kkk}{k-1}
\newcommand{\kappan}{\boldsymbol{\kappa}}
\newcommand{\Li}{\mathcal{L}}
\newcommand{\LL}{\mathbb{L}}
\newcommand{\m}{\mathfrak{m}}
\newcommand{\MC}{{ {\rm Comp}}_{l,n}}
\newcommand{\MCm}{{ {\rm Comp}}_{l,m}}
\newcommand{\MP}{{\rm Par }_{l,n}}
\newcommand{\mfra}{{\mathfrak{a}}}
\newcommand{\mfrb}{{\mathfrak{b}}}
\newcommand{\mfrc}{{\mathfrak{c}}}
\newcommand{\mfrt}{{\mathfrak{t}}}
\newcommand{\mfrs}{{\mathfrak{s}}}
\newcommand{\mfru}{{\mathfrak{u}}}
\newcommand{\mfrv}{{\mathfrak{v}}}
\newcommand{\mfrw}{{\mathfrak{w}}}
\newcommand{\mfrx}{{\mathfrak{x}}}
\newcommand{\mfry}{{\mathfrak{y}}}
\newcommand{\mfrz}{{\mathfrak{z}}}
\newcommand{\N}{ { \mathbb N}}
\newcommand{\No}{ { \mathbb N}_0}
\newcommand{\nstd}{{\rm NStd}}
\newcommand{\NB}{\mathbb{N}\mathcal{B}_k}
\newcommand{\once}{11}
\newcommand{\onceB}{\color{blue}11}
\newcommand{\OnePar}{{ \rm Par}^1_{l,m}}
\newcommand{\OneParn}{{ \rm Par}^1_{l,n}}
\newcommand{\OO}{\mathcal{O}}
\newcommand{\op}{\otimes}
\newcommand{\Par}{{\rm Par}_{l,m}}
\newcommand{\q}{\hat{q}}
\newcommand{\quince}{15}
\newcommand{\quinceB}{\color{red} 15}
\newcommand{\R}{ \mathcal{R}_m}
\newcommand{\Rad}{{\rm Rad}}
\newcommand{\res}{ \textrm{res} }
\newcommand\rrn{\mathbf{r}}
\newcommand{\s}{\mathfrak{s}}
\newcommand{\seq}{{\rm seq}_n}
\newcommand{\shape}{\textsf{shape}}
\newcommand{\Si}{\mathfrak{S}}
\newcommand{\sign}{-}
\newcommand{\sixteenB}{\color{red} 16}
\newcommand{\Snake}{{\rm Snake}}
\newcommand{\spa}{{\rm span}}
\newcommand{\std}{{\rm Std}}
\newcommand{\T}{  \mathfrak{t}}
\newcommand{\tab}{{\rm Tab}}
\newcommand{\tr}{{\rm \textbf{tr}}}
\newcommand{\tR}{ \overline{\R}}
\newcommand{\Tr}{{\rm Tr}}
\newcommand{\trece}{13}
\newcommand{\treintatres}{33}
\newcommand{\treintatresR}{\color{red}33}
\newcommand{\trunc}{ {\B} (\blambda ) }
\newcommand{\truncPrime}{ {\mathbb B}_n^{\prime} (\blambda ) }
\newcommand{\truncSing}{ {\mathbb B}_{\bar n} (\overline{\blambda} ) }
\newcommand{\TT}{{\mathfrak T}}
\newcommand{\TTc}{  \mathcal{T}}
\newcommand{\U}{\mathfrak{u}}
\newcommand{\UU}{\mathbb{U}}
\newcommand{\UUc}{\mathcal{U}}
\newcommand{\ulu}{\underline{u}}
\newcommand{\ulv}{\underline{v}}
\newcommand{\ulw}{\underline{w}}
\newcommand{\ulx}{\underline{x}}
\newcommand{\uly}{\underline{y}}
\newcommand{\ulz}{\underline{z}}
\newcommand{\V}{\mathfrak{v}}
\newcommand{\veintidos}{22}
\newcommand{\VV}{\mathbb{V}}
\newcommand{\Y}{\mathcal{Y}}
\newcommand{\Yy}{\mathcal{Y}_{r,n}}
\newcommand{\yvc}{\Yvcentermath1}
\newcommand{\YY}{\mathcal{Y}_{r,n}(q)}
\newcommand{\Z}{\mathbb{Z}}
\begin{document}
  \Yvcentermath1
\title{Nil graded algebras associated to triangular matrices and their applications to Soergel Calculus.}
\author{\sc  Diego Lobos Maturana }

\maketitle


\begin{abstract}
  We introduce and study a category of algebras strongly connected with the structure of the Gelfand-Tsetlin subalgebras of the endomorphism algebras of Bott-Samelson bimodules. We develop a series of techniques that allow us to obtain optimal presentations for the many Gelfand-Tsetlin subalgebras appearing in the context of the Diagrammatic Soergel Category.
\end{abstract}
keywords: Diagrammatic Soergel Category, Gelfand-Tsetlin subalgebras.
\tableofcontents

\pagenumbering{arabic}

\section{Introduction}
\subsection{Motivation}
The \emph{Hecke category} associated to a Coxeter group, is currently one of the most relevant object of study in Representation Theory, due to its multiple applications to Geometric and Modular representation theory among other branches of study (see \cite{bow-cox-hazi}, \cite{EliasKhov10}, \cite{EW}, \cite{Esp-Pl},  \cite{JenWill17}, \cite{Lib10}, \cite{Lib15}, \cite{LibLightLeaves}, \cite{LiPl}, \cite{Lobos-Plaza-Ryom-Hansen}, \cite{Plaza17}, \cite{RicheWill}, \cite{Steen2} for example).

Thanks to the work of Elias and Williamson \cite{EW}, we have the \emph{Diagrammatic Soergel category} $\bcalD,$ a diagrammatic approach to the algebraic study of the Hecke category. Libedinsky in \cite{LibLightLeaves} has built the well known \emph{Double light leaves} basis for the many morphism spaces of the Soergel Category and later Elias and Williamson have proved in \cite{EW}, that those bases are cellular in the sense of \cite{Westbury}. In particular, due to the work of Ryom-Hansen \cite{Steen2}, we know that the many endomorphisms algebras coming from $\bcalD$ are cellular algebras endowed with a set of Jucys-Murphy elements in the sense of \cite{Mat-So}. In this article we study the structure of the \emph{Gelfand-Tsetlin} subalgebras in the category $\bcalD,$ generated by the Jucys-Murphy elements obtained by Ryom-Hansen.

 Jucys-Murphy elements, when they exist, are fundamental in the study of the representation theory of cellular algebras (see \cite{Murphy1},\cite{Murphy2}, \cite{Murphy3}, \cite{MatCoef},\cite{Mat-So}, \cite{OkunVershik2}, for example). In this case we are particularly interested in the search of optimal presentations in terms of minimal set of generators and relations for the subalgebras generated by those elements.

In the beginning of our study,  we observed that the many Gelfand-Tsetlin subalgebras in the context of the diagrammatic Soergel category, have basically the same structure independently of the type where we were immersed. Although they are not isomorphic in general, there were certain patterns appearing in first presentations that we obtained, that invited us to provide a more general approach. Thereby we define a category $\bcalNT,$ whose objects are the \emph{Nil graded algebras associated to triangular matrices}, the main object of study in this article, and whose morphisms are the \emph{preserving degree} homomorphisms between them. The category $\bcalNT$ is by itself an interesting object of study due to its intriguing structure, far beyond the applications that we develop in this work.

In this article we provide a first series of generalities about the category $\bcalNT,$ that we expect to develop more completely in future works. Although we left many questions open in the study of the category $\bcalNT,$ for our original purposes, we obtain sufficient and very effective techniques that allow us to obtain the desired optimal presentations, explicit bases and dimension formula for all the Gelfand-Tsetlin subalgebras coming from Soergel calculus.

This article is, in some way, a generalization of the work of Espinoza and Plaza \cite{Esp-Pl}, where the authors obtained optimal presentations for the Gelfand-Tsetlin subalgebras, in type $\widetilde{A}_1,$ named by the authors as the \emph{Dot-line algebras}.  In our case we extend their results working simultaneously in any type. In particular, we provide an explicit treatment to the case of type $\widetilde{A}_m,(m\geq2).$


The outline of this paper is as follows: In section \ref{sec-Nil-alg-associated-to-T} we define the concept of \emph{nil graded algebra associated to a triangular matrix} (definition \ref{def-nilalgebra-associated-to-T}), and we develop a series of preliminary results concerning with their structure. In particular in Theorem \ref{theo-all-AT-is-strongly} we obtain complete control on their vector space structure, that is, we build bases and we obtain explicit dimension formulas. Later in subsection \ref{sec-the-category-NT} we introduce and study the category $\bcalNT,$ we define their objects and morphisms and we provide a series of small results on its structure.

In section \ref{sec-Applic-Soergel} we develop the connection  of the category $\bcalNT$ with Soergel Calculus. We first briefly recall some of the elements involved in the work of Elias and Williamson \cite{EW}, focusing basically in those that will be used in this article (subsection \ref{ssec-Soergel-category}). After that we explicitly related each object $\ulw$ of the category $\bcalD,$ with an object $\calA_{\ulw}$ in the category $\bcalNT$ (subsection \ref{ssec-nil-alg-and-expressions}). In subsection \ref{ssec-the-Jailbreak-alg} we define the Gelfand-Tsetlin subalgebra $\bcalJ(\ulw)$ for any endomorphism algebra $\textrm{End}_{\bcalD}(\ulw)$ in the category $\bcalD.$  We show that they are members of our category $\bcalNT.$ Moreover we provide an isomorphism between the algebras $\bcalJ(\ulw)$ and $\calA_{\ulw}$ (see Theorem \ref{theo-Ju-isomorphic-to-ATu}). Finally in subsection \ref{ssec-explicit-for-type-A} we turn our attention to the case of type $\widetilde{A}_m.$ We show more explicitly how the technology coming from the category $\bcalNT$ can be used to optimally obtain presentations for the Gelfand-Tsetlin subalgebras corresponding to this type.

A particular result of this article is Theorem \ref{theo-isomorph-for-vertical-case}, where we obtain an explicit isomorphism between certain Gelfand-Tsetlin subalgebras in the category $\bcalD$ with Gelfand-Tsetlin subalgebras in the context of Generalized Blob algebras, that were studied by the author in \cite{Lobos1}. This isomorphism is coherent with the isomorphism of categories described in the \emph{Blob vs Soergel} conjecture of Libedinsky and Plaza \cite{LiPl} (recently proved in \cite{bow-cox-hazi}) where the authors settle isomorphisms between endomorphims algebras of Bott-Samelson bimodules and certain idempotent truncations of generalized blob algebras. Although we cannot conclude from this conjecture that the Gelfand-Tsetlin subalgebras will be necessarily isomorphic, since they are relative to the cellular bases we are working with. In this case, under the particular hypothesis of theorem \ref{theo-isomorph-for-vertical-case} we obtain this connection.

Summarizing, the main results of this article are:
\begin{enumerate}
  \item Theorem \ref{theo-all-AT-is-strongly}, where we provide fundamental information on the structure of the algebras $\calA(T)$ of our category $\bcalNT.$
  \item Theorem \ref{theo-Ju-isomorphic-to-ATu}, where we describe how we can calculate explicitly, the desired  presentations for the many Gelfand-Tsetlin subalgebras, coming from the Soergel Calculus in any type.
  \item Theorem \ref{lemma-commuting-move-for-type-Atilde}, where we show that the Gelfand-Tsetlin subalgebras $\bcalJ(\ulu)$ in type $\widetilde{A},$ are invariant under commuting moves. This theorem is the base for our explicit algorithm that allow us to calculate the desired presentations for this case.
  \item Theorem \ref{theo-isomorph-for-vertical-case}, in this case, a strong demonstration on how our results can be used in a deeper study of the strong connection between Generalized blob algebras and Soergel Calculus.
\end{enumerate}


\subsection{Notations and terminology}\label{ssecnotations}
The following is a list of notation and terminology that will be used along this article:
\begin{itemize}
\item For the whole article we consider $\F$ as a field of characteristic different from $2.$  We denote  $\F^{\times}=\F-\{0\}.$  Any vector space (resp. algebra) mentioned in this article will be considered over the field $\F.$ Any matrix mentioned in this article will be considered with coefficient in the field $\F.$ Any linear transformation will be understood as a $\F-$linear transformation.

\item We denote by $\mathbb{Z}$ the set of integer numbers and $\mathbb{Z}_{\geq 0}$ the set of nonnegative integer numbers. If $m\in\mathbb{Z},$ we denote by $m\mathbb{Z}$ the set of all integer numbers multiples of $m.$ We also denote by $I_m=\mathbb{Z}/m\mathbb{Z},$ the ring of classes modulo $m.$ The class modulo $m$ of a number $a\in \mathbb{Z}$  will be denoted simply as $a.$

\item In this article we use the word \emph{algebra} to refer to any unital, associative algebra (over $\F$). The identity element of an algebra $\calA$ will be denoted by $1_\calA$ or simply by $1$ if there is no possible confusion.
    If $\calA,\calB$ are algebras, with $\calB\subset \calA,$ we say that $\calB$ is a \emph{subalgebra} of $\calA$ if they share the same identity element $(1_\calB=1_\calA).$ In other case we only say that $\calB$ is an \emph{algebra contained} in $\calA.$ Any homomorphism of algebras $\bgamma:\calA\rightarrow \calB$ will be assumed satisfying the condition $\bgamma(1_\calA)=1_\calB.$ Therefore the set $\bgamma(\calA)$ will be always a subalgebra of $\calB.$
    We call \emph{the vector space structure} of an algebra $\calA,$ the vector space obtained from $\calA$ by forgetting the product operation. When it would be necessary, we denote the vector space structure of $\calA$ as $\calA_{v}.$

\item  A \emph{preserving degree} homomorphism of graded algebras $\bgamma:\calA\rightarrow \calB$  is an homomorphism of algebras between two graded algebras $\calA,\calB,$ such that $\deg(\bgamma(h))=\deg(h),$ for any homogenous element $h$ of $\calA.$  A \emph{Nil graded algebra} is a graded algebra $\calA,$ where every homogeneous element $h$ such that $\deg(h)\neq 0$ is a nilpotent element.

\item The \emph{Gelfand-Tsetlin} subalgebra of a cellular algebra endowed by a set of Jucys-Murphy elements is the subalgebra that those elements generate (this terminology was introduced by Okounkov and Vershik in \cite{OkunVershik1},\cite{OkunVershik2}).

\item Let $(W,S)$ a Coxeter system, relative to a set of indexes $J$ and Coxeter matrix $\mathbf{m}$.  Let $W^{\ast}$ the free monoid  generated by $S=\{s_a:a\in J\}.$ The elements $\underline{w}\in W^{\ast}$ are \emph{free expressions} (or simply \emph{expressions}) on the elements on $S.$ Each expression $\ulw\in W$ defines an element in the group $W$, that we will denote by $w$ (sometimes if there is no possibly confusion, we will denote both the expression in $W^{\ast}$ and the element in $W$ with the same symbol). The length $l^{\ast}(\underline{w})$ of the expression $\underline{w}\in W^{\ast},$ is the number of factors in $S$ that compose it. The length $l(w)$ of an element $w\in W$ is given by  $l(w)=\min\{l^{\ast}(\underline{w}):\ulw\},$ where $\ulw\in W^{\ast}$ runs over all the possible expressions of $w.$  When $\l^{\ast}(\underline{w})=l(w),$ we say that $\underline{w}$ is a \emph{reduced expression} for $w.$ In particular if $(W,S)$ is the Coxeter system of type $\tilde{A}_{m-1},$ we consider as a set of indexes $J=I_m=\mathbb{Z}/m\mathbb{Z}$ (more details in \cite{Bjorner-Brenti}).
\end{itemize}

\subsection{Acknowledgment}
\medskip

 It is a pleasure to thank to David Plaza and Steen Ryom-Hansen for many useful comments that helped to improve this article. I especially want to thank go to Camilo Martinez Estay, bachelor student in Mathematics at PUCV, for his excellent work as Assistant Researcher in the first part of this project.

 This work was supported by \emph{Proyecto DI Emergente PUCV 2020}, 039.475/2020. Pontificia Universidad Cat\'olica de Valpara\'iso, Chile.
\medskip


\medskip

\section{Nil graded algebras associated to triangular matrices}\label{sec-Nil-alg-associated-to-T}

\subsection{Nil graded algebras weakly associated to triangular matrices}\label{ssec-nil-graded-alg-weakly-associated-to-T}
In this section we define a family of algebras, whose presentation is given by a strictly lower triangular matrix

\begin{definition}\label{def-nilalgebra-associated-to-T}
  Given a strictly lower triangular matrix $T=[t_{ij}]_{n\times n},$ we define the \emph{Abstract nil graded algebra associated to} $T$ as the commutative unital $\F-$algebra $\calA(T)$ given by generators $X_1,\dots,X_n$ subject to the following relations:
  \begin{equation}\label{rel-nilalgebra-associated-to-T}
    X_{1}^{2}=0,\quad \textrm{and}\quad X_{i}^{2}=\sum_{j<i}{t_{ij}}X_{j}X_i,\quad (i=2,\dots,n).
  \end{equation}
  We provide a structure of positively graded algebra for $\calA(T),$ by defining
  \begin{equation}\label{rel-def-degree-AT}
   \deg(X_i)=2,\quad (i=1,\dots,n).
  \end{equation}
\end{definition}



\begin{definition}\label{def-nilalgebra-weakly-associated-to-T}
  Let $T=[t_{ij}]_{n\times n}$ be a lower triangular matrix. A commutative graded $\F-$algebra $\calA$ generated by a set of elements $\calX=\{X_1,\dots,X_n\},$ is \emph{weakly associated to} $T$ (relative to the set $\calX$), if its generators $X_1,\dots,X_n$ satisfy relations \ref{rel-nilalgebra-associated-to-T} and \ref{rel-def-degree-AT}.
\end{definition}

Note that the difference between definitions \ref{def-nilalgebra-associated-to-T} and  \ref{def-nilalgebra-weakly-associated-to-T} is that in definition \ref{def-nilalgebra-associated-to-T} the abstract algebra $\calA(T)$ is defined by a presentation, while in definition \ref{def-nilalgebra-weakly-associated-to-T} we consider any commutative graded algebra $\calA$ whose generators satisfy relations \ref{rel-nilalgebra-associated-to-T}. In particular the abstract algebra $\calA(T)$ of definition \ref{def-nilalgebra-associated-to-T} is also an algebra weakly associated to $T.$


The following lemmas and corresponding corollaries are a generalization of technics developed in \cite{Esp-Pl} and \cite{Lobos1}:

\begin{lemma}\label{lemma-bounded-dimension-for-AT}
  Let $\calA$ an algebra weakly associated to a strictly lower triangular matrix $T=[t_{ij}]_{n\times n},$ relative to a set of generators $\calX=\{X_1,\dots,X_n\}.$
   Then the vector space structure of $\calA,$ is generated by the set:
  $$W_{0}=\{X_1^{\alpha_1}X_2^{\alpha_2}\cdots X_{n}^{\alpha_n}:\alpha_{j}\in\{0,1\}\}$$
  (note that $1=X_1^{0}X_2^{0}\cdots X_{n}^{0}\in W_{0}$).
\end{lemma}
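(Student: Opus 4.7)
The plan is to proceed by a double induction: an outer induction on $n$ (the number of generators) together with an inner induction on the power of the ``last'' generator $X_n$. The base case $n=1$ is immediate, since the unique defining relation $X_1^2=0$ forces the span of $\calA$ to be contained in $\{1,X_1\}$, which equals $W_0$ in this case.

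For the inductive step, I would first consider the subalgebra $\calB\subset \calA$ generated by $X_1,\dots,X_{n-1}$. Let $T'=[t_{ij}]_{(n-1)\times (n-1)}$ be the principal submatrix of $T$ obtained by deleting the last row and column. Since the defining relations of $\calA$ involving only $X_1,\dots,X_{n-1}$ are exactly $X_1^2=0$ together with $X_i^2=\sum_{j<i}t_{ij}X_j X_i$ for $i=2,\dots,n-1$, and since these coincide with the relations that characterise an algebra weakly associated to $T'$, the subalgebra $\calB$ (which is automatically commutative and graded by restriction) is weakly associated to $T'$ relative to $\{X_1,\dots,X_{n-1}\}$. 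By the outer inductive hypothesis, the vector space underlying $\calB$ is spanned by
$$W_0':=\{X_1^{\alpha_1}X_2^{\alpha_2}\cdots X_{n-1}^{\alpha_{n-1}}:\alpha_j\in\{0,1\}\}.$$

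Next, by commutativity, every element of $\calA$ is a polynomial in $X_n$ with coefficients in $\calB$, that is, a finite sum $\sum_{k\geq 0} b_k X_n^k$ with $b_k\in \calB$. Hence it suffices to show that each monomial $b\, X_n^k$ with $b\in\calB$ and $k\geq 0$ lies in the span of $W_0=W_0'\cup W_0'X_n$. The inner induction runs on $k$. If $k\in\{0,1\}$, then writing $b$ in terms of $W_0'$ gives $b\,X_n^k$ as an $\F$-linear combination of elements of $W_0$, which settles these cases. If $k\geq 2$, I would apply the defining relation at index $n$ to compute
$$b\,X_n^k\;=\;b\,X_n^{k-2}\!\cdot\! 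X_n^{2}\;=\;\sum_{j<n}t_{nj}(b\,X_j)X_n^{k-1},$$
and since each $bX_j$ again lies in $\calB$, the right-hand side is a sum of monomials of the form $b'X_n^{k-1}$ with $b'\in\calB$. The inner induction then reduces $k$ to at most $1$ and the argument closes.

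I do not expect a real obstacle: the only point that needs care is the verification that $\calB$ is actually weakly associated to $T'$, since the definition requires working with a specified generating set and a specified matrix. Once this is set up correctly, the rest is a formal rewriting argument. Note that the proof uses only commutativity and the defining relations (2.1.1); the grading data (2.1.2) is not needed for this spanning statement, although it will become relevant when one later upgrades this spanning set to a basis.
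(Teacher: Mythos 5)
Your proof is correct, and it takes a genuinely different route from the paper's. The paper argues directly on the set of all monomials: for a monomial $P=X_1^{\alpha_1}\cdots X_n^{\alpha_n}$ with some $\alpha_i\geq 2$ it locates the smallest such index $i=m(P)$, applies the relation for $X_i^2$ there (pushing the excess towards $X_1$, where it is killed by $X_1^2=0$), and runs an induction on a partial order built from $m(P)$ and the exponent at that position. You instead do an outer induction on $n$, observing that the subalgebra $\calB=\langle X_1,\dots,X_{n-1}\rangle$ is weakly associated to the principal submatrix $T'$ --- the same restriction device the paper only introduces later, as $T|_{i-1}$ in the proof of Lemma \ref{lemma-nilpotency-of-Xi} --- together with an inner induction that lowers only the exponent of $X_n$. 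What your organization buys is a terminating quantity that visibly and strictly decreases at every step ($k\mapsto k-1$). In the paper's argument one must check that every monomial $R_j$ produced by the rewriting is smaller in the order $\prec$, and as literally stated this can fail: for $P=X_2^2X_3^2$ the term $R_1=X_1X_2X_3^2$ has $m(R_1)=3>2=m(P)$, so $R_1\not\prec P$. The paper's induction is easily repaired (for instance by inducting on $\sum_i i\,\alpha_i$, which does strictly decrease under the rewriting since mass moves from position $i$ to a position $j<i$), but your scheme avoids the issue altogether; conversely, the paper's version is a single self-contained induction that never needs the intermediate claim that $\calB$ is weakly associated to a smaller matrix. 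Your closing remark that only commutativity and relations (\ref{rel-nilalgebra-associated-to-T}) are used, and not the grading, is also correct.
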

\begin{proof}
  It is clear that as a vector space, $\calA$ is generated by the set:
  $W=\{X_1^{\alpha_1}X_2^{\alpha_2}\cdots X_{n}^{\alpha_n}:\alpha_{j}\in\mathbb{Z}_{\geq 0}\}.$
  Let $W_1=\{X_1^{\alpha_1}X_2^{\alpha_2}\cdots X_{n}^{\alpha_n}:\alpha_{i}\geq 2\quad \textrm{for some}\quad i=1,\dots,n \},$  then $W=W_0\cup W_1,$ therefore, it is enough to prove that each element of $W_1$ can be written as a linear combination of elements in $W_0.$

  For each element $P=X_1^{\alpha_1}X_2^{\alpha_2}\cdots X_{n}^{\alpha_n}$ in $W_1$ we define the number $m(P)=\min\{i:\alpha_i\geq 2\}.$ If $m(P)=j,$ we write $h(P)=\alpha_j.$
  We define a partial order on $W_1$ as follows:
  \begin{equation*}
    P_1\prec P_2\quad\textrm{if and only if}\quad (m(P_1)<m(P_2))\vee (m(P_1)=m(P_2)\wedge h(P_1)<h(P_2)).
  \end{equation*}
  We use induction on $\prec$ to prove that each element of $W_1$ can be written as a linear combination of elements in $W_0.$

  If $m(P)=1,$ then by relation \ref{rel-nilalgebra-associated-to-T} we have $P=0$ and the assertion is trivial.
  If $m(P)=i>1$ then by relation \ref{rel-nilalgebra-associated-to-T} we have
  $P=X_1^{\alpha_1}X_2^{\alpha_2}\cdots X_{i-1}^{\alpha_{i-1}}HX_i^{\alpha_i-2}\cdots X_{n}^{\alpha_n},$ where $H=\sum_{j<i}{t_{ij}X_jX_i}.$ Therefore $P=\sum_{j<i}t_{ij}R_j,$
  where $ R_j=X_1^{\alpha_1}X_2^{\alpha_2}\cdots X_{j}^{\alpha_{j}+1}\cdots X_i^{\alpha_i-1}\cdots X_{n}^{\alpha_n}.$
  Then either $R_j\in W_0$ or $R_j\in W_1$ and satisfies $R_j\prec P.$ In the second case by induction we have that $R_j$ can be written as a linear combination of elements in $W_0.$ Both cases imply that $P$ can be written as a linear combination of elements in $W_0$ as desired.
\end{proof}

\begin{corollary}\label{coro-bounded-degree-for-AT}
  Let $\calA$ an algebra weakly associated to a strictly lower triangular matrix $T=[t_{ij}]_{n\times n},$ relative to the set $\calX=\{X_1,\dots,X_n\}.$ Then
  \begin{enumerate}
    \item $ \dim(\calA)\leq 2^{n}.$
    \item If $h$ be an homogeneous element in $\calA,$ then $\deg(h)\leq 2n.$ In particular, if $\deg(h)=2n,$ then
  \begin{equation*}
    h=rX_1X_2\cdots X_n,\quad\textrm{for some}\quad r\in \F.
  \end{equation*}
  \end{enumerate}

\end{corollary}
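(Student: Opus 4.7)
The plan is to deduce both parts directly from Lemma \ref{lemma-bounded-dimension-for-AT}, exploiting the fact that the spanning set $W_0$ is finite and consists of homogeneous elements of controlled degree.

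First, I would observe that the set
\[
W_0 = \{X_1^{\alpha_1} X_2^{\alpha_2} \cdots X_n^{\alpha_n} : \alpha_j \in \{0,1\}\}
\]
has cardinality exactly $2^n$, since each exponent $\alpha_j$ independently takes one of two values. By Lemma \ref{lemma-bounded-dimension-for-AT}, $W_0$ spans the vector space structure of $\calA$, so $\dim(\calA) \leq |W_0| = 2^n$, giving part (1).

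For part (2), the key remark is that every element $P = X_1^{\alpha_1} \cdots X_n^{\alpha_n} \in W_0$ is homogeneous: using $\deg(X_i) = 2$ and the fact that $\calA$ is a graded algebra, we have $\deg(P) = 2(\alpha_1 + \cdots + \alpha_n)$. Since each $\alpha_j \in \{0,1\}$, this degree ranges over $\{0, 2, 4, \ldots, 2n\}$. Now, any homogeneous element $h$ of degree $d$ must lie in the homogeneous component $\calA_d$, which is spanned by the elements of $W_0$ of degree $d$ (one obtains this by writing $h$ as a linear combination of elements of $W_0$ via Lemma \ref{lemma-bounded-dimension-for-AT}, then projecting onto the degree $d$ component, which eliminates the summands of different degree). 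Since $W_0$ contains no elements of degree greater than $2n$, we conclude $\deg(h) \leq 2n$.

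For the final assertion, if $\deg(h) = 2n$, then $h$ lies in the span of the elements of $W_0$ of degree $2n$. The only such element is obtained by taking $\alpha_1 = \alpha_2 = \cdots = \alpha_n = 1$, namely $X_1 X_2 \cdots X_n$. Hence $h = r X_1 X_2 \cdots X_n$ for some $r \in \F$.

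There is no real obstacle here: the whole argument is essentially a count of monomials in $W_0$ together with the observation that they are homogeneous with degrees determined by the multi-index. The only minor subtlety is the projection onto a homogeneous component, which requires that the rewriting produced by Lemma \ref{lemma-bounded-dimension-for-AT} respects the grading — but this is automatic since relations \eqref{rel-nilalgebra-associated-to-T} are homogeneous of degree $4$ (both sides equal to a degree-$4$ element in $\calA$), so every substitution in the proof of Lemma \ref{lemma-bounded-dimension-for-AT} preserves degree.
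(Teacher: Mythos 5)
Your proof is correct and follows the same route as the paper, which simply states that the corollary follows directly from Lemma \ref{lemma-bounded-dimension-for-AT}; you have merely made explicit the counting of $W_0$ and the projection onto homogeneous components that the paper leaves implicit. The only superfluous point is your closing remark about the relations being homogeneous of degree $4$: since $\calA$ is by hypothesis a graded algebra with each $X_i$ homogeneous of degree $2$, the monomials in $W_0$ are automatically homogeneous and the projection argument goes through without appealing to how the rewriting in the lemma interacts with the grading.
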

\begin{proof}
  It follows directly from lemma \ref{lemma-bounded-dimension-for-AT}.
\end{proof}

\begin{lemma}\label{lemma-monomial-anihilation-v1}
 Let $\calA$ an algebra, weakly associated to a strictly lower triangular matrix $T=[t_{ij}]_{n\times n},$ relative to the set $\calX=\{X_1,\dots,X_n\}.$ For each $i=1,\dots,n$ we have $X_1X_2\cdots X_{i-1}X_i^2=0.$
\end{lemma}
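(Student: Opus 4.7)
The plan is to prove the identity by induction on $i$, using the defining relations \ref{rel-nilalgebra-associated-to-T} together with the commutativity of $\calA$.

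For the base case $i=1$, the claim reduces to $X_1^2=0$, which is the first relation in \ref{rel-nilalgebra-associated-to-T}. For the inductive step, assume that $X_1 X_2 \cdots X_{j-1} X_j^{2}=0$ for every $j<i$. Applying the relation $X_i^{2}=\sum_{j<i} t_{ij}X_j X_i$, I would write
\begin{equation*}
X_1 X_2 \cdots X_{i-1} X_i^{2} \;=\; \sum_{j<i} t_{ij}\, X_1 X_2 \cdots X_{i-1} X_j X_i .
\end{equation*}
For each term in this sum the factor $X_j$ already appears among $X_1,\dots,X_{i-1}$. Using commutativity, I would reorder each summand so that the two copies of $X_j$ sit next to each other, obtaining
\begin{equation*}
X_1 X_2 \cdots X_{i-1} X_j X_i \;=\; \bigl(X_1 \cdots X_{j-1} X_j^{2}\bigr)\, X_{j+1}\cdots X_{i-1}X_i ,
\end{equation*}
and then invoke the inductive hypothesis to conclude that this factor, hence the whole term, vanishes. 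Summing over $j<i$ gives the desired identity.

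The argument is essentially mechanical once the induction is set up correctly; the only subtle point is the bookkeeping, namely recognizing that the reduction produced by \ref{rel-nilalgebra-associated-to-T} lands precisely inside the range where the inductive hypothesis is available (we only need the statement for indices $j<i$). No further obstruction arises, since the algebra is commutative and the inductive hypothesis applies uniformly to every index below $i$.
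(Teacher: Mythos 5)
Your proof is correct and follows exactly the paper's argument: expand $X_i^2$ via relation \ref{rel-nilalgebra-associated-to-T}, use commutativity to collect the two copies of $X_j$ into the prefix $X_1\cdots X_{j-1}X_j^2$, and apply the (strong) inductive hypothesis for indices $j<i$. The paper's proof is just a terser version of the same induction.
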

\begin{proof}
  If $i=1$ the assertion follows directly from relation \ref{rel-nilalgebra-associated-to-T}. If $i>1$ then by relation \ref{rel-nilalgebra-associated-to-T} we have $ X_1X_2\cdots X_{i-1}X_i^2=\sum_{j<i}{t_{ij}}X_1X_2\cdots X_{j}^2\cdots X_i,$ therefore we obtain the desired result by induction on $i$ .
\end{proof}

\begin{lemma}\label{lemma-nilpotency-of-Xi}
  Let $\calA$ an algebra, weakly associated to a lower triangular matrix $T=[t_{ij}]_{n\times n},$ relative to the set $\calX=\{X_1,\dots,X_n\}.$ Then $X_i^{i+1}=0,$ for each $i=1,\dots n.$
\end{lemma}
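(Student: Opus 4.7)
The plan is to reduce the claim to a direct consequence of Corollary \ref{coro-bounded-degree-for-AT} applied to a smaller algebra. The case $i=1$ is immediate from $X_1^2 = 0$. For $i \geq 2$, I will introduce the auxiliary element
\[
Y_i \;:=\; \sum_{j<i} t_{ij}\,X_j,
\]
which is homogeneous of degree $2$ and lies inside the subalgebra $\calA'$ of $\calA$ generated by $\{X_1,\dots,X_{i-1}\}$. Rewriting relation (\ref{rel-nilalgebra-associated-to-T}) as $X_i^2 = Y_i X_i$ and using commutativity, a straightforward induction on the exponent gives $X_i^{k} = Y_i^{\,k-1} X_i$ for every $k\geq 1$; in particular
\[
X_i^{i+1} \;=\; Y_i^{\,i}\, X_i.
\]

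Next, I observe that $\calA'$ is itself a commutative graded algebra weakly associated to the strictly lower triangular principal submatrix $T' = [t_{jk}]_{1\leq j,k\leq i-1}$, in the sense of Definition \ref{def-nilalgebra-weakly-associated-to-T}, because $X_1,\dots,X_{i-1}$ satisfy precisely the relations (\ref{rel-nilalgebra-associated-to-T}) and (\ref{rel-def-degree-AT}) corresponding to $T'$. Consequently Corollary \ref{coro-bounded-degree-for-AT}(2) applies to $\calA'$ and bounds the degree of every homogeneous element of $\calA'$ by $2(i-1)$.

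Finally, since $Y_i \in \calA'$ is homogeneous of degree $2$, the power $Y_i^{\,i}$ is a homogeneous element of $\calA'$ of degree $2i$. Because $2i > 2(i-1)$, this degree exceeds the bound given by Corollary \ref{coro-bounded-degree-for-AT}(2), forcing $Y_i^{\,i} = 0$ in $\calA'$, and hence in $\calA$. Therefore
\[
X_i^{i+1} \;=\; Y_i^{\,i}\,X_i \;=\; 0,
\]
as desired.

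The only delicate step is the verification that the subalgebra $\calA'$ genuinely fits the hypotheses of Corollary \ref{coro-bounded-degree-for-AT}, i.e.\ that it is weakly associated to the principal submatrix $T'$; this is immediate from the definitions but deserves to be stated explicitly. Once that identification is in place, the rest of the argument is a short calculation plus a degree count, so I do not anticipate any real obstacle.
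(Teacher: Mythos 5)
Your proof is correct and follows essentially the same route as the paper's: both factor $X_i^{i+1}$ as a power of $Y_i=\sum_{j<i}t_{ij}X_j$ times a power of $X_i$, observe that the subalgebra generated by $X_1,\dots,X_{i-1}$ is weakly associated to the principal submatrix, and then invoke the degree bound of Corollary \ref{coro-bounded-degree-for-AT}. The only difference is that the paper stops at $Y_i^{i-1}X_i^2$, identifies the degree-$2(i-1)$ element $Y_i^{i-1}$ with $rX_1\cdots X_{i-1}$, and finishes via Lemma \ref{lemma-monomial-anihilation-v1}, whereas you absorb one more factor to get $Y_i^{i}$ of degree $2i>2(i-1)$, which the degree bound kills outright --- a slight streamlining that makes Lemma \ref{lemma-monomial-anihilation-v1} unnecessary at this step.
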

\begin{proof}
  If $i=1$ the assertion follows directly from relation \ref{rel-nilalgebra-associated-to-T}. If $i>1$ then by relation \ref{rel-nilalgebra-associated-to-T} we have $X_i^{i+1}=\left(\sum_{j<i}{t_{ij}X_j}\right)^{i-1}X_i^{2}.$ The element $h=\left(\sum_{j<i}{t_{ij}X_j}\right)^{i-1}$ is an homogeneous element of degree $2(i-1)$ in the subalgebra $\calA(T|_{i-1})$ of $\calA(T).$   Then by corollary \ref{coro-bounded-degree-for-AT} we have $h=rX_1\cdots X_{i-1}$ and therefore by lemma \ref{lemma-monomial-anihilation-v1} we have $X_{i}^{i+1}=0$ as desired.
  (Here $T|_{i-1}=[t_{ij}]$ is the submatrix of $T$ obtained by erasing the rows and columns of $T$ from $i$ to $n.$)
\end{proof}


\subsection{Nil graded algebras strongly associated to triangular matrices}\label{ssec-nil-alg-strongly-associated-T}
\begin{definition}\label{def-strongly-AT}
  Given a strictly lower triangular matrix $T=[t_{ij}]_{n\times n},$ we say that an algebra $\calA,$ weakly associated to $T$  is \emph{strongly associated} to $T$, if $\dim(\calA)=2^n.$
\end{definition}

Note that by definition, any algebra $\calA$ strongly associated to a lower triangular matrix $T$, is also weakly associated to $T.$ If the algebra $\calA$ is \emph{strongly associated} to $T$, then the set $W_{0}$ of lemma \ref{lemma-bounded-dimension-for-AT} is a basis for $\calA.$ In that case, we called the set $W_{0},$ the \emph{monomial basis} of the algebra $\calA.$

The following lemma provides a criterion to identify if the algebra $\calA$ is strongly associated to $T$ or not. It is also a generalization of arguments used in \cite{Esp-Pl} and \cite{Lobos1}:

\begin{lemma}\label{lemma-monomial-basis}
   Let $\calA$ an algebra, weakly associated to a strictly lower triangular matrix $T=[t_{ij}]_{n\times n},$ relative to the set $\calX=\{X_1,\dots,X_n\}.$
  The algebra $\calA$ is strongly associated to $T$ if and only if the element $\calP=X_1X_2\cdots X_n$ is different from zero.
\end{lemma}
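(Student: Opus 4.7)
My plan is to prove the two implications separately. The forward direction is immediate: if $\calA$ is strongly associated to $T$, then $\dim(\calA)=2^n$ together with Lemma \ref{lemma-bounded-dimension-for-AT} forces the spanning set $W_0$ (which has $2^n$ elements) to be a basis; in particular $\calP \in W_0$ is nonzero.

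For the converse, assume $\calP\neq 0$. Since $W_0$ already spans $\calA$ and has exactly $2^n$ elements (Lemma \ref{lemma-bounded-dimension-for-AT}), it suffices to prove its linear independence, which by the grading reduces to independence of each degree piece $W_0^{(k)}=\{X^\alpha:\alpha\in\{0,1\}^n,\ |\alpha|=k\}$. My approach is a pairing argument into the one-dimensional top-degree piece: by Corollary \ref{coro-bounded-degree-for-AT} combined with $\calP\neq 0$, we have $\calA_{2n}=\F\cdot\calP$, so for any $\alpha,\gamma\in\{0,1\}^n$ with $|\alpha|+|\gamma|=n$ the degree-$2n$ product satisfies $X^\alpha X^\gamma=M_{\alpha,\gamma}\calP$ for a unique scalar $M_{\alpha,\gamma}\in\F$. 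Ordering $\{0,1\}^n$ by reverse lexicographic order (compare from the largest index backwards), I will show that the matrix with rows indexed by $\alpha$ and columns by $\bar\gamma$ (where $\bar\gamma_i=1-\gamma_i$) is lower-triangular with $1$'s on the diagonal, hence invertible. Once established, if $\sum_{|\alpha|=k} c_\alpha X^\alpha=0$, then multiplying by each $X^\gamma$ with $|\gamma|=n-k$ and using $\calP\neq 0$ produces the invertible linear system $M^T c=0$, forcing $c=0$.

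The heart of the argument, and the main obstacle, is establishing the triangularity of $M$. The diagonal entries $M_{\alpha,\bar\alpha}=1$ are immediate from $X^\alpha X^{\bar\alpha}=X_1X_2\cdots X_n=\calP$. For the off-diagonal vanishing, the condition $\alpha\prec\bar\gamma$ translates to $i^*:=\max\{i:\alpha_i+\gamma_i\neq 1\}$ satisfying $\alpha_{i^*}=\gamma_{i^*}=0$, so that the initial exponent vector $e=\alpha+\gamma$ of $X^\alpha X^\gamma$ has $e_{i^*}=0$ and $e_i=1$ for $i>i^*$. I will reduce $X^\alpha X^\gamma$ to the monomial basis by iteratively applying $X_{i_0}^2=\sum_{j<i_0}t_{i_0,j}X_jX_{i_0}$ at the maximal index $i_0$ with $e_{i_0}\geq 2$; each such step strictly decreases the exponent vector in reverse-lex (the strict decrease occurs at position $i_0$), guaranteeing termination since reverse-lex is well-founded on the finite set of exponent vectors with total sum $n$. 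Crucially, every reduction step affects only indices $j\leq i_0<i^*$, so the exponent at $i^*$ remains $0$ along every branch of the reduction tree. Each branch must terminate either with all exponents equal to $1$ (producing $\calP$) or with a zero monomial (via some reduction eventually encountering $X_1^2=0$); the first alternative is ruled out here, because it would require $e_{i^*}=1$. Hence every branch contributes $0$ to the coefficient of $\calP$, so $M_{\alpha,\gamma}=0$ and the triangularity is proved.
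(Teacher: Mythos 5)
Your proof is correct. Both your argument and the paper's rest on the same basic strategy: multiply by the complementary monomial $\calF(\calQ)$ to land in the (at most) one-dimensional top-degree component $\F\cdot\calP$, and show the resulting pairing is triangular. But the mechanism you use to kill the off-diagonal terms is genuinely different. The paper orders $W_0$ by degree and then by the \emph{minimal} occupied index $f(\calQ)$, and the cross terms vanish because the offending product contains a full prefix $X_1\cdots X_{i-1}X_i^2$, which is zero by Lemma \ref{lemma-monomial-anihilation-v1}; the degree bound of Corollary \ref{coro-bounded-degree-for-AT} handles the terms of higher degree. You instead split by graded components first, key your order on the \emph{maximal} differing index $i^*$, and prove the vanishing $X^\alpha X^\gamma=0$ directly by a rewriting-and-termination argument: since each relation $X_{i}^2=\sum_{j<i}t_{ij}X_jX_i$ only moves exponent to strictly smaller indices, the ``hole'' at $i^*$ can never be filled, and the reverse-lex decrease forces every branch to die at $X_1^2=0$. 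In effect you re-derive, from scratch, a variant of Lemma \ref{lemma-monomial-anihilation-v1} adapted to suffixes rather than prefixes. What the paper's route buys is brevity (the annihilation lemma is already available and the single combined order avoids the degree-by-degree bookkeeping); what your route buys is a cleaner linear-algebra packaging (an explicitly unitriangular Gram-type matrix pairing each graded piece with its complementary piece) and a self-contained termination argument that makes the ``relations only push mass to lower indices'' principle explicit. Both are complete proofs.
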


\begin{proof}
  If $\calA$ is strongly associated to $T$ then the set $W_0$ is a basis for $\calA.$ In particular the element  $\calP=X_1X_2\cdots X_n\in W_0$ is different from zero.

  On the other hand, if $\calP=X_1X_2\cdots X_n\neq 0,$ then each element $\calQ=X_1^{\alpha_1}X_2^{\alpha_2}\cdots X_n^{\alpha_n}\in W_0$ is different from zero. For each of them we define the element $\calF(\calQ)=X_1^{\overline{\alpha}_1}X_2^{\overline{\alpha}_2}\cdots X_n^{\overline{\alpha}_n}\in W_0,$ where $\overline{\alpha}_j=1-\alpha_j.$ Note that $\calQ\calF(\calQ)=\calP\neq 0.$

  We also define for each $\calQ=X_1^{\alpha_1}X_2^{\alpha_2}\cdots X_n^{\alpha_n}\in W_0$ the number $$f(\calQ)=\left\{\begin{array}{cc}
                                                        \min\{j:\alpha_j=1\} & \quad\textrm{if}\quad \calQ\neq 1 \\
                                                        \quad & \quad \\
                                                        0 & \quad\textrm{if}\quad \calQ=1.
                                                      \end{array}\right.$$

  For two elements $\calQ, \calR\in W_0$ we define the following order relation:
  \begin{equation*}
    \calQ<\calR\quad\textrm{if and only if}\quad \left((\deg(\calQ)<\deg(\calR))\vee (\deg(\calQ)=\deg(\calR)\wedge f(\calQ)>f(\calR))\right).
  \end{equation*}
Considering the equation:
\begin{equation}\label{eq1-lemma-monomial-basis}
  \sum_{\calQ\in W_0}{C_{\calQ}\calQ}=0,
\end{equation}
we multiply by $\calP,$ then  by lemma \ref{lemma-monomial-anihilation-v1} we obtain $C_{1}\calP=0,$ and therefore $C_{1}=0.$

Now we take an arbitrary $\calQ\neq 1$ in $W_0$ and assume that we already have proved that $C_{\calR}=0$ for all $\calR<\calQ$ in $W_0.$ Then multiplying by $\calF(\calQ)$ in equation \ref{eq1-lemma-monomial-basis}, then we obtain

\begin{equation}\label{eq2-lemma-monomial-basis}
  C_{\calQ}\calP+\sum_{\calR>\calQ}{C_{\calR}\calR\calF(\calQ)}=0
\end{equation}
but note that $\deg(\calR\calF(\calQ))>2n$ if $\deg(\calR)>\deg(\calQ)$ and then $\calR\calF(\calQ)=0.$ (corollary \ref{coro-bounded-degree-for-AT}). On the other hand if $\deg(\calR)=\deg(\calQ),$ but $f(\calR)<f(\calQ)$ then $\calR\calF(\calQ)=0$ as a consequence of lemma \ref{lemma-monomial-anihilation-v1}. Therefore equation \ref{eq2-lemma-monomial-basis} reduces to $C_{\calQ}\calP=0, $ and then $C_{\calQ}=0.$

By induction on $<,$ we conclude that the set $W_0$ is linearly independent, and by lemma \ref{lemma-bounded-dimension-for-AT}, we conclude that $W_0$ is in fact a basis for $\calA.$ In particular, we conclude that $\calA$ is strongly associated to $T$.
\end{proof}

\begin{theorem}\label{theo-all-AT-is-strongly}
  For any strictly lower triangular matrix $T=[t_{ij}]_{n\times n},$ the abstract nil graded algebra associated $\calA(T)$ (definition \ref{def-nilalgebra-associated-to-T}), is strongly associated to $T$.
\end{theorem}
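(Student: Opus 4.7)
The plan is to use induction on $n$ combined with Lemma \ref{lemma-monomial-basis}: it suffices to show that $\calP = X_1 X_2 \cdots X_n$ is nonzero in $\calA(T)$. The base case $n=1$ is immediate since $\calA(T) = \F[X_1]/(X_1^2)$ has basis $\{1, X_1\}$.

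For the inductive step, let $T|_{n-1}$ denote the upper-left $(n-1)\times(n-1)$ block of $T$. By the inductive hypothesis $\calA(T|_{n-1})$ is strongly associated to $T|_{n-1}$, hence has dimension $2^{n-1}$ and contains $X_1 X_2 \cdots X_{n-1}$ as a nonzero element. Setting $L := \sum_{j<n} t_{nj} X_j \in \calA(T|_{n-1})$, the last defining relation of $\calA(T)$ reads $X_n(X_n - L) = 0$. The central step is to establish the algebra isomorphism
\[
\calA(T) \;\cong\; \calA(T|_{n-1})[Y]\,/\,(Y^2 - LY),
\]
given by $Y \mapsto X_n$. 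Both sides share the same universal property among commutative $\F$-algebras, namely they classify tuples consisting of the data defining $\calA(T|_{n-1})$ together with one extra element satisfying $Y^2 = LY$. Since $Y^2 - LY$ is monic of degree $2$ in $Y$ over the commutative ring $\calA(T|_{n-1})$, the standard monic-division argument shows that the right-hand side is a free $\calA(T|_{n-1})$-module with basis $\{1, Y\}$.

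Consequently $\dim_\F \calA(T) = 2 \cdot 2^{n-1} = 2^n$, and the element $\calP = (X_1 \cdots X_{n-1}) \cdot X_n$ is nonzero because $X_1 \cdots X_{n-1} \ne 0$ in $\calA(T|_{n-1})$ and $X_n$ is part of a free basis over this subalgebra. The one subtle point is the injectivity of the natural surjection $\calA(T|_{n-1})[Y]/(Y^2-LY) \twoheadrightarrow \calA(T)$ underlying the claimed isomorphism; this is ensured by Lemma \ref{lemma-bounded-dimension-for-AT}, which bounds $\dim \calA(T)$ from above by $2^n$ and therefore forces any surjection from an algebra of dimension exactly $2^n$ to be an isomorphism. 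Thus the hypothesis of Lemma \ref{lemma-monomial-basis} is met and $\calA(T)$ is strongly associated to $T$.
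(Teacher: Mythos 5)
Your proof is correct, but it takes a genuinely different route from the paper. The paper proves the theorem by explicitly constructing a $2^n$-dimensional commutative algebra $\bcalU$ (spanned by symbols $H(J)$ indexed by subsets $J\subseteq\{1,\dots,n\}$, with a product encoding relations \ref{rel-nilalgebra-associated-to-T}), exhibiting an epimorphism $\calA(T)\twoheadrightarrow\bcalU$, and combining this lower bound on $\dim\calA(T)$ with the upper bound from Lemma \ref{lemma-bounded-dimension-for-AT}. You instead induct on $n$, identifying $\calA(T)\cong\calA(T|_{n-1})[Y]/(Y^2-LY)$ via the standard two-stage-quotient (or universal property) argument, and then invoking monic division to see that this is a free $\calA(T|_{n-1})$-module with basis $\{1,Y\}$. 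Your route is arguably more economical: it computes $\dim\calA(T)=2^n$ exactly in one stroke, needs neither the upper bound of Lemma \ref{lemma-bounded-dimension-for-AT} nor Lemma \ref{lemma-monomial-basis} to conclude (the dimension count alone meets Definition \ref{def-strongly-AT}), and avoids verifying that the ad hoc product on $\bcalU$ is well defined and associative. What the paper's construction buys in exchange is an explicit faithful model of $\calA(T)$ inside a space of matrices, which makes the monomial basis concretely visible.

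One caveat: your closing justification of injectivity is stated backwards. A surjection from a $2^n$-dimensional algebra onto $\calA(T)$ combined with the \emph{upper} bound $\dim\calA(T)\leq 2^n$ does not force injectivity; one would need a lower bound on $\dim\calA(T)$, which is precisely what is being proved. Fortunately this step is not actually needed: the map $\calA(T)\to\calA(T|_{n-1})[Y]/(Y^2-LY)$ sending $X_i\mapsto X_i$ ($i<n$) and $X_n\mapsto Y$ is well defined because the images satisfy relations \ref{rel-nilalgebra-associated-to-T}, and it is mutually inverse on generators with the surjection you describe, so the isomorphism holds outright. With that remark repaired or simply deleted, the argument is complete.
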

\begin{proof}
  Let $\calM=\calM_{2n\times 2n}(\F)$ the set of all the matrices of size $2n\times 2n$ and coefficient in $\F.$  Let $\bcalV$ the set of all functions $f:\calM\rightarrow\F.$ For any matrix $H\in\calM$ we define the function $f_{H}:\calM\rightarrow\F$ given by
  \begin{equation*}
    f_{H}(X)=\left\{\begin{array}{cc}
                      1 &\quad\textrm{if}\quad X=H \\
                      \quad & \quad \\
                      0 & \quad \textrm{otherwise}
                    \end{array}\right.
  \end{equation*}
  Then the set $\{f_{H}:H\in \calM\}$ is a basis of $\bcalV.$ If we identify each matrix $H\in\calM$ with its corresponding function $f_{H}\in \bcalV,$ then we can see $\bcalV$ as the vector space of all the formal linear combinations between the elements of $\calM.$

  For each $j=1,\dots,n$ we define the matrix $H(j)=[h_{rc}]\in \calM$ as the matrix whose coefficients are given by the rule:

  \begin{equation*}
    h_{rc}=\left\{\begin{array}{cc}
             1 & \quad\textrm{if}\quad (r,c)=(2j,2j-1) \\
             \quad & \quad \\
             0 & \quad\textrm{otherwise}
           \end{array}\right.
  \end{equation*}
  More generally, for any nonempty subset $J\subset\{1,\dots,n\}$ we define the matrix $H(J)=[h_{rc}]\in \calM$ as the matrix whose coefficients are given by the rule:

  \begin{equation*}
    h_{rc}=\left\{\begin{array}{cc}
             1 & \quad\textrm{if}\quad (r,c)=(2j,2j-1)\quad \textrm{for some}\quad j\in J \\
             \quad & \quad \\
             0 & \quad\textrm{otherwise}
           \end{array}\right.
  \end{equation*}

  We also define the matrix $H(\emptyset)\in \calM$ as the zero matrix $0\in \calM$ (note that this is not the zero element $\overrightarrow{0}$ of the space $\bcalV$).

   Let $\bcalU$ the subspace of $\bcalV$ generated by all the $H(J)$ defined as above. Is not difficult to see that $\dim(\bcalU)=2^n.$

   Let's define a product in $\bcalU:$

    For any pair of disjoint subsets $J_1,J_2\subset \{1,\dots,n\},$ we define

   \begin{equation}\label{eq8-theo-all-AT-is-strongly}
     H(J_1)H(J_2)=H(J_1 \cup J_2).
   \end{equation}

   In particular for each pair $i,j\in\{1,\dots,n\}$ with $i\neq j$ we have

   \begin{equation}\label{eq4-theo-all-AT-is-strongly}
     H(i)H(j)=H(\{i,j\}).
   \end{equation}

   For each $i\in\{1,\dots,n\}$ we define
   \begin{equation}\label{eq5-theo-all-AT-is-strongly}
     H(i)^2=H(i)H(i)=\sum_{j<i}t_{ij}H(\{j,i\}).
   \end{equation}

   We extend by linearity and associativity the product to any element in $\bcalU.$ In particular, we have:

   For each pair of subsets $J_1,J_2\subset \{1,\dots,n\}:$
   \begin{equation}\label{eq6-theo-all-AT-is-strongly}
     H(J_1)H(J_2)=H(J_1 \Delta J_2)H(J_1\cap J_2)^2.
   \end{equation}
   where $J_1\Delta J_2=(J_1\cup J_2) -( J_1\cap J_2),$ is the symmetric difference of the sets $J_1,J_2,$ and where

   \begin{equation}\label{eq7-theo-all-AT-is-strongly}
     H(I)^2=\prod_{i\in I}{H(i)^2}.
   \end{equation}

   for any subset $I\subset\{1,\dots,n\}.$

   It is clear from equations \ref{eq8-theo-all-AT-is-strongly}, \ref{eq4-theo-all-AT-is-strongly}, \ref{eq5-theo-all-AT-is-strongly}, \ref{eq6-theo-all-AT-is-strongly} and \ref{eq7-theo-all-AT-is-strongly} that $H(J_1)H(J_2)\in \bcalU$ for any pair of subsets $J_1,J_2\subset \{1,\dots,n\}.$

    By equation \ref{eq6-theo-all-AT-is-strongly} we conclude that $H(J_1)H(J_2)=H(J_2)H(J_1),$ then $\bcalU$ is commutative under this product (by definition $\bcalU$ is associative under this product).

     By  equation \ref{eq8-theo-all-AT-is-strongly}, we have that $H(\emptyset)H(J)=H(J)=H(J)H(\emptyset),$ then $H(\emptyset)$ is the identity element for this product.

     Therefore $\bcalU$ is a commutative algebra under the aforementioned product and by definition $\dim(\bcalU)=2^n.$

     On the other hand,  equations \ref{eq8-theo-all-AT-is-strongly}, \ref{eq4-theo-all-AT-is-strongly} and \ref{eq5-theo-all-AT-is-strongly}, implies that there is an epimorphism of algebras, from $\calA(T)$ onto $\bcalU,$ given by the assignment:
     \begin{equation*}
       \begin{array}{cc}
         1\mapsto & H(\emptyset) \\
         \quad& \quad \\
         X_i\mapsto & H(i)
       \end{array}
     \end{equation*}

     By lemma \ref{lemma-bounded-dimension-for-AT} we conclude that $\dim(\calA(T))=2^n,$ (an therefore the algebras $\calA(T)$ and $\bcalU$ are isomorphic), then $\calA(T)$ is strongly associated to $T.$
\end{proof}

Note that we can provide a grading to the algebra $\bcalU$ of the proof of theorem \ref{theo-all-AT-is-strongly}, as follows:
\begin{equation*}
  \deg(H(J))=2\cdot|J|
\end{equation*}
where $|J|$ is the cardinal number of the subset $J\subseteq\{1,\dots,n\}.$


\subsection{The category $\bcalNGAT.$}\label{sec-the-category-NT}
In this section we define a category $\bcalNGAT.$ For the rest of the article, we consider $T=[t_{ij}]_{n\times n}$ and $S=[s_{ij}]_{m\times m}$ as strictly lower triangular matrices.
 We also write $\mathcal{X}=\{X_i:i=1,\dots, n\}$and $\mathcal{Y}=\{Y_i:i=1,\dots,m\}$ the corresponding set of generators of the algebras $\calA(T)$ and $\calA(S)$ respectively (as in definition \ref{def-nilalgebra-associated-to-T}).

 The objects of the category $\bcalNGAT$ are the nil graded algebras $\calA(T)$ strongly associated to triangular matrices. We also consider as an object of $\bcalNGAT$ the field $\F$ itself, considered as a $1-$dimensional $\F-$graded algebra, graduated by $\deg(z)=0$ for all $z\in \F.$

The morphisms (resp. isomorphisms) of $\bcalNGAT$ are all the \emph{preserving degree} algebra homomorphisms (resp. isomorphisms) $\calA(T)\rightarrow\calA(S),$ ($T,S$ strictly lower triangular matrices).

Note that the only possible morphism $\F\rightarrow \calA(S),$ is the \emph{natural injection} of the field $\F$ into $\calA(S).$ Analogously the only possible morphism $\calA(T)\rightarrow\F$ is given by the assignment: $1\mapsto 1,\quad X_j\mapsto 0.$

More generally, given two objects $\calA(T),\calA(S)\neq \F$ of $\bcalNGAT,$ we always have a \emph{trivial} morphism  $\calA(T)\rightarrow\calA(S),$ given by the assignment, $1\mapsto 1$ and $X_j\mapsto 0.$

In general the existence of nontrivial morphism between two arbitrary objects  $\calA(T),\calA(S)$ of $\bcalNGAT$ depends on some complicated conditions:

  Note that any  morphism $\bgamma:\calA(T)\rightarrow\calA(S)$ is totally determined by the images of the set of generators: $X_r\mapsto\bgamma(X_r).$ Conversely given an assignment $X_r\mapsto\bgamma(X_r)$ that satisfies \begin{equation}\label{eq1-lemma-linear-transf-associated-to-morphism}
    (\bgamma(X_r))^2=\sum_{j<n}t_{rj}\bgamma(X_j)\bgamma(X_r),\quad r=1,\dots,n
  \end{equation}
  then, it defines a morphism $\bgamma:\calA(T)\rightarrow\calA(S)$ in the category $\bcalNT.$

 If $\bgamma:\calA(T)\rightarrow\calA(S)$ is a morphism in the category $\bcalNGAT.$ Then $\bgamma$ defines a matrix $\Gamma=[\gamma_{jr}]_{m\times n}$ by the equation:
   \begin{equation}\label{eq-matrix-Gamma-associated-to-morphism}
     \bgamma(X_r)=\sum_{j=1}^{m}\gamma_{jr}Y_j,\quad\textrm{for each}\quad r=1,\dots,n.
   \end{equation}

 The matrix $\Gamma$ will be called the \emph{matrix associated to the  morphism} $\bgamma.$

Conditions for the existence of isomorphisms between two objects of $\bcalNT$ are more complicated, but if we assume that we already have a morphism $\bgamma:\calA(T)\rightarrow\calA(S),$ then we can easily check if it is a isomorphism or not. In lemma \ref{lemma-conditions1-for-isomorphisms} and  corollary \ref{coro-conditions3-for-isomorphisms}, we consider $T=[t_{ij}]_{n\times n}$ and $S=[s_{ij}]_{n\times n}$ as strictly lower triangular matrices of the same size $n\times n.$


\begin{lemma}\label{lemma-conditions1-for-isomorphisms}
  Let $\bgamma:\calA(T)\rightarrow\calA(S)$ a morphism in the category $\bcalNGAT.$ Then $\bgamma$ is an  isomorphism in the category $\bcalNGAT$ if and only if $\bgamma(X_1)\bgamma(X_2)\cdots\bgamma(X_n)\neq 0.$
\end{lemma}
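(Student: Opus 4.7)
The plan is to handle the two directions of the equivalence separately, relying on the monomial basis theory developed in Theorem \ref{theo-all-AT-is-strongly} and Lemma \ref{lemma-monomial-basis}, plus the fact that when $T$ and $S$ have the same size $n\times n$, the algebras $\calA(T)$ and $\calA(S)$ have the same dimension $2^n$.

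For the forward direction, I would argue contrapositively, but it is essentially immediate: since $T$ is $n\times n$ strictly lower triangular, Theorem \ref{theo-all-AT-is-strongly} tells us that $\calA(T)$ is strongly associated to $T$, so by Lemma \ref{lemma-monomial-basis} the element $\calP=X_1X_2\cdots X_n$ is nonzero in $\calA(T)$. If $\bgamma$ is an isomorphism it is in particular injective, hence $\bgamma(X_1)\bgamma(X_2)\cdots\bgamma(X_n)=\bgamma(\calP)\neq 0$.

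For the reverse direction, consider the image subalgebra $\bgamma(\calA(T))\subseteq \calA(S)$. Because $\bgamma$ is an algebra homomorphism that preserves degrees and the generators $X_1,\ldots,X_n$ satisfy \eqref{rel-nilalgebra-associated-to-T} and \eqref{rel-def-degree-AT}, the elements $\bgamma(X_1),\ldots,\bgamma(X_n)$ satisfy those same relations in $\bgamma(\calA(T))$. Thus $\bgamma(\calA(T))$ is weakly associated to $T$ relative to the set $\{\bgamma(X_1),\dots,\bgamma(X_n)\}$. Under the hypothesis $\bgamma(X_1)\cdots\bgamma(X_n)\neq 0$, Lemma \ref{lemma-monomial-basis} upgrades this: $\bgamma(\calA(T))$ is in fact strongly associated to $T$, so $\dim\bgamma(\calA(T))=2^n$. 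Applying Theorem \ref{theo-all-AT-is-strongly} to $S$ we also have $\dim \calA(S)=2^n$, so $\bgamma(\calA(T))=\calA(S)$ and $\bgamma$ is surjective. Since $\dim\calA(T)=\dim\calA(S)=2^n$, surjectivity forces bijectivity, and so $\bgamma$ is an algebra isomorphism.

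Finally, to promote this to an isomorphism in $\bcalNGAT$, I must check that $\bgamma^{-1}$ is again degree-preserving. This is a short formal argument: given a homogeneous $y\in \calA(S)$ of degree $k$, write $\bgamma^{-1}(y)=\sum_i x_i$ as a sum of homogeneous components $x_i$ of degree $i$; applying $\bgamma$ and using that it preserves degrees, we get $y=\sum_i \bgamma(x_i)$ with $\bgamma(x_i)$ homogeneous of degree $i$, and uniqueness of the homogeneous decomposition in $\calA(S)$ together with injectivity of $\bgamma$ force $x_i=0$ for $i\neq k$. The only potential obstacle I anticipate is making sure the hypotheses of Lemma \ref{lemma-monomial-basis} apply to the non-abstractly-defined subalgebra $\bgamma(\calA(T))$ rather than to $\calA(T)$ itself, but this is exactly the scope in which that lemma was stated (any commutative graded algebra weakly associated to $T$), so no extra work is required.
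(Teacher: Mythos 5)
Your proof is correct and follows essentially the same route as the paper: view the image $\bgamma(\calA(T))$ as an algebra weakly associated to $T$, apply Lemma \ref{lemma-monomial-basis} to detect when it is strongly associated, and conclude by comparing dimensions with $\dim\calA(S)=2^n$. The only difference is that you spell out both directions and the degree-preservation of $\bgamma^{-1}$ explicitly, which the paper leaves implicit.
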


\begin{proof}
 The image $\bgamma(\calA(T))\subset \calA(S)$ is a commutative subalgebra weakly associated to the matrix $T.$ In particular $\dim(\bgamma(\calA(T))\leq \dim(\calA(S))=2^n.$ Then $\bgamma$ is an isomorphism if and only if $\dim(\bgamma(\calA(T)))=2^n,$ but this is equivalent to say that $\bgamma(\calA(T))$ is strongly associated to $T.$ The conclusion follows from lemma \ref{lemma-monomial-basis}.
\end{proof}

\begin{corollary}\label{coro-conditions3-for-isomorphisms}
  Let $\bgamma:\calA(T)\rightarrow\calA(S)$ be a morphism in the category $\bcalNGAT$ and let $\Gamma$ be the matrix associated to $\bgamma$. Then $\bgamma$ is an  isomorphism in the category $\bcalNGAT$ if and only if the matrix $\Gamma$ is invertible.
\end{corollary}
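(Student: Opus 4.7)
The plan is to prove the corollary without directly evaluating the top monomial $\bgamma(X_1)\cdots\bgamma(X_n)$ demanded by Lemma~\ref{lemma-conditions1-for-isomorphisms}; instead I will reduce the question to surjectivity on the degree-$2$ homogeneous component, where the behaviour of $\bgamma$ is literally governed by the matrix $\Gamma$.

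First I would observe that, by Theorem~\ref{theo-all-AT-is-strongly}, both $\calA(T)$ and $\calA(S)$ are strongly associated to their defining matrices, so that $\dim_{\F}\calA(T)=2^{n}=\dim_{\F}\calA(S)$. Hence $\bgamma$ is an isomorphism if and only if it is surjective.

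Next, since $\bgamma$ preserves degree, it restricts to a linear map $\bgamma_{(2)}\colon \calA(T)_{2}\to\calA(S)_{2}$. By the monomial basis of Lemma~\ref{lemma-monomial-basis}, both of these spaces are $n$-dimensional, with respective bases $\{X_{1},\dots,X_{n}\}$ and $\{Y_{1},\dots,Y_{n}\}$, and in these bases the matrix of $\bgamma_{(2)}$ is precisely $\Gamma$ by equation~\eqref{eq-matrix-Gamma-associated-to-morphism}. Therefore $\bgamma_{(2)}$ is surjective if and only if $\Gamma$ is invertible.

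The remaining and most delicate step will be to show that $\bgamma$ itself is surjective if and only if $\bgamma_{(2)}$ is. The forward direction is immediate from degree preservation. For the reverse direction, if $\bgamma_{(2)}$ is surjective then $Y_{1},\dots,Y_{n}\in\bgamma(\calA(T))$, and since these elements algebra-generate $\calA(S)$ while $\bgamma(\calA(T))$ is a subalgebra of $\calA(S)$, we conclude $\bgamma(\calA(T))=\calA(S)$. The only conceptual subtlety lies in exactly this passage from degree-$2$ surjectivity to overall surjectivity, which relies on algebra-generation of $\calA(S)$ by the $Y_{j}$'s rather than merely on vector-space spanning; once this is in hand the three equivalences chain together to give the statement.
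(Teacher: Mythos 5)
Your proof is correct and follows essentially the same route as the paper, whose one-line argument likewise rests on identifying $\Gamma$ with the matrix of $\bgamma$ restricted to the span of the generators (equivalently, the degree-$2$ component). You merely make explicit the steps the paper leaves implicit — the dimension count $\dim\calA(T)=2^{n}=\dim\calA(S)$ from Theorem \ref{theo-all-AT-is-strongly}, and the passage from surjectivity in degree $2$ to surjectivity of $\bgamma$ via algebra-generation by the $Y_{j}$ — all of which are sound.
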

\begin{proof}
  If $\bgamma:\calA(T)\rightarrow\calA(S)$ is a morphism, then equation \ref{eq-matrix-Gamma-associated-to-morphism} in particular defines a linear application between the vector spaces generated by $\{X_j:j=1,\dots,n\}$ and $\{Y_j:j=1,\dots,n\},$ respectively. Therefore $\bgamma$ is an isomorphism  if and only if the matrix $\Gamma$ is invertible.
\end{proof}


\begin{example}\label{ex-AT-AS-isomorphic}
  Consider the matrices:
  \begin{equation*}
    T=\left[\begin{matrix}
              0 & 0 & 0 & 0 \\
              0 & 0 & 0 & 0 \\
              -1 & -1 & 0 & 0 \\
              -1 & +1 & -1 & 0
            \end{matrix}\right],\quad
            S=\left[\begin{matrix}
               0& 0 & 0 & 0 \\
              -1& 0 & 0 & 0 \\
              -1&-1 & 0 & 0 \\
               0& +1& -1& 0
            \end{matrix}\right]
  \end{equation*}
  Let
  \begin{equation*}
    \mathcal{X}=\{X_1,X_2,X_3,X_4\},\quad \mathcal{Y}=\{Y_1,Y_2,Y_3,Y_4\}
  \end{equation*}
  the corresponding set of generators of the algebras $\calA(T)$ and $\calA(S)$ respectively (as in definition \ref{def-nilalgebra-associated-to-T}). Then the corresponding presentations are:
  \begin{equation*}
   \calA(T):\left\{\begin{array}{cc}
             X_1^2= 0&\quad \\
             \quad&\quad \\
             X_2^2= 0&\quad \\
              \quad&\quad\\
             X_3^2=-X_1X_3-X_2X_3&\quad \\
              \quad&\quad \\
             X_4^2= -X_1X_4+X_2X_4-X_3X_4&\quad
           \end{array}\right.
  \quad \textrm{and}\quad  \calA(S):
   \left\{\begin{array}{cc}
             Y_1^2= 0&\quad \\
             \quad&\quad \\
             Y_2^2= -Y_1Y_2&\quad \\
              \quad&\quad\\
             Y_3^2=-Y_1Y_3-Y_2Y_3&\quad \\
              \quad&\quad \\
             Y_4^2= \quad+Y_2Y_4-Y_3Y_4&\quad
           \end{array}\right.
  \end{equation*}
  We claim that $\calA(T)$ and $\calA(S)$ are isomorphic objects of the category $\bcalNGAT.$  In fact, if we consider the linear transformation $\bgamma$ induced by the assignment:
  \begin{equation*}
    \left\{\begin{array}{c}
             X_1\mapsto \bgamma(X_1)=Y_1 \\
             \quad\\
             X_2\mapsto \bgamma(X_2)=Y_1+2Y_2 \\
             \quad \\
             X_3\mapsto \bgamma(X_3)=2Y_3 \\
             \quad \\
             X_4\mapsto \bgamma(X_4)=2Y_4
           \end{array}\right.
  \end{equation*}

  we can check that:
  \begin{enumerate}
    \item
     \begin{equation*}
    \bgamma(X_1)^2=Y_1^2=0,
  \end{equation*}
  \item
  \begin{equation*}
    \bgamma(X_2)^2=(Y_1+2Y_2)^2=Y_1^2+4Y_1Y_2+4Y_2^2=+4Y_1Y_2-4Y_1Y_2=0,
  \end{equation*}
  \item
  \begin{equation*}
    \bgamma(X_3)^2=(2Y_3)^2=-4Y_1Y_3-4Y_2Y_3=-Y_1(2Y_3)-(Y_1+2Y_2)(2Y_3),
  \end{equation*}
  then
  \begin{equation*}
    \bgamma(X_3)^2=-\bgamma(X_1)\bgamma(X_3)-\bgamma(X_2)\bgamma(X_3),
  \end{equation*}
  \item
     \begin{equation*}
    \bgamma(X_4)^2=(2Y_4)^2=4Y_2Y_4-4Y_3Y_4=-(Y_1)(2Y_4)+(Y_1+2Y_2)(2Y_4)-(2Y_3)(2Y_4),
  \end{equation*}
  then
  \begin{equation*}
    \bgamma(X_4)^2=-\bgamma(X_1)\bgamma(X_4)+\bgamma(X_2)\bgamma(X_4)-\bgamma(X_3)\bgamma(X_4).
  \end{equation*}
  \end{enumerate}

  Therefore $\bgamma$ defines a morphism $\calA(T)\rightarrow \calA(S).$
 The matrix associated to $\bgamma$ is
  \begin{equation*}
    \Gamma=\left[ \begin{matrix}
             1 & 1 & 0 & 0 \\
             0 & 2 & 0 & 0 \\
             0 & 0 & 2 & 0 \\
             0 & 0 & 0 & 2
           \end{matrix}\right]
  \end{equation*}
  then $\det(\Gamma)=2^3\neq 0$ so $\Gamma$ is invertible, and $\bgamma$ is an isomorphism.
\end{example}




\subsection{The $\nabla-$products in $\bcalNGAT$}\label{ssec-nabla-prod-NT}

Given two strictly lower triangular matrices $T=[t_{ij}]_{n\times n}$ and $S=[s_{ij}]_{m\times m}$ and an arbitrary matrix $C=[c_{ij}]_{m\times n}$ we define the matrix $T\nabla_C S=[a_{ij}]_{(n+m)\times(n+m)}$ as follows:

\begin{equation*}
  a_{ij}=\left\{\begin{array}{cc}
                  t_{ij} & \quad\textrm{if}\quad 1\leq i,j \leq n \\
                  \quad & \quad \\
                  c_{kj} & \quad\textrm{if}\quad i=n+k\quad \textrm{and}\quad 1\leq j\leq n \\
                  \quad & \quad \\
                  s_{kc} & \quad \textrm{if}\quad i=n+k\quad \textrm{and}\quad j=n+c\\
                  \quad & \quad \\
                  0 & \quad \textrm{otherwise}
                \end{array}\right.
\end{equation*}

that is

\begin{equation*}
  T \nabla_C S=\left[\begin{matrix}
                       T & 0 \\
                       C & S
                     \end{matrix}\right].
\end{equation*}

By definition $T\nabla_C S$ is a strictly lower triangular matrix, then there is a nil graded algebra strongly associated to $T\nabla_C S$:

\begin{definition}\label{def-nabla-product-for-algebras}
  Given two strictly lower triangular matrices $T=[t_{ij}]_{n\times n}, S=[s_{ij}]_{m\times m}$ and a matrix $C=[c_{ij}]_{m\times n},$ we define the $\nabla-$product of $\calA(T)$ and $\calA(S)$ relative to $C,$  denoted by $\calA(T)\nabla_C \calA(S),$  as the nil graded algebra strongly associated to the matrix $T\nabla_C S.$ That is:
  \begin{equation*}
    \calA(T)\nabla_C \calA(S)=\calA(T\nabla_C S).
  \end{equation*}
\end{definition}

By definition, the algebra $\calA(T)\nabla_C\calA(S)$ has a presentation, given by a set of generators $$\calZ=\{Z_1,\dots,Z_n,Z_{n+1},\dots,Z_{n+m}\}$$ and relations:

\begin{equation}\label{eq1-presentation-for-AT-nablaC-AS}
  Z_k^2=\sum_{j<k}{t_{kj}Z_jZ_k},  \quad\textrm{if}\quad k\leq n,
\end{equation}
and

\begin{equation}\label{eq2-presentation-for-AT-nablaC-AS}
  Z_{n+k}^2=\sum_{j\leq n}{c_{kj}Z_jZ_{n+k}}+\sum_{j\leq k }{s_{kj}Z_{n+j}Z_{n+k}},  \quad\textrm{if}\quad k\leq m .
\end{equation}

\begin{example}\label{ex1-nabla-product}
  Let
  \begin{equation*}
    T={\left[\begin{matrix}
              0 & 0 & 0 & 0 \\
              -1 & 0 & 0 & 0 \\
              -1& -1 & 0 & 0 \\
              -1 & -1 & -1 & 0
            \end{matrix}\right]},\quad
    S={\left[\begin{matrix}
              0 & 0 \\
              2 & 0
            \end{matrix}\right]}.
  \end{equation*}
  and take
  \begin{equation*}
    C={\left[\begin{matrix}
              1 & 2 & 0 & 0 \\
              0 & 0 & 1 & -1
            \end{matrix}\right]}.
  \end{equation*}

  Then
  \begin{equation*}
    T\nabla_C S=\left[\begin{matrix}
                        {0} & {0}& {0}& {0} & 0 & 0 \\
                        {-1}& {0}& {0}& {0} & 0 & 0 \\
                        {-1}&{-1}& {0}& {0} & 0 & 0 \\
                        {-1}&{-1}&{-1}& {0} & 0 & 0 \\
                         {1}& {2}& {0}& {0} & {0} & {0} \\
                         {0}& {0}& {1}&{-1} & {2} & {0}
                      \end{matrix}\right]
  \end{equation*}
  Then the presentation of the algebra $\calA(T)\nabla_C \calA(S),$ is given by the set of generators $\{Z_1,Z_2,Z_3,Z,_4,Z_5,Z_6\}$ and relations:

  \begin{equation*}
    \left\{\begin{array}{cc}
             Z_1^2= 0&\quad \\
             \quad&\quad \\
             Z_2^2=-Z_1Z_2&\quad \\
              \quad&\quad\\
             Z_3^2=-Z_1Z_3-Z_2Z_3&\quad \\
              \quad&\quad \\
              Z_4^2=-Z_1Z_4-Z_2Z_4-Z_3Z_4&\quad \\
              \quad&\quad \\
              Z_5^2=Z_1Z_5+2Z_2Z_5&\quad \\
              \quad&\quad \\
             Z_6^2=Z_3Z_6-Z_4Z_6+2Z_5Z_6&\quad
           \end{array}\right.
  \end{equation*}
\end{example}

\begin{lemma}\label{lemma-incrustation-AT-into-ATnablaAS}
  There is a natural monomorphism $\calA(T)\rightarrow \calA(T)\nabla_C\calA(S)$ in the category $\bcalNGAT.$
\end{lemma}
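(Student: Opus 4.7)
The plan is to define the map $\bgamma: \calA(T) \to \calA(T) \nabla_C \calA(S)$ by the assignment $X_k \mapsto Z_k$ for $k = 1, \ldots, n$, extended as an algebra homomorphism. First I would check that this assignment is well-defined: the generators $Z_1, \ldots, Z_n$ of $\calA(T) \nabla_C \calA(S) = \calA(T \nabla_C S)$ satisfy, by relations \eqref{eq1-presentation-for-AT-nablaC-AS}, exactly the defining relations $Z_k^2 = \sum_{j<k} t_{kj} Z_j Z_k$ of $\calA(T)$. Since $\deg(Z_k) = 2 = \deg(X_k)$ in both gradings, the map preserves degrees and hence defines a morphism in $\bcalNGAT$.

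Next I would prove injectivity. The image $\bgamma(\calA(T))$ is the subalgebra of $\calA(T) \nabla_C \calA(S)$ generated by $Z_1, \ldots, Z_n$, which by the relations above is weakly associated to $T$ relative to this generating set. By Theorem \ref{theo-all-AT-is-strongly}, the algebra $\calA(T \nabla_C S)$ is strongly associated to $T \nabla_C S$, so it admits the monomial basis $W_0$ of size $2^{n+m}$ described in Lemma \ref{lemma-bounded-dimension-for-AT}. In particular, the element $Z_1 Z_2 \cdots Z_n$ (the monomial with exponents $1$ on $Z_1, \ldots, Z_n$ and $0$ on $Z_{n+1}, \ldots, Z_{n+m}$) belongs to $W_0$ and is therefore nonzero.

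Applying Lemma \ref{lemma-monomial-basis} to the subalgebra $\bgamma(\calA(T))$, the nonvanishing of the product of all its generators implies that $\bgamma(\calA(T))$ is strongly associated to $T$, and hence $\dim \bgamma(\calA(T)) = 2^n$. Since $\dim \calA(T) = 2^n$ also by Theorem \ref{theo-all-AT-is-strongly}, the surjection $\calA(T) \twoheadrightarrow \bgamma(\calA(T))$ is an isomorphism between finite-dimensional vector spaces of the same dimension, so $\bgamma$ is injective. This gives the desired monomorphism.

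No step here presents a serious obstacle; the structure of the argument is essentially a bookkeeping exercise using the two results (Theorem \ref{theo-all-AT-is-strongly} and Lemma \ref{lemma-monomial-basis}) already established. The only point requiring some care is the observation that the monomial $Z_1 \cdots Z_n$ of $\calA(T \nabla_C S)$ is a basis element, which is what transmits the ``strongly associated'' property from the ambient algebra down to the subalgebra generated by the first $n$ variables.
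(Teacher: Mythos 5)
Your proof is correct and follows essentially the same route as the paper: define $X_k\mapsto Z_k$, observe the image is weakly associated to $T$, note that $Z_1\cdots Z_n$ is nonzero because it is a monomial basis element of the strongly associated algebra $\calA(T\nabla_C S)$, and invoke Lemma \ref{lemma-monomial-basis} together with the dimension count $2^n$ to get injectivity. The only difference is that you spell out the final dimension argument slightly more explicitly than the paper does.
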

\begin{proof}
  Let $\calX=\{X_1,\dots,X_n\}$ and $\calZ=\{Z_1,\dots,Z_{n+m}\}$ the corresponding set of generators of $\calA(T)$ and $\calA(T)\nabla_C\calA(S).$ Consider the assignment:
  \begin{equation*}
    \iota:X_i\mapsto Z_i,\quad i=1,\dots,n.
  \end{equation*}

  Let $\bcalU$ the algebra generated by the images of $\iota.$ Then by equation \ref{eq2-presentation-for-AT-nablaC-AS}, $\iota$ induces a morphism $\calA(T)\rightarrow \calA(T)\nabla_C\calA(S)$  and $\bcalU$ is a graded algebra weakly associated to the matrix $T.$

  Since the algebra $\calA(T)\nabla_C\calA(S)$ is, by definition (and theorem \ref{theo-all-AT-is-strongly}), strongly associated to the matrix $T\nabla_C S,$ we have in particular that the element $Z_1\cdots Z_n$ is different from zero, then by lemma \ref{lemma-monomial-basis} we have that $\dim(\bcalU)=\dim(\calA(T))$ and therefore $\iota: \calA(T)\rightarrow \calA(T)\nabla_C\calA(S)$ is a monomorphism.
\end{proof}



\begin{lemma}\label{lemma-quotient-ATnablaAS-over-AS}
  There is a natural epimorphism $\calA(T)\nabla_C\calA(S)\rightarrow\calA(S)$ in the category $\bcalNGAT.$
\end{lemma}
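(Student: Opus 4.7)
The plan is to exhibit the epimorphism explicitly by sending the ``$T$-part'' of the generators to zero and mapping the ``$S$-part'' identically onto $\calA(S)$. Concretely, let $\calZ=\{Z_1,\dots,Z_{n+m}\}$ be the generators of $\calA(T)\nabla_C\calA(S)$ and $\calY=\{Y_1,\dots,Y_m\}$ those of $\calA(S)$, and define an assignment
\begin{equation*}
\pi: Z_i\mapsto 0\quad (1\leq i\leq n),\qquad Z_{n+k}\mapsto Y_k\quad (1\leq k\leq m).
\end{equation*}
I would first argue that $\pi$ preserves the defining relations of $\calA(T)\nabla_C\calA(S)$ listed in \ref{eq1-presentation-for-AT-nablaC-AS} and \ref{eq2-presentation-for-AT-nablaC-AS}, and therefore extends uniquely to an algebra homomorphism $\pi:\calA(T)\nabla_C\calA(S)\to\calA(S)$.

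For the relations \ref{eq1-presentation-for-AT-nablaC-AS} with $k\leq n$, both sides are mapped to $0$, so the relation holds trivially. For \ref{eq2-presentation-for-AT-nablaC-AS} with $k\leq m$, the left-hand side becomes $Y_k^2$, while in the right-hand side the sum $\sum_{j\leq n}c_{kj}Z_jZ_{n+k}$ is annihilated term by term, and the sum $\sum_{j\leq k}s_{kj}Z_{n+j}Z_{n+k}$ becomes $\sum_{j\leq k}s_{kj}Y_jY_k$. Since $S$ is strictly lower triangular, $s_{kk}=0$, so this last sum equals $\sum_{j<k}s_{kj}Y_jY_k$, which is precisely the defining relation of $\calA(S)$. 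Hence the image of \ref{eq2-presentation-for-AT-nablaC-AS} under $\pi$ is a valid relation in $\calA(S)$.

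Surjectivity is immediate from the fact that each generator $Y_k$ of $\calA(S)$ lies in the image (namely $\pi(Z_{n+k})=Y_k$). Finally, $\pi$ is a morphism in the category $\bcalNGAT$: it is a homomorphism of algebras, and it sends homogeneous elements of degree $d$ to homogeneous elements of degree $d$ (the generators $Z_i$, whether mapped to $0$ or to $Y_k$, have degree $2$, matching the grading convention \ref{rel-def-degree-AT}). There is no real obstacle here — the only mild point is the observation that the diagonal term of $S$ vanishes, which is exactly what makes the ``mixed'' relation \ref{eq2-presentation-for-AT-nablaC-AS} collapse to the defining relation of $\calA(S)$ after applying $\pi$.
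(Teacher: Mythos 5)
Your proposal is correct and follows exactly the same route as the paper: the same assignment $\pi$ (kill $Z_1,\dots,Z_n$, send $Z_{n+k}\mapsto Y_k$), verification that the defining relations are preserved, and surjectivity from the generators lying in the image. You merely spell out the relation check (including the harmless observation that $s_{kk}=0$) which the paper dismisses as trivial.
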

\begin{proof}
  Let $\calZ=\{Z_1,\dots,Z_{n+m}\}$ and $\calY=\{Y_1,\dots,Y_m\}$ the corresponding set of generators of $\calA(T)\nabla_{C}\calA(S)$ and $\calA(S)$ respectively. Consider the assignment:
  \begin{equation*}
    \pi: \left\{\begin{array}{cc}
                  Z_j\mapsto 0 & \quad\textrm{if}\quad 1\leq j\leq n \\
                  \quad & \quad \\
                  Z_{n+j}\mapsto Y_j & \quad\textrm{if}\quad 1\leq j\leq m
                \end{array}\right.
  \end{equation*}
    The images of $\pi,$ satisfy relations \ref{eq1-presentation-for-AT-nablaC-AS} and \ref{eq2-presentation-for-AT-nablaC-AS} trivially. Then $\pi$ defines a morphism $\pi:\calA(T)\nabla_C\calA(S)\rightarrow\calA(S).$

    Let $\bcalU\subset \calA(S)$ the algebra generated by the images of $\pi.$ Since $\bcalU$ contains all the generators of $\calA(S),$ we have $\bcalU=\calA(S),$ and $\pi$ is an epimorphism.
\end{proof}

The last lemma implies that $\calA(S)$ is isomorphic as graded algebra to the quotient graded algebra $$\left(\calA(T)\nabla_C\calA(S)\right)/\langle Z_1,\dots,Z_n\rangle,$$
where $\langle Z_1,\dots,Z_n\rangle$ denotes the ideal of $\calA(T)\nabla_C\calA(S)$ generated by the elements $Z_1,\dots,Z_n\in \calA(T)\nabla_C\calA(S).$

The following lemma provides us of some \emph{weak associative property} for the $\nabla-$products in $\bcalNT$:

\begin{lemma}\label{lemma-associativity-nabla-prod-matrix}
Given three strictly lower triangular matrices $T=[t_{ij}]_{n\times n}, S=[s_{ij}]_{m\times m},U=[u_{ij}]_{p\times p}$ and three arbitrary matrices $C=[c_{ij}]_{m\times n},D=[d_{ij}]_{p\times m},F=[f_{ij}]_{p\times n}$ then we have:

\begin{equation*}
  (\calA(T)\nabla_{C} \calA(S))\nabla_{\widetilde{D}}\calA(U)=\calA(T)\nabla_{\widetilde{C}}(\calA(S)\nabla_{D}\calA(U)),
\end{equation*}

where $\widetilde{C},\widetilde{D}$ are the adequate extensions of $C,D$ by $F$ defined as:
\begin{equation*}
  \widetilde{C}=\left[\begin{array}{c}
                        C  \\
                        F
                      \end{array}\right]_{(m+p)\times n },\quad \widetilde{D}=\left[\begin{array}{cc}
                                                                     F & D
                                                                   \end{array}\right]_{p\times (m+n) }.
\end{equation*}

\end{lemma}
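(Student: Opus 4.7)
The plan is to unwind the definitions on both sides and observe that they both describe the nil graded algebra strongly associated to the same $(n+m+p)\times(n+m+p)$ block triangular matrix. Concretely, by Definition \ref{def-nabla-product-for-algebras}, we have $\calA(T)\nabla_C\calA(S)=\calA(T\nabla_C S)$ and $\calA(S)\nabla_D\calA(U)=\calA(S\nabla_D U)$, so both sides of the claimed identity are of the form $\calA(M)$ for some strictly lower triangular matrix $M$. It therefore suffices to verify the purely combinatorial identity of block matrices
\[
(T\nabla_C S)\nabla_{\widetilde D} U \;=\; T\nabla_{\widetilde C}(S\nabla_D U).
\]

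First I would compute the left-hand matrix. By definition $T\nabla_C S=\left[\begin{matrix}T&0\\ C&S\end{matrix}\right]$, which is $(n+m)\times(n+m)$. Then applying $\nabla_{\widetilde D}$ with $\widetilde D=[F\ D]$ of size $p\times(n+m)$ stacks the $p\times p$ block $U$ under it, giving
\[
(T\nabla_C S)\nabla_{\widetilde D} U=\left[\begin{matrix}T&0&0\\ C&S&0\\ F&D&U\end{matrix}\right].
\]
Second, for the right-hand side, $S\nabla_D U=\left[\begin{matrix}S&0\\ D&U\end{matrix}\right]$ is $(m+p)\times(m+p)$, and prepending $T$ via $\nabla_{\widetilde C}$ with $\widetilde C=\left[\begin{matrix}C\\ F\end{matrix}\right]$ of size $(m+p)\times n$ produces exactly the same block matrix. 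Thus both sides are equal to $\calA(M)$ where
\[
M=\left[\begin{matrix}T&0&0\\ C&S&0\\ F&D&U\end{matrix}\right].
\]

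There is no real obstacle here; the statement is essentially a bookkeeping check that block-lower-triangular concatenation is associative when one tracks the off-diagonal blocks carefully through the $\widetilde C$ and $\widetilde D$ constructions. The only thing to be slightly careful about is that the indexing conventions in Definition \ref{def-nabla-product-for-algebras} match on both sides, namely that the first $n$ generators correspond to $T$, the next $m$ to $S$, and the last $p$ to $U$, with the couplings between the $T$-block and the $U$-block being encoded by the auxiliary matrix $F$ in either grouping. Once this identification is made explicit, the lemma follows immediately from Theorem \ref{theo-all-AT-is-strongly}, which guarantees that the abstract algebra $\calA(M)$ is well-defined (strongly associated to $M$) and does not depend on how we have built $M$ by iterated block concatenation.
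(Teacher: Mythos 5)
Your proof is correct and follows essentially the same route as the paper: reduce the statement to the block-matrix identity $(T\nabla_{C} S)\nabla_{\widetilde{D}}U=T\nabla_{\widetilde{C}}(S\nabla_{D}U)$ and verify that both sides equal the same $3\times 3$ block lower triangular matrix with blocks $T,S,U$ on the diagonal and $C,D,F$ below. The additional remarks about indexing conventions and Theorem \ref{theo-all-AT-is-strongly} are harmless elaborations of what the paper leaves implicit.
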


\begin{proof}
  We only have to prove that
  \begin{equation*}
  (T\nabla_{C} S)\nabla_{\widetilde{D}}U=T\nabla_{\widetilde{C}}(S\nabla_{D}U),
\end{equation*}

  By definition:

  \begin{equation*}
   (T\nabla_{C} S)\nabla_{\widetilde{D}}U=\left[\begin{array}{cc}
                                                  T\nabla_{C} S & 0 \\
                                                  \widetilde{D} & U
                                                \end{array}\right] =\left[\begin{array}{ccc}
                                                                            T & 0 & 0 \\
                                                                            C & S & 0 \\
                                                                            F & D & U
                                                                          \end{array}\right]
  \end{equation*}
  and

  \begin{equation*}
   T\nabla_{\widetilde{C}}(S\nabla_{D}U)=\left[\begin{array}{cc}
                                                  T & 0 \\
                                                  \widetilde{C} & S\nabla_{D}U
                                                \end{array}\right] =\left[\begin{array}{ccc}
                                                                            T & 0 & 0 \\
                                                                            C & S & 0 \\
                                                                            F & D & U
                                                                          \end{array}\right]
  \end{equation*}

  and the desired result follows.
\end{proof}




We denote by $\calA(T)\nabla_0\calA(S)$ whenever $C$ is a zero matrix. We have directly from lemma \ref{lemma-associativity-nabla-prod-matrix} that:

\begin{corollary}
  Given three strictly lower triangular matrices $T,S,U,$ then we have
  \begin{equation*}
    (\calA(T)\nabla_0\calA(S))\nabla_0\calA(U)=\calA(T)\nabla_0(\calA(S)\nabla_0\calA(U)).
  \end{equation*}
\end{corollary}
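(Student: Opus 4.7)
The plan is to simply specialize Lemma \ref{lemma-associativity-nabla-prod-matrix} to the case where the ``connecting'' matrices are all zero. First, I would take $C=0_{m\times n}$, $D=0_{p\times m}$, and $F=0_{p\times n}$ in the statement of Lemma \ref{lemma-associativity-nabla-prod-matrix}. With these choices, the algebras $\calA(T)\nabla_C\calA(S)$ and $\calA(S)\nabla_D\calA(U)$ are, by the notational convention introduced right before the corollary, exactly $\calA(T)\nabla_0\calA(S)$ and $\calA(S)\nabla_0\calA(U)$.

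Next, I would verify that the extended matrices $\widetilde{C}$ and $\widetilde{D}$ from Lemma \ref{lemma-associativity-nabla-prod-matrix} are themselves zero. Since
\[
\widetilde{C}=\left[\begin{array}{c} C \\ F\end{array}\right]=\left[\begin{array}{c} 0_{m\times n}\\ 0_{p\times n}\end{array}\right]=0_{(m+p)\times n},\qquad
\widetilde{D}=\left[\begin{array}{cc} F & D\end{array}\right]=\left[\begin{array}{cc} 0_{p\times n} & 0_{p\times m}\end{array}\right]=0_{p\times (m+n)},
\]
both are zero matrices of the appropriate sizes. Hence $(\calA(T)\nabla_0\calA(S))\nabla_{\widetilde{D}}\calA(U)$ coincides with $(\calA(T)\nabla_0\calA(S))\nabla_0\calA(U)$, and similarly on the right-hand side.

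Finally, plugging these identifications into the conclusion of Lemma \ref{lemma-associativity-nabla-prod-matrix} gives exactly the asserted equality. There is no real obstacle here: the entire content is bookkeeping to confirm that the zero-matrix case of the weak associativity property reduces to genuine associativity of $\nabla_0$. The only thing worth double-checking is that the dimensions of the zero matrices chosen for $C$, $D$, and $F$ are consistent so that $\widetilde{C}$ and $\widetilde{D}$ are well-defined, which is immediate from the definitions.
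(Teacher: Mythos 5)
Your proof is correct and matches the paper's intent exactly: the paper gives no separate argument, stating only that the corollary follows directly from Lemma \ref{lemma-associativity-nabla-prod-matrix}, and your specialization to $C=D=F=0$ with the observation that $\widetilde{C}$ and $\widetilde{D}$ are then zero is precisely the bookkeeping that justifies this. Nothing further is needed.
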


 The following lemma provides us with a \emph{commutative property} for the products $\nabla_0$ in the category $\bcalNGAT:$

\begin{lemma}\label{lemma-AT-tensor-AS-is-commutative}
The objects $ \calA(T)\nabla_0 \calA(S)$ and $\calA(S)\nabla_0 \calA(T)$ are isomorphic in the category $\bcalNGAT$.
\end{lemma}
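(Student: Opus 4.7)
The plan is to exhibit an explicit degree-preserving isomorphism directly from the presentations. Let $\calZ=\{Z_1,\dots,Z_n,Z_{n+1},\dots,Z_{n+m}\}$ be the generators of $\calA(T)\nabla_0\calA(S)$ and let $\calW=\{W_1,\dots,W_m,W_{m+1},\dots,W_{m+n}\}$ be the generators of $\calA(S)\nabla_0\calA(T).$ I would define a map $\bgamma:\calA(T)\nabla_0\calA(S)\rightarrow\calA(S)\nabla_0\calA(T)$ by the assignment
\begin{equation*}
Z_i \mapsto W_{m+i} \quad (1\leq i\leq n),\qquad Z_{n+k}\mapsto W_k\quad (1\leq k\leq m).
\end{equation*}

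The first step is to check this assignment respects the defining relations \eqref{eq1-presentation-for-AT-nablaC-AS} and \eqref{eq2-presentation-for-AT-nablaC-AS} of $\calA(T)\nabla_0\calA(S).$ Since the coupling matrix $C$ is zero, for $k\leq n$ we need $(W_{m+k})^2=\sum_{j<k}t_{kj}W_{m+j}W_{m+k},$ which is precisely the defining relation for $W_{m+k}$ inside $\calA(S)\nabla_0\calA(T),$ again because the coupling matrix of the right-hand side is zero. Similarly, for $k\leq m$ we need $(W_k)^2=\sum_{j<k}s_{kj}W_jW_k,$ which is the defining relation of $W_k$ in $\calA(S).$ Hence $\bgamma$ is a well-defined morphism in $\bcalNGAT,$ and it is preserving-degree since each $Z_r$ (of degree $2$) is sent to a single generator $W_?$ (of degree $2$).

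The second step is to verify invertibility via Corollary \ref{coro-conditions3-for-isomorphisms}. The matrix $\Gamma\in\calM_{(n+m)\times(n+m)}(\F)$ associated to $\bgamma$ is, by construction, the permutation matrix corresponding to the cyclic-block swap of indices $\{1,\dots,n\}$ with $\{n+1,\dots,n+m\}.$ In particular $\det(\Gamma)=\pm 1\neq 0,$ so $\Gamma$ is invertible and Corollary \ref{coro-conditions3-for-isomorphisms} immediately yields that $\bgamma$ is an isomorphism in $\bcalNGAT.$

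There is no real obstacle here: the hypothesis $C=0$ decouples the two blocks of generators, so the presentations of $\calA(T)\nabla_0\calA(S)$ and $\calA(S)\nabla_0\calA(T)$ are formally symmetric in the roles of $T$ and $S,$ and the isomorphism is essentially a relabeling of the generators. Alternatively, one could bypass the matrix check entirely and invoke Lemma \ref{lemma-conditions1-for-isomorphisms}: the image of the top monomial $Z_1Z_2\cdots Z_{n+m}$ equals, up to commuting factors, $W_1W_2\cdots W_{m+n},$ which is nonzero because $\calA(S)\nabla_0\calA(T)$ is strongly associated to $S\nabla_0 T$ by Theorem \ref{theo-all-AT-is-strongly}.
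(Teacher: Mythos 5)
Your proof is correct and follows essentially the same route as the paper: the same explicit block-swap of generators, the same verification that the decoupled ($C=0$) relations are respected, and then invertibility. The only cosmetic difference is that you primarily invoke Corollary \ref{coro-conditions3-for-isomorphisms} (invertibility of the permutation matrix $\Gamma$) where the paper checks that the image of the top monomial is nonzero via Lemma \ref{lemma-conditions1-for-isomorphisms} --- and you note that alternative yourself.
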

\begin{proof}
  Let $\{Z_1,\dots,Z_{n+m}\}$ and $\{V_1,\dots,V_{n+m}\}$ the set of generators of $ \calA(T)\nabla_0 \calA(S)$ and $\calA(S)\nabla_0 \calA(T),$ respectively.
  Then we have
  \begin{equation*}
    Z_{r}^{2}=\left\{\begin{array}{cc}
             \sum_{j<r}{t_{jr}}Z_jZ_r, & \textrm{if}\quad 1\leq r\leq n \\
             \quad & \quad \\
              \sum_{n<j<r}{s_{{j-n},{r-n}}}Z_jZ_r,& \textrm{if}\quad n< r \leq n+m
           \end{array}\right.
  \end{equation*}
  and
  \begin{equation*}
    V_{r}^{2}=\left\{\begin{array}{cc}
             \sum_{j<r}{s_{jr}}V_jV_r, & \textrm{if}\quad 1\leq r\leq m \\
             \quad & \quad \\
              \sum_{m<j<r}{t_{{j-m},{r-m}}}V_jV_r,& \textrm{if}\quad m< r \leq n+m
           \end{array}\right.
  \end{equation*}
  Therefore the assignment
  \begin{equation*}
    \bgamma:\left\{\begin{array}{cc}
                     Z_r\mapsto V_{r+m} & \quad \textrm{if}\quad r=1,\dots,n \\
                     Z_r \mapsto V_{r-n}& \quad \textrm{if}\quad r=n+1,\dots,n+m
                   \end{array}\right.
  \end{equation*}
  defines a morphism. Since the product $\bgamma(Z_r)\cdots \bgamma(Z_{n+m})\neq 0,$ we conclude that $\bgamma$ is an isomorphism.
\end{proof}

\section{Applications to Soergel Calculus.}\label{sec-Applic-Soergel}

\subsection{The Diagrammatic Soergel category }\label{ssec-Soergel-category}
Following the work of Elias and Williamson (\cite{EW}), given a Coxeter system of finite rank $(W,S)$ (relative to the set of indexes $J$ and the Coxeter matrix $\mathbf{m}$), we consider a \emph{realization} $(\mathfrak{h},\{\alpha^{\vee}_a\},\{\alpha_a\})$ of $(W,S)$ over the field $\F,$ that is a $\F-$vector space $\mathfrak{h},$ and subsets $\{\alpha^{\vee}_a: a\in J\}\subset\mathfrak{h},$ and  $\{\alpha_a:a\in J\}\subset\mathfrak{h}^{\ast},$  where the equations:



\begin{equation}\label{definition-geometric-representation}
  \left\{\begin{array}{cc}
           \beta\cdot s_{a} =\beta-\langle\alpha_a,\beta \rangle\alpha^{\vee}_a, & \quad (\beta \in \mathfrak{h}). \\
           \quad & \quad \\
           \gamma\cdot s_{a}=\gamma-\langle\gamma,\alpha^{\vee}_a\rangle \alpha_a, & \quad (\gamma \in \mathfrak{h}^{\ast}).
         \end{array}\right.
\end{equation}

define \emph{representations} of the group $W$ on $\mathfrak{h}$ and $\mathfrak{h}^{\ast}$ respectively.


Let $\bcalR=S(\mathfrak{h}^{\ast})$ be the symmetric algebra of $\mathfrak{h}^{\ast}$ over $\F,$ and consider the grading in $\bcalR$ induced by the equation $\deg(\mathfrak{h}^{\ast})=2.$ The action of $W$ on $\mathfrak{h}^{\ast}$ extends to an action on $\bcalR.$

For each $a\in J$ we define the \emph{Demazure operator} $\partial_a:\bcalR\rightarrow \bcalR(-2)$ (see \cite{EW} for details) as follows:
\begin{equation}\label{def-demazure-operator}
  \partial_a (f)=\dfrac{f- f\cdot s_a}{\alpha_a}.
\end{equation}

Note that for $\gamma\in \mathfrak{h}^{\ast}\subset \bcalR$ we have $\partial_a(\gamma)=\langle \gamma,\alpha^{\vee}_a\rangle.$

In the following we refer to the element of $J$ as \emph{colors}. Sometimes will be useful to add a coloration to differentiate element on $J,$ for example we write ${\color{blue}a}$ and ${\color{red}b},$ instead of $a$ and $b.$

\begin{definition}\label{def-soergel-graph}
  A \emph{Soergel Graph} relative to the Coxeter system $(W,S)$ is a finite and decorated graph $\calS$ embedded in the planar strip $\mathbb{R}\times[0,1].$ The edges of a Soergel graph are \emph{colored} by the colors $a\in J.$ The vertices in this graphs are of the following types: Univalent Vertices ($V1$) and Trivalent Vertices ($V3$) involving one color, and $2m-$\emph{valent vertices} ($V2m$), involving pair of colors ${\color{blue}a},{\color{red}b}\in J$ such that $m=\mathbf{m}({\color{blue}a},{\color{red}b})<\infty.$
  \begin{equation*}\label{draw-univalent-vertex}
  \begin{tikzpicture}[xscale=0.5,yscale=0.5]
\draw[thick,blue] (0-1,0)--(0-1,1);
\node[below] at (0.5-1,0.5) {;};
\draw[fill,blue] (0-1,1) circle [radius=0.1];
\node[] at (0-1,-1) {$V1$};
\draw[thick,blue] (2,2)--(0+2,1);
\draw[thick,blue] (2,1)--(-1+2,0);
\draw[thick,blue] (2,1)--(1+2,0);
\node[below] at (3.5,0.5) {;};
\node[] at (2,-1) {$V3$};
\draw[thick,blue] (2+4,2)--(0+4,0);
\draw[thick,green] (0+4,2)--(2+4,0);
\node[below] at (6.5,0.5) {;};
\node[] at (5,-1) {$V4$};
\draw[thick,blue] (0+8,2)--(0+8,1);
\draw[thick,blue] (0+8,1)--(-1+8,0);
\draw[thick,blue] (0+8,1)--(1+8,0);
\draw[thick,red] (0+8,0)--(0+8,1);
\draw[thick,red] (0+8,1)--(-1+8,2);
\draw[thick,red] (0+8,1)--(1+8,2);
\node[] at (8,-1) {$V6$};
\node[below] at (9.5,0.5) {;};
\node[] at (11,0) {etc.};
\end{tikzpicture}
\end{equation*}

(Here we are assuming that $\mathbf{m}({{\color{blue}a},{\color{green}c}})=2$ and $\mathbf{m}({{\color{blue}a},{\color{red}b}})=3$)

  The different connected components of the complement of $\calS$ in $\mathbb{R}\times[0,1]$ are called the \emph{regions} of $\calS.$
   Each region of $\calS$ may be decorated with one or several patches, each of them labeled by homogeneous element $f\in \bcalR.$
\end{definition}



\begin{figure}
  \centering
  \includegraphics[scale=0.25]{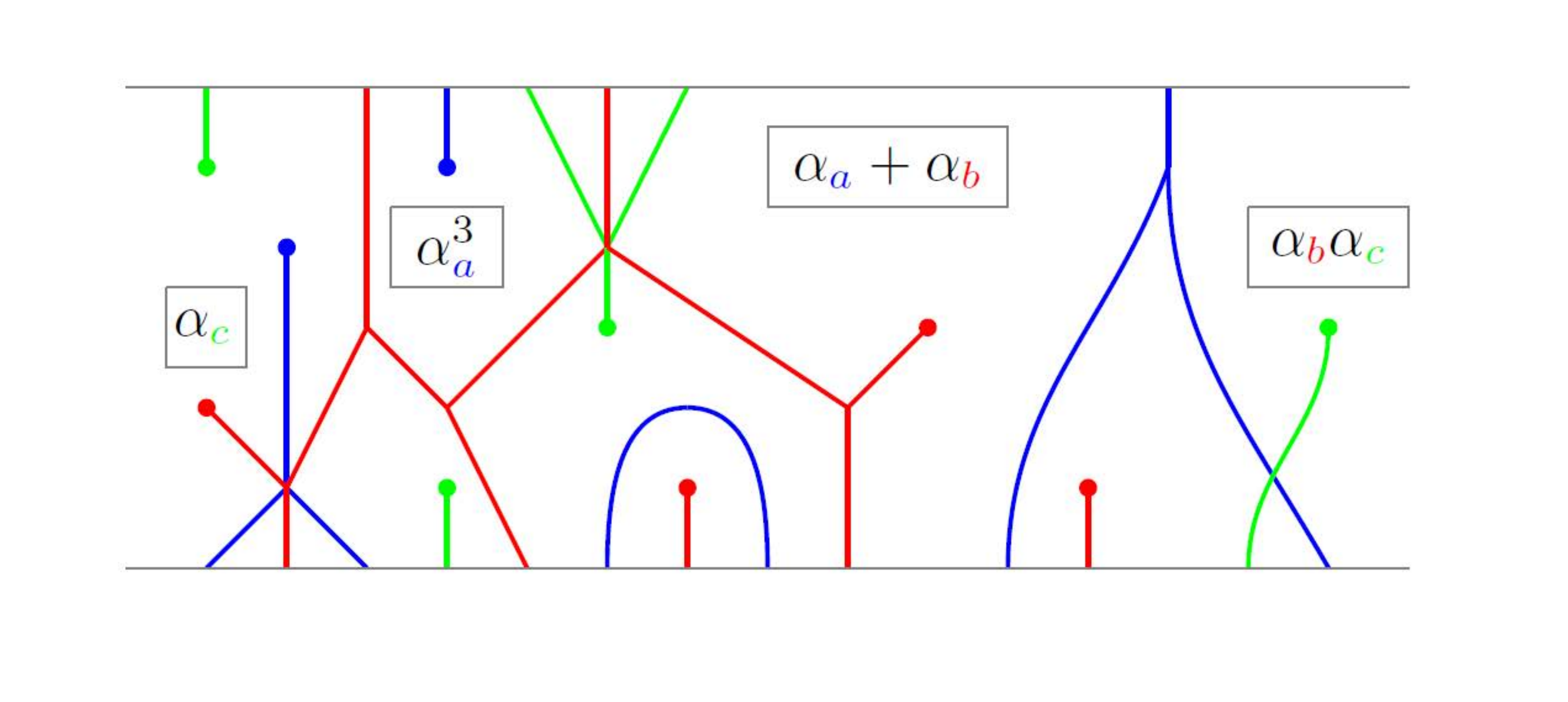}
  \caption{A Soergel graph}\label{FIG-A1A2}
\end{figure}




The \emph{degree} of a Soergel graph $\calS$ is defined as the sum of the degrees of each of the homogeneous elements $f\in \bcalR$ that appear in the patches of $\calS,$ plus the sum of the degree of each vertex, where by definition, the univalent vertices have degree $+1,$ trivalent vertices have degree $-1$ and the other vertices have degree $0.$ The boundary points of a Soergel graph define two sequences of colors, the \emph{top boundary} and the \emph{bottom boundary}. We naturally identify each sequence of colors  $(a_1,\dots,a_p)\in J^{p}$ with an expression $\underline{w}=s_{a_1}s_{a_2}\cdots s_{a_p}\in W^{\ast}.$



\begin{definition}\label{def-Soergel-Category}
  The \emph{diagrammatic Soergel Category} relative to the Coxeter system $(W,S),$ is defined as the monoidal category $\calD,$ whose objects are all the \emph{color sequences} (including the empty one) and whose morphisms sets $\textrm{Hom}_\calD(\underline{u},\underline{v})$ are the $\F-$vector spaces generated by all the Soergel graphs with bottom boundary $\underline{u}$ and top boundary $\underline{v},$ modulo isotopy and modulo a series of local relations, that we partially describe  in the following list (we do not use the complete list of relations in this article, to see the complete definition, read \cite{EW}):
\begin{itemize}
\item \begin{equation}\label{eq-polynomial-relation-1}
\begin{tikzpicture}[xscale=0.5,yscale=0.5]
\draw[thick,blue] (0,0)--(0,1);
\node[] at (3.2,0.5) {$=\quad\alpha_{\color{blue}a}$};
\draw[thin,gray] (4.5,1)--(3.5,1);
\draw[thin,gray] (4.5,0)--(3.5,0);
\draw[thin,gray] (4.5,0)--(4.5,1);
\draw[thin,gray] (3.5,0)--(3.5,1);
\draw[dashed,gray] (0,0.5) circle [radius=1];
\draw[dashed,gray] (4,0.5) circle [radius=1];

\draw[fill,blue] (0,1) circle [radius=0.1];
\draw[fill,blue] (0,0) circle [radius=0.1];
\end{tikzpicture}
\end{equation}
\item \begin{equation}\label{eq-polynomial-relation-2}
      \includegraphics[scale=0.17]{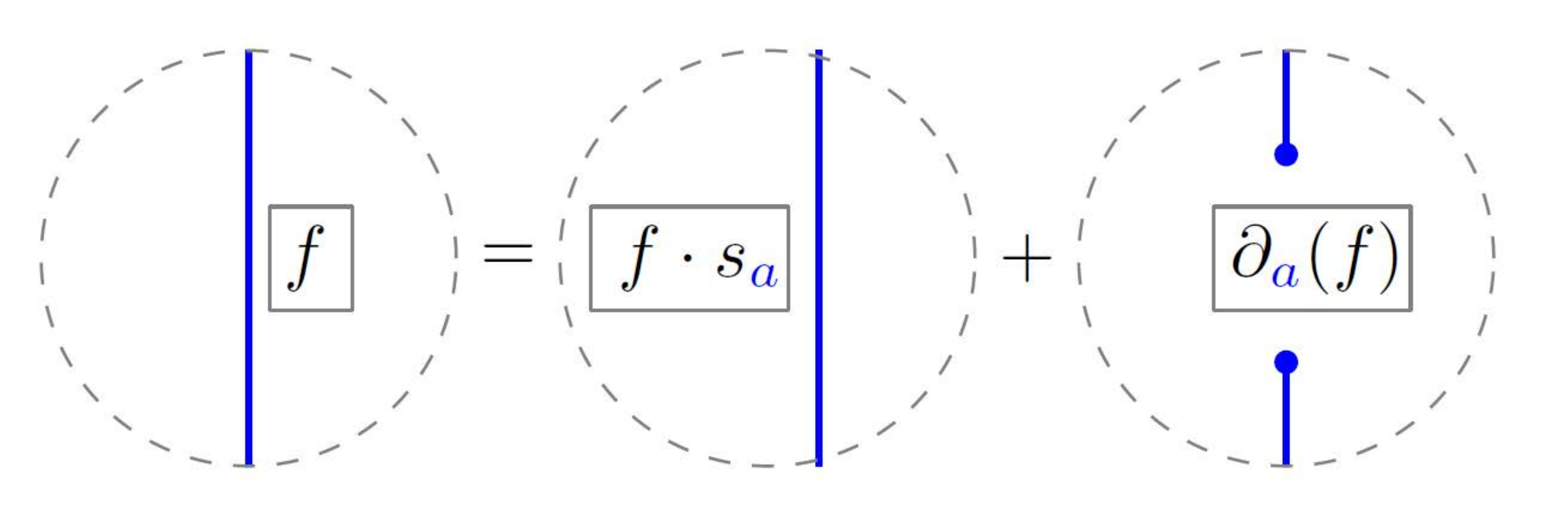}
\end{equation}
\item If $f\in \bcalR$ is an homogeneous element and $\calS$ is a Soergel graph, then
      \begin{equation}\label{eq-left-annihilation}
             f\calS =0,\quad\textrm{whenever}\quad \deg(f)>0.
      \end{equation}
 \item \emph{Other relations} (see \cite{EW}).
  \end{itemize}

  If $\calS$ is a Soergel graph in $\textrm{Hom}_{\calD}(\underline{u},\underline{v}),$ and $\calT$ is a Soergel graph in $\textrm{Hom}_{\calD}(\underline{v},\underline{w}),$ then the composition $\calT\calS$ is defined by vertical concatenation ($\calT$ above of $\calS$). The composition
  $\textrm{Hom}_{\calD}(\underline{v},\underline{w})\times \textrm{Hom}_{\calD}(\underline{u},\underline{v})\rightarrow\textrm{Hom}_{\calD}(\underline{u},\underline{w}),$
  is defined by linearity. The monoidal operation in $\calD$ is defined by horizontal concatenation and denoted by $\otimes.$
\end{definition}






Thanks to the work of Libedinsky, we know that there are explicit bases
(called \emph{the double light leaves bases}) $\mathbb{L}\mathbb{L}_{\ulu}^{\ulv}(\calS,\calR)$ for the many vector spaces $\textrm{Hom}_{\calD}(\underline{u},\underline{v}).$ We will mention some of the elements of those bases later. For details read \cite{LibLightLeaves}.



\subsection{Nil graded algebras associated to free expressions}\label{ssec-nil-alg-and-expressions}

Let $(W,S)$ a Coxeter system. We start this subsection by defining an action of the free monoid $W^{\ast}$ on the algebra $\bcalR.$ This action is an adapted version of the action of the group $W$ on $\bcalR.$

\begin{definition}
Let $\ulw\in W^{\ast}$ an expression of $w\in W,$  then we define:
\begin{equation}\label{eq-def-monoid-acting-on-bcalR}
  \underline{w}\circ f=f\cdot w^{-1},\quad (f\in \bcalR).
\end{equation}
\end{definition}

The right side of equation \ref{eq-def-monoid-acting-on-bcalR} corresponds to the action of the group $W$ acting on $\bcalR.$ Since the action of the group is well defined, the action of the monoid $W^{\ast}$ is well defined (even if we take a not reduced expression of $w$ in equation \ref{eq-def-monoid-acting-on-bcalR}).

More particularly we have:

\begin{equation}\label{eq-def-monoid-acting-on-roots}
  s_{a}\circ f=f\cdot s_{a},\quad (f\in \bcalR).
\end{equation}
where the $s_a$ represent an expression in $W^{\ast}$ at the left and an element of the group $W$ at the right side of equation \ref{eq-def-monoid-acting-on-roots}.

Also note that for two expressions $\underline{u},\underline{v}\in W^{\ast}$ we have

\begin{equation}\label{eq-def-monoid-acting-on-roots-2}
  (\underline{u}\underline{v})\circ f=\underline{u}\circ (\underline{v}\circ f).
\end{equation}

Given a column vector $Q=[q_{k1}]_{g\times 1}$ in   $\bcalR^{g}=\bcalR\times\cdots\times\bcalR,$ we define the action of $\ulw$ on $Q$ as
  \begin{equation*}
    \ulw\circ Q=\left[\begin{matrix}
                        \ulw \circ q_{11} \\
                        \ulw \circ q_{21} \\
                        \vdots \\
                        \ulw \circ q_{g1}
                      \end{matrix}\right]
  \end{equation*}

The following definition extends the Demazure operator.

\begin{definition}\label{def-adapted-Demazure-1}
  Let $\underline{w}=s_{a_1}s_{a_2}\cdots s_{a_p}\in W^{\ast},$ for $r=1,\dots,p-1$ let $\ulw_{>r}=s_{a_{r+1}}\cdots s_{a_p}.$ We define the \emph{generalized Demazure operator} $\bpartial_{\underline{w}},$ as follows:

\begin{enumerate}
\item  If $f\in \bcalR$ we write:

\begin{equation*}
  \bpartial_{\ulw}(f)=\left[\begin{array}{ccccc}
                              \partial_{a_1}(\ulw_{>1}\circ f), & \partial_{a_2}(\ulw_{>2}\circ f), & \partial_{a_3}(\ulw_{>3}\circ f), & \cdots
                              & ,\partial_{a_{p}}(f)
                            \end{array}\right]
\end{equation*}
\item Given a column vector $Q=[q_{k1}]_{g\times 1}$ in   $\bcalR^{g}=\bcalR\times\cdots\times\bcalR,$ we write:

\begin{equation*}
  \bpartial_{\ulw}(Q)=\left[\begin{matrix}
                               \bpartial_{\ulw}(q_{11}) \\
                               \bpartial_{\ulw}(q_{21}) \\
                               \vdots\\
                               \bpartial_{\ulw}(q_{g1})
                             \end{matrix}\right]=
  \left[\begin{array}{cccc}
                              \partial_{a_1}(\ulw_{>1}\circ q_{11}), & \partial_{a_2}(\ulw_{>2}\circ q_{11}), & \cdots
                              &,\partial_{a_p}(q_{11}) \\
                              \partial_{a_1}(\ulw_{>1}\circ q_{21}), & \partial_{a_2}(\ulw_{>2}\circ q_{21}), &\cdots &,\partial_{a_p}(q_{21})\\
                              \vdots & \vdots & \vdots & \vdots\\
                             \partial_{a_1}(\ulw_{>1}\circ q_{g1}), & \partial_{a_2}(\ulw_{>2}\circ q_{g1}), & \cdots &,\partial_{a_p}(q_{g1}) \\
                          \end{array}\right].
\end{equation*}
\end{enumerate}

\end{definition}

Note that when we apply $\bpartial_{\underline{w}}$ to an element in $\mathfrak{h}^{\ast}$ we obtain a row vector $[z_1,\dots,z_p]\in \F ^{p}=\F\times \cdots \times\F.$ On the other hand when we apply the operator $\bpartial_{\ulw}$ to a column vector $Q$ with entries in $\mathfrak{h}^{\ast},$ we obtain a matrix with entries in $\F.$

In the next definition, for any expression $\ulw\in W^{\ast},$ we define an object $\calA_{\ulw}$ in the category $\bcalNT:$

\begin{definition}\label{def-nilalg-associted-to-expression}

If $\ulw=s_{a_1}\cdots s_{a_p}\in W^{\ast},$ we define
\begin{enumerate}
\item The strict lower triangular matrix associated to $\ulu$ by:
\begin{equation*}
\mathbf{T}_{\ulw}=\left[\begin{array}{ccccc}
                                   0 & 0 & 0 & \cdots& 0 \\
                                   \partial_{a_1}(\alpha_{a_2}) & 0 & 0 & \cdots& 0 \\
                                   \partial_{a_1}(s_{a_2}\circ\alpha_{a_3}) & \partial_{a_2}(\alpha_{a_3}) & 0 & \cdots& 0 \\
                                   \vdots & \vdots & \ddots & \ddots& \vdots\\
                                   \partial_{a_1}(s_{a_2}\cdots s_{a_{p-1}}\circ \alpha_{a_p}) & \partial_{a_2}(s_{a_3}\cdots s_{a_{p-1}}\circ\alpha_{a_p}) & \cdots& \partial_{a_{p-1}}(\alpha_{a_p}) & 0 \\
                                 \end{array}\right]
\end{equation*}

\item We define the algebra $\calA_{\ulw},$ as the nil graded algebra strictly associated to the matrix $\mathbf{T}_{\ulw}.$ That is $\calA_{\ulw}=\calA(\mathbf{T}_{\ulw}).$
In particular $\calA_{\ulw},$ is the commutative algebra defined by generators $X_1,\dots,X_p,$  and relations:

  \begin{equation*}
    X_1^2=0,\quad X_r^2=\sum_{j<r}\partial_{a_j}(s_{a_{j+1}}\cdots s_{a_{r-1}}\circ \alpha_{a_r})X_jX_r,\quad r=2,\dots p.
  \end{equation*}
\end{enumerate}
\end{definition}

The following lemma shows that we can translate the product $\ulu\ulw$ of two expressions $\ulu,\ulw$ in the monoid $W_{m}^{\ast},$ as a certain $\nabla-$product of the corresponding algebras  $\calA(\ulu)$ and $\calA(\ulw).$

\begin{lemma}\label{lemma-special-nabla-product-for-calA-uw}
  Given two expressions $\ulu,\ulw\in W^{\ast},$ with $\ulu=s_{a_1}\cdots s_{a_p}$ and $\ulw=s_{b_1}\cdots s_{b_l},$
  then we have
  \begin{equation*}
    \calA_{\ulu\ulw}=\calA_{\ulu}\nabla_C\calA_{\ulw}
  \end{equation*}
    where
  \begin{equation*}
   C=\bpartial_{\ulu}(Q_{\ulw})\quad\textrm{and}\quad Q_{\ulw}= \left[\begin{matrix}
                   \alpha_{b_1} \\
                   s_{b_1}\circ \alpha_{b_2} \\
                   s_{b_1}s_{b_2}\circ \alpha_{b_3} \\
                   \vdots\\
                   s_{b_1}\cdots s_{b_{l-1}}\circ \alpha_{b_l}
                 \end{matrix}\right].
  \end{equation*}
\end{lemma}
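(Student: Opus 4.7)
The plan is to prove the identity at the level of matrices, that is, to show that
$$\mathbf{T}_{\ulu\ulw} \;=\; \mathbf{T}_{\ulu}\nabla_C \mathbf{T}_{\ulw},$$
since this equality of triangular matrices immediately yields the stated equality of algebras by Definition \ref{def-nabla-product-for-algebras}. So everything reduces to a block-by-block verification of the entries of the left-hand side.

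First I would fix notation for the concatenation: write $\ulu\ulw = s_{c_1}s_{c_2}\cdots s_{c_{p+l}}$, where $c_i = a_i$ for $1\leq i\leq p$ and $c_{p+k} = b_k$ for $1\leq k\leq l$. Then by Definition \ref{def-nilalg-associted-to-expression} the $(i,j)$-entry of $\mathbf{T}_{\ulu\ulw}$, for $j<i$, is
$$(\mathbf{T}_{\ulu\ulw})_{ij} \;=\; \partial_{c_j}\bigl(s_{c_{j+1}}\cdots s_{c_{i-1}}\circ \alpha_{c_i}\bigr).$$
The goal is to identify this entry with the corresponding entry of the block matrix $\bigl[\begin{smallmatrix}\mathbf{T}_{\ulu}&0\\ C&\mathbf{T}_{\ulw}\end{smallmatrix}\bigr]$ in each of the three nontrivial index regions.

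The top-left block ($i,j\leq p$) is immediate: all the indices $c_{j+1},\dots,c_{i-1},c_i$ are taken from $\ulu$, so the entry coincides with $(\mathbf{T}_{\ulu})_{ij}$ by inspection. The bottom-right block ($i=p+k$, $j=p+r$ with $r<k$) is equally direct: all relevant indices come from $\ulw$, and a reindexing shift yields $(\mathbf{T}_{\ulw})_{kr}$. The main (but still routine) step is the bottom-left block ($i=p+k$, $j\leq p$), where
$$(\mathbf{T}_{\ulu\ulw})_{ij} \;=\; \partial_{a_j}\!\bigl(s_{a_{j+1}}\cdots s_{a_p}\,s_{b_1}\cdots s_{b_{k-1}}\circ \alpha_{b_k}\bigr).$$
Here I would use the factorization of the $W^{\ast}$-action given in \eqref{eq-def-monoid-acting-on-roots-2} to split this as $\partial_{a_j}\bigl(\ulu_{>j}\circ (s_{b_1}\cdots s_{b_{k-1}}\circ \alpha_{b_k})\bigr)$. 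The inner piece $s_{b_1}\cdots s_{b_{k-1}}\circ \alpha_{b_k}$ is exactly the $k$-th entry of the column vector $Q_{\ulw}$, so by Definition \ref{def-adapted-Demazure-1}(2) this expression equals the $(k,j)$-entry of $\bpartial_{\ulu}(Q_{\ulw})=C$, as required.

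The only place where care is needed is in the bottom-left block, because one has to correctly split the product of simple reflections across the boundary between $\ulu$ and $\ulw$ and recognize the result in terms of the generalized Demazure operator applied to a vector. This is the step that the definitions of $Q_{\ulw}$ and $\bpartial_{\ulu}$ have been set up to make transparent, so once the correct identification is made the verification is purely mechanical. After this matrix identity is established, the lemma follows with no further work.
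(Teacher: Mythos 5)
Your proposal is correct and takes essentially the same route as the paper: both reduce the lemma to the block-matrix identity $\mathbf{T}_{\ulu\ulw}=\mathbf{T}_{\ulu}\nabla_C\mathbf{T}_{\ulw}$ and verify it entry by entry, with the bottom-left block recognized via the splitting of the monoid action (equation \ref{eq-def-monoid-acting-on-roots-2}) as the matrix $\bpartial_{\ulu}(Q_{\ulw})$ from Definition \ref{def-adapted-Demazure-1}. The only cosmetic difference is that you organize the check by blocks while the paper lists the rows case by case.
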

The column vector $Q_{\ulw}$ will be called the \emph{column vector associated} to the expression $\ulw.$

\begin{proof}
  By definition $\calA_{\ulu\ulw}$ is the (abstract) nil graded algebra associated to the triangular matrix $\mathbf{T}_{\ulu\ulw}.$ One can check that:

  \begin{equation*}
\mathbf{T}_{\ulu\ulw}=\left[\begin{matrix}
                          \mathbf{T}_{\ulu} & 0 \\
                          \bpartial_{\ulu}(Q_w) & \mathbf{T}_{\ulw}
                        \end{matrix}\right]
                        =\mathbf{T}_{\ulu}\nabla_{\bpartial_{\ulu}(Q_w)}\mathbf{T}_{\ulw}
\end{equation*}
In fact by definition the $k-$th row of the matrix $\mathbf{T}_{\ulu\ulw}$ is given by
\begin{equation*}
  \left\{\begin{matrix}
           [0,\dots, 0] & \quad\textrm{if}\quad k=1\\
           \quad&\quad\\
           [\bpartial_{\ulu_{k-1}}(\alpha_{a_k}),0,\dots,0] & \quad\textrm{if}\quad 1< k\leq p \\
           \quad&\quad\\
           [\bpartial_{\ulu}(\alpha_{b_1}),0,\dots,0] & \quad\textrm{if}\quad k=p+1 \\
           \quad&\quad\\
           [\bpartial_{\ulu}(\ulw_{k-1}\circ\alpha_{b_{k-p}}),
           \bpartial_{\ulw_{k-1}}(\alpha_{b_{k-p}}),\dots,0] & \quad\textrm{if}\quad k>p+1
         \end{matrix}\right.
\end{equation*}
Therefore the algebra $\calA_{\ulu\ulw}$ is (isomorphic to) the nil graded algebra associated to the matrix $\mathbf{T}_{\ulu}\nabla_{C}\mathbf{T}_{\ulw},$ that is
\begin{equation*}
  \calA_{\ulu\ulw}=\calA(\mathbf{T}_{\ulu}\nabla_{C}\mathbf{T}_{\ulw})
=\calA_{\ulu}\nabla_{C}\calA_{\ulw}.
\end{equation*}
The last equation follows from definition \ref{def-nabla-product-for-algebras}.
\end{proof}

\subsection{Gelfand-Tsetlin subalgebras}\label{ssec-the-Jailbreak-alg}

In this section we study the Gelfand-Tsetlin subalgebras of the many endomorphism algebras coming from category $\calD.$

Given any expression $\ulu\in W^{\ast},$  we denote by $\mathbb{Id}_{\ulu}$ the identity element of $\textrm{End}_{\calD}(\ulu),$ that is, the Soergel graph with top and bottom boundary equal to $\ulu,$ and where vertical lines connect the corresponding boundary points. We also denote by $\mathbb{T}({\ulu})$ the Soergel graph with top and bottom boundary equal to $\ulu,$ and where from each boundary point emerges an edge (with the corresponding color) ending in a univalent vertex. Note that $\mathbb{T}({\ulu})$ only has one region. More particularly, given any expression $\ulu=s_{a_1}\cdots s_{a_p}\in W^{\ast},$ and any $1\leq k \leq p,$ we define the element $\mathbb{J}_k(\ulu)\in\textrm{End}_{\calD}(\ulu)$ as follows:
\begin{equation}\label{eq-def-JM-elements}
  \mathbb{J}_k(\ulu)=\mathbb{Id}_{s_{a_1}\cdots s_{a_{k-1}}}\otimes\mathbb{T}(s_k)\otimes \mathbb{Id}_{s_{a_{k+1}}\cdots s_{a_{p}}} .
\end{equation}

For example if $\underline{u}=({\color{blue}a},{\color{red}b},{\color{blue}a},{\color{green}c},{\color{red}b},{\color{blue}a}),$   then the elements $\mathbb{T}(\ulu)$ and $\mathbb{J}_5(\ulu)$ respectively, look like this:

\begin{equation*}
  \includegraphics[scale=0.2]{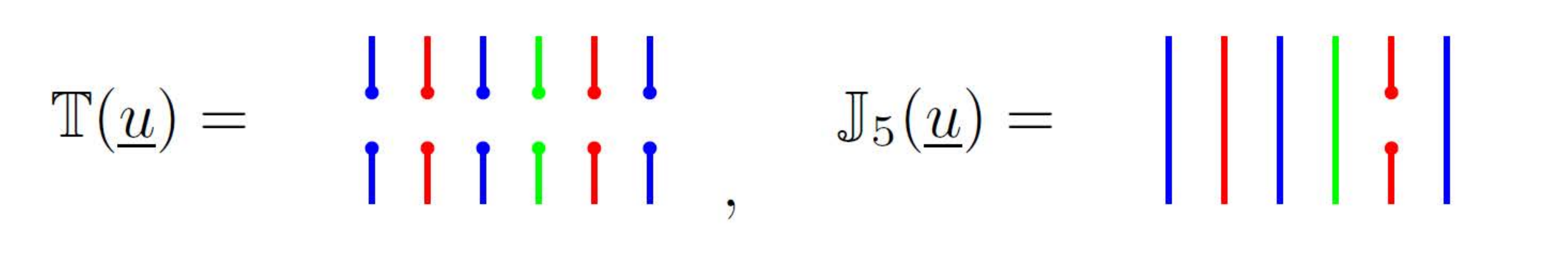}
\end{equation*}

\begin{lemma}\label{lemma-tunel-is-not-zero}
  If $\ulu$ is any expression in $W^{\ast}$, then the element $\mathbb{T}(\ulu)$ is different from zero as morphism of $\calD.$
\end{lemma}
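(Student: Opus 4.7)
The plan is to identify $\mathbb{T}(\underline{u})$ as a basis element of the double light leaves basis $\mathbb{L}\mathbb{L}^{\underline{u}}_{\underline{u}}$ of $\textrm{End}_{\calD}(\underline{u})$, which is the most natural tool at our disposal given the references and framework already cited in the excerpt. The key observation is that $\mathbb{T}(\underline{u})$ factors through the empty expression: writing $\Delta_{\underline{u}}:\underline{u}\to \emptyset$ for the tensor product $\psi_{a_1}\otimes\psi_{a_2}\otimes\cdots\otimes\psi_{a_p}$ of the dot morphisms $\psi_{a_i}:s_{a_i}\to \emptyset$, and analogously $\Delta^{\underline{u}}:\emptyset\to \underline{u}$ for the tensor product of the opposite dots $\phi_{a_i}:\emptyset\to s_{a_i}$, by isotopy one has
\begin{equation*}
\mathbb{T}(\underline{u}) \;=\; \Delta^{\underline{u}}\circ \Delta_{\underline{u}}.
\end{equation*}

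Next, I would invoke the construction of light leaves due to Libedinsky. For the expression $\underline{u}=s_{a_1}\cdots s_{a_p}$, one can choose the $01$-subexpression $\mathbf{e}=(0,0,\dots,0)$, which factors through the identity element $e\in W$ (a reduced expression of $e$ being $\emptyset$). With the standard ``U0'' decoration at every position, the resulting bottom light leaf $LL_{\underline{u},\mathbf{e}}$ from $\underline{u}$ to $\emptyset$ is exactly $\Delta_{\underline{u}}$, and the corresponding top light leaf from $\emptyset$ to $\underline{u}$ is exactly $\Delta^{\underline{u}}$. Hence the double light leaf associated with the pair $(\mathbf{e},\mathbf{e})$ coincides with $\mathbb{T}(\underline{u})$.

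Since by Libedinsky's theorem $\mathbb{L}\mathbb{L}^{\underline{u}}_{\underline{u}}$ is an $\F$-basis of $\textrm{End}_{\calD}(\underline{u})$, each of its elements is nonzero; in particular $\mathbb{T}(\underline{u})\neq 0$.

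The delicate point in this plan is the precise identification of $\Delta_{\underline{u}}$ and $\Delta^{\underline{u}}$ with the Libedinsky light leaves for the trivial subexpression, rather than with a scalar multiple; this requires only unwinding the inductive recipe of \cite{LibLightLeaves} in the trivial case where no ``brick'' or ``cap'' ever appears, leaving just a dot at each step. As an independent sanity check (and to avoid relying on the exact normalization of the light leaves), one may instead apply the Elias--Williamson bimodule realization $F:\calD\to R$-bimod: then $F(\Delta_{\underline{u}})$ is the iterated multiplication map $BS(\underline{u})\to\bcalR$ and $F(\Delta^{\underline{u}})$ is determined by $1\mapsto c\cdot(1\otimes\cdots\otimes 1)$ for a suitable nonzero polynomial factor, so $F(\mathbb{T}(\underline{u}))$ does not vanish on $1\otimes\cdots\otimes 1\in BS(\underline{u})$, and therefore $\mathbb{T}(\underline{u})\neq 0$ in $\textrm{End}_{\calD}(\underline{u})$.
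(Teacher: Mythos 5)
Your proposal is correct and takes essentially the same route as the paper: the paper's proof is a one-line observation that $\mathbb{T}(\ulu)$ is a member of the double leaves basis of $\textrm{End}_{\calD}(\ulu)$ from \cite{LibLightLeaves}, which is exactly your identification of $\mathbb{T}(\ulu)$ with the double light leaf attached to the all-zero subexpression. Your version simply supplies the details (and an optional sanity check via the bimodule functor) that the paper leaves implicit.
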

\begin{proof}
  Is not difficult to see that the element $\mathbb{T}(\ulu)$ of any expression $\ulu$ is always a member of the double leaves basis of $\textrm{End}_{\calD}(\ulu)$ (see \cite{LibLightLeaves}).
\end{proof}

\begin{definition}\label{def-jailbreak-algebras}
  Given any expression $\ulu\in W^{\ast}$ we define the algebra $\bcalJ(\ulu)$ as the subalgebra of $\textrm{End}_{\calD}(\ulu),$ generated by all the elements $\mathbb{J}_k(\ulu).$
\end{definition}

\begin{remark}
  Due to the work of Ryom-Hansen \cite{Steen2} we know that the generators of the algebras $\bcalJ(\ulu)$ are Jucys-Murphy elements relative to the Libedinsky's light leaves basis of the many endomorphism algebras coming from category $\calD$ . Therefore, the algebras $\bcalJ(\ulu)$ are the \emph{Gelfand-Tsetlin} subalgebras of those endomorphism algebras.
\end{remark}

 Is not difficult to see that
\begin{equation}\label{eq-Jucys-commute}
  \mathbb{J}_{k}(\ulu)\mathbb{J}_{i}(\ulu)=\mathbb{J}_{i}(\ulu)\mathbb{J}_{k}(\ulu),
\end{equation}

For example if $m=4,$ and $\underline{u}=({\color{green}c},{\color{red}b},{\color{blue}a},{\color{green}c},{\color{blue}a}),$   then

 \begin{equation*}
  \begin{tikzpicture}[xscale=0.3,yscale=0.3]
  \node[] at (-2,1.5) {$\mathbb{J}_2(\ulu)\mathbb{J}_4(\ulu)=$};



\draw[thick,green] (2,0)--(2,3);

\draw[thick,red] (3,0)--(3,0.5);
\draw[red,fill] (3,0.5) circle [radius=0.1];

\draw[thick,blue] (4,0)--(4,3);

\draw[thick,green] (5,0)--(5,2);
\draw[green,fill] (5,2) circle [radius=0.1];

\draw[thick,blue] (6,0)--(6,3);


\draw[thick,red] (3,1)--(3,3);
\draw[red,fill] (3,1) circle [radius=0.1];

\draw[thick,green] (5,3)--(5,2.5);
\draw[green,fill] (5,2.5) circle [radius=0.1];



\node[] at (7.5,1.5) {$=$};

\draw[thick,green] (2+7,0)--(2+7,3);

\draw[thick,red] (3+7,0)--(3+7,2);
\draw[red,fill] (3+7,2) circle [radius=0.1];

\draw[thick,blue] (4+7,0)--(4+7,3);

\draw[thick,green] (5+7,0)--(5+7,0.5);
\draw[green,fill] (5+7,0.5) circle [radius=0.1];

\draw[thick,blue] (6+7,0)--(6+7,3);


\draw[thick,red] (3+7,2.5)--(3+7,3);
\draw[red,fill] (3+7,2.5) circle [radius=0.1];

\draw[thick,green] (5+7,3)--(5+7,1);
\draw[green,fill] (5+7,1) circle [radius=0.1];


\node[] at (13+5,1.5) {$=\mathbb{J}_4(\ulu)\mathbb{J}_2(\ulu).$};

\end{tikzpicture}
\end{equation*}

Therefore, $\bcalJ(\ulu)$ is a commutative algebra.

Also note that

\begin{equation}\label{eq-Jucys-generate-Tunnel}
  \prod_{k=1}^{l^{\ast}(\ulu)}\mathbb{J}_{k}(\ulu)=\mathbb{T}(\ulu)\neq 0,
\end{equation}


Finally, note that by definition, $\deg(\mathbb{J}_k(\ulu))=2,$ then the algebra $\bcalJ(\ulu)$ is positively graded by even numbers.

\begin{lemma}\label{lemma-action-of-word-ID-graph-on-f}
  Given any expression $\ulu=s_{a_1}\cdots s_{a_p}\in W^{\ast},$ and any homogeneous element $f\in\bcalR$ with $\deg(f)\geq2,$ then the following equation holds in $\calD:$
  \begin{equation}\label{eq-lemma-action-of-word-ID-graph-on-f-modificada}
    \mathbb{Id}_{\ulu}f=\sum_{k=1}^{p}C_k(f)\mathbb{J}_{k}(\ulu),
  \end{equation}
  where $C_k(f)=\partial_{a_k}(s_{a_{k+1}}\cdots s_p \circ f).$
\end{lemma}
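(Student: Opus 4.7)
The plan is to prove the identity by induction on $p = l^{\ast}(\ulu)$, relying on the standard single-strand polynomial forcing relation of Soergel calculus, namely
$$\mathbb{Id}_{s_a}\cdot f \;=\; (f\cdot s_a)\cdot \mathbb{Id}_{s_a} \;+\; \partial_a(f)\,\mathbb{T}(s_a),$$
valid for every $a\in J$ and every $f\in\bcalR$. This identity follows by combining the decomposition $f = f\cdot s_a + \alpha_a\,\partial_a(f)$ in $\bcalR$ with the polynomial relation \ref{eq-polynomial-relation-1} (the barbell equals $\alpha_a$), together with the $\bcalR$-bimodule structure of $\operatorname{End}_{\calD}(s_a)$. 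Geometrically, it expresses the sliding of $f$ from the right of a strand to its left, at the cost of applying $s_a$ to $f$ and producing a barbell correction with coefficient $\partial_a(f)$.

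For the base case $p=1$, the sliding relation yields $\mathbb{Id}_{s_{a_1}}\cdot f = (f\cdot s_{a_1})\cdot \mathbb{Id}_{s_{a_1}} + \partial_{a_1}(f)\,\mathbb{T}(s_{a_1})$; since $\deg(f\cdot s_{a_1}) = \deg(f) \geq 2 > 0$, the left-annihilation relation \ref{eq-left-annihilation} kills the first summand, and since $\mathbb{T}(s_{a_1}) = \mathbb{J}_1(\ulu)$ with $C_1(f) = \partial_{a_1}(f)$ (the product $s_{a_2}\cdots s_{a_1}$ being empty), the formula reduces to $C_1(f)\mathbb{J}_1(\ulu)$, as required. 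For the inductive step, decompose $\ulu = \ulu'\cdot s_{a_p}$ with $\ulu' = s_{a_1}\cdots s_{a_{p-1}}$, so that $\mathbb{Id}_{\ulu} = \mathbb{Id}_{\ulu'}\otimes \mathbb{Id}_{s_{a_p}}$. Applying the sliding relation to the rightmost strand, and recognising $\mathbb{Id}_{\ulu'}\otimes \mathbb{T}(s_{a_p}) = \mathbb{J}_p(\ulu)$, one obtains
$$\mathbb{Id}_{\ulu} \cdot f \;=\; \bigl(\mathbb{Id}_{\ulu'}\cdot (f\cdot s_{a_p})\bigr) \otimes \mathbb{Id}_{s_{a_p}} \;+\; \partial_{a_p}(f)\,\mathbb{J}_p(\ulu),$$
where the second term equals $C_p(f)\,\mathbb{J}_p(\ulu)$. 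Since $\deg(f\cdot s_{a_p}) = \deg(f) \geq 2$, the inductive hypothesis applied to $\ulu'$ with the polynomial $f\cdot s_{a_p}$ yields
$$\mathbb{Id}_{\ulu'}\cdot(f\cdot s_{a_p}) \;=\; \sum_{k=1}^{p-1}\partial_{a_k}\bigl(s_{a_{k+1}}\cdots s_{a_{p-1}}\circ(f\cdot s_{a_p})\bigr)\,\mathbb{J}_k(\ulu').$$
Using the convention $\ulw\circ g = g\cdot w^{-1}$, each coefficient equals $\partial_{a_k}(f\cdot s_{a_p}\cdots s_{a_{k+1}}) = C_k(f)$; moreover, tensoring each $\mathbb{J}_k(\ulu')$ on the right with $\mathbb{Id}_{s_{a_p}}$ produces $\mathbb{J}_k(\ulu)$. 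Combining with the $k = p$ contribution completes the sum $\sum_{k=1}^{p} C_k(f)\,\mathbb{J}_k(\ulu)$, as claimed.

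The principal delicate point is the rigorous justification of the single-strand sliding relation and the careful tracking of the region in which each polynomial is placed after each slide; once this is accepted as a direct consequence of the Soergel calculus relations in \cite{EW}, the induction reduces to straightforward bookkeeping of the twisting of $f$ by the subwords $s_{a_p}\cdots s_{a_{k+1}}$.
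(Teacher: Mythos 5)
Your proof is correct and follows essentially the same route as the paper: induction on $l^{\ast}(\ulu)$, applying the polynomial forcing relation \ref{eq-polynomial-relation-2} to the rightmost strand and invoking relation \ref{eq-left-annihilation} to discard the slid-through polynomial term. The only (immaterial) structural difference is that the paper first establishes the intermediate identity $\mathbb{Id}_{\ulu}f=(\ulu\circ f)\mathbb{Id}_{\ulu}+\sum_{k}C_k(f)\mathbb{J}_{k}(\ulu)$ by induction and annihilates the leading term once at the end, whereas you fold the annihilation into every stage of the induction, which works because $\deg(f\cdot s_{a_p})=\deg(f)\geq 2$ preserves the hypothesis.
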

\begin{proof}
  We use induction on $p=l^{\ast}(\ulu)$ to prove first that
   \begin{equation}\label{eq-lemma-action-of-word-ID-graph-on-f}
    \mathbb{Id}_{\ulu}f=(\ulu\circ f)\mathbb{Id}_{\ulu}+\sum_{k=1}^{p}C_k(f)\mathbb{J}_{k}(\ulu),
  \end{equation}

  If $p=1$ then equation \ref{eq-lemma-action-of-word-ID-graph-on-f} looks like:
  \begin{equation*}
    \mathbb{Id}_{s_{a_1}}f=(s_{a_1}\circ f) \mathbb{Id}_{s_{a_1}}+\partial_{a_1}(f)\mathbb{J}_{1}(s_{a_1}),
  \end{equation*}
  and this follows directly from relation \ref{eq-polynomial-relation-2}. If $p>1$ then we can apply relation \ref{eq-polynomial-relation-2} to obtain
  \begin{equation*}
    \mathbb{Id}_{\ulu}f=\calS+\partial_{a_p}(f)\mathbb{J}_{p}(\ulu),
  \end{equation*}
  where $\calS=(\mathbb{Id}_{\ulu_{p-1}}s_{a_p}\circ f)\otimes \mathbb{Id}_{s_{a_p}}$

   \begin{equation*}
  \begin{tikzpicture}[xscale=0.5,yscale=0.4]
  \node[] at (-5,1.5) {$\calS=$};
  \draw[thick] (-4,0)--(-4,3);
  \node[below] at (-4,0) {$a_1$};
  \node[] at (-2.5,1.5) {$\cdots$};
  \draw[thick] (-1,0)--(-1,3);
  \node[below] at (-1,0) {$a_{p-1}$};

 \node[] at (1,1.5) {$s_{a_p}\circ f$};
 \draw[thin] (-0.2,2)--(2.2,2);
 \draw[thin] (-0.2,1)--(2.2,1);
 \draw[thin] (-0.2,2)--(-0.2,1);
 \draw[thin] (2.2,1)--(2.2,2);

\draw[thick] (3,0)--(3,3);
\node[below] at (3,0) {$a_p$};

\end{tikzpicture}
\end{equation*}
Now by induction
\begin{equation*}
    \mathbb{Id}_{\ulu_{p-1}}(s_{a_p}\circ f)=\ulu_{p-1}\circ (s_{a_p}\circ f)\mathbb{Id}_{\ulu_{p-1}}+\sum_{k=1}^{p-1}C_k(s_{a_p}\circ f)\mathbb{J}_{k}(\ulu_{p-1}),
  \end{equation*}
 where
 $$C_k(s_{a_p}\circ f)=\partial_{a_k}(s_{a_{k+1}}\cdots s_{p-1}\circ(s_{a_p}\circ f))=\partial_{a_k}(s_{a_{k+1}}\cdots s_{p-1}s_{a_p}\circ f).$$

Finally relation \ref{eq-left-annihilation} implies that
$   \mathbb{Id}_{\ulu}f=\sum_{k=1}^{p}C_k(f)\mathbb{J}_{k}(\ulu)$ as desired.
\end{proof}

\begin{theorem}\label{theo-Ju-isomorphic-to-ATu}
  The algebra $\bcalJ(\ulu)$ is isomorphic to the nil-graded algebra $\calA_{\ulu}.$ In particular the algebra $\bcalJ({\ulu}),$ is the abstract commutative algebra given by generators $\mathbb{J}_1,\dots,\mathbb{J}_l,$ ($l=l^{\ast}(\ulu)$ ) and relations:

  \begin{equation}\label{eq-theo-Ju-isomorphic-to-ATu}
    \mathbb{J}_1^2=0,\quad \mathbb{J}_r^2=\sum_{j<r}\partial_{a_j}(s_{a_{j+1}}\cdots s_{a_{r-1}}\circ \alpha_{a_r})\mathbb{J}_j\mathbb{J}_r,\quad r=2,\dots l.
  \end{equation}
\end{theorem}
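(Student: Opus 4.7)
The plan is to construct a surjective graded algebra homomorphism $\phi:\calA_{\ulu}\rightarrow\bcalJ(\ulu)$ sending each generator $X_k$ of $\calA_{\ulu}$ to $\mathbb{J}_k(\ulu)$, and then to upgrade this surjection to an isomorphism via the dimension criterion provided by Lemma \ref{lemma-monomial-basis}. Since the elements $\mathbb{J}_k(\ulu)$ commute (by equation \ref{eq-Jucys-commute}) and they generate $\bcalJ(\ulu)$ by Definition \ref{def-jailbreak-algebras}, only the defining relations of $\calA_{\ulu}$ need to be verified on the $\mathbb{J}_k(\ulu)$'s in order to have the homomorphism $\phi$ well defined. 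Since all generators have degree $2$, $\phi$ is automatically preserving degree.

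The heart of the argument is therefore the verification of
\[
\mathbb{J}_k(\ulu)^2 \;=\; \sum_{j<k}\partial_{a_j}\bigl(s_{a_{j+1}}\cdots s_{a_{k-1}}\circ\alpha_{a_k}\bigr)\,\mathbb{J}_j(\ulu)\mathbb{J}_k(\ulu),\qquad k=1,\ldots,l.
\]
Diagrammatically, the composition $\mathbb{J}_k(\ulu)\cdot\mathbb{J}_k(\ulu)$ produces, at column $k$, a barbell formed between the top dot of the bottom factor and the bottom dot of the top factor (joined into a single edge with dots on each end). By the barbell relation \ref{eq-polynomial-relation-1}, this barbell evaluates to $\alpha_{a_k}$ placed in the region to the right of column $k-1$, and the remaining diagram is precisely $\mathbb{J}_k(\ulu)$. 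Being careful with how the regions merge (column $k$ is broken, so the slot on either side of it is one region), this identifies
\[
\mathbb{J}_k(\ulu)^2 \;=\; \bigl[\mathbb{Id}_{s_{a_1}\cdots s_{a_{k-1}}}\alpha_{a_k}\bigr]\otimes\bigl[\mathbb{T}(s_{a_k})\otimes\mathbb{Id}_{s_{a_{k+1}}\cdots s_{a_l}}\bigr].
\]
Now I would apply Lemma \ref{lemma-action-of-word-ID-graph-on-f} to the truncated expression $s_{a_1}\cdots s_{a_{k-1}}$ and the homogeneous polynomial $f=\alpha_{a_k}$; its coefficients match exactly the entries of $\mathbf{T}_{\ulu}$ coming from column $k$. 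Tensoring the resulting identity with $\mathbb{T}(s_{a_k})\otimes\mathbb{Id}_{s_{a_{k+1}}\cdots s_{a_l}}$ and using the interchange law, each term $\mathbb{J}_j(s_{a_1}\cdots s_{a_{k-1}})\otimes\mathbb{T}(s_{a_k})\otimes\mathbb{Id}_{s_{a_{k+1}}\cdots s_{a_l}}$ on the right is seen to coincide with $\mathbb{J}_j(\ulu)\mathbb{J}_k(\ulu)$, yielding the desired relation. The case $k=1$ specializes correctly, since the empty sum gives $\mathbb{J}_1(\ulu)^2=0$ (the barbell deposits $\alpha_{a_1}$ in the leftmost region, which is killed by \ref{eq-left-annihilation}).

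To finish I would observe that $\bcalJ(\ulu)$ is a commutative graded $\F$-algebra whose generators $\{\mathbb{J}_k(\ulu)\}$ satisfy the relations of $\calA_{\ulu}$, so $\bcalJ(\ulu)$ is weakly associated to $\mathbf{T}_{\ulu}$ in the sense of Definition \ref{def-nilalgebra-weakly-associated-to-T}. By equation \ref{eq-Jucys-generate-Tunnel} the monomial $\mathbb{J}_1(\ulu)\mathbb{J}_2(\ulu)\cdots\mathbb{J}_l(\ulu)$ equals $\mathbb{T}(\ulu)$, which is nonzero by Lemma \ref{lemma-tunel-is-not-zero}. Lemma \ref{lemma-monomial-basis} then promotes $\bcalJ(\ulu)$ to being strongly associated to $\mathbf{T}_{\ulu}$, so $\dim\bcalJ(\ulu)=2^l=\dim\calA_{\ulu}$ by Theorem \ref{theo-all-AT-is-strongly}; hence the surjective morphism $\phi$ must be an isomorphism, and the explicit presentation \ref{eq-theo-Ju-isomorphic-to-ATu} follows. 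The step I expect to be the most delicate is the diagrammatic identification of $\mathbb{J}_k(\ulu)^2$ with the tensor product $[\mathbb{Id}_{s_{a_1}\cdots s_{a_{k-1}}}\alpha_{a_k}]\otimes[\mathbb{T}(s_{a_k})\otimes\mathbb{Id}_{s_{a_{k+1}}\cdots s_{a_l}}]$: one must verify that the region in which the barbell relation deposits $\alpha_{a_k}$ matches the region from which Lemma \ref{lemma-action-of-word-ID-graph-on-f} begins its polynomial sliding.
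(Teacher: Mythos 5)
Your proposal is correct and follows essentially the same route as the paper's proof: compute $\mathbb{J}_k(\ulu)^2$ via the barbell relation as $(\mathbb{Id}_{\ulu_{k-1}}\alpha_{a_k})\otimes\mathbb{T}(s_{a_k})\otimes\mathbb{Id}_{\ulu_{>k}}$, expand the polynomial with Lemma \ref{lemma-action-of-word-ID-graph-on-f} to see that $\bcalJ(\ulu)$ is weakly associated to $\mathbf{T}_{\ulu}$, and then upgrade to an isomorphism using equation \ref{eq-Jucys-generate-Tunnel}, Lemma \ref{lemma-tunel-is-not-zero} and the criterion of Lemma \ref{lemma-monomial-basis}. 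The delicate region-matching step you flag at the end is exactly the point the paper handles by the displayed diagrammatic computation.
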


\begin{proof}
Let $\ulu=s_{a_1}\cdots s_{a_p}\in W^{\ast}.$

  First note that by the diagrammatic relations of the category $\calD,$ we have that
  \begin{equation*}
    (\mathbb{J}_{k}(\ulu))^{2}=(\mathbb{Id}_{\ulu_{k-1}}\alpha_{a_k})\otimes \mathbb{T}(s_{a_k})\otimes\mathbb{Id}_{\ulu_{>k}}
  \end{equation*}
  where $\ulu_{>k}=s_{a_{k+1}}\cdots s_{a_p}:$

  \begin{equation*}
  \begin{tikzpicture}[xscale=0.4,yscale=0.25]
  \node[] at (-7,0) {$(\mathbb{J}_{k}(\ulu))^{2}=$};
  \draw[thick] (-4,-3)--(-4,3);
  \node[below] at (-4,-3) {$a_1$};
  \node[] at (-2.5,0) {$\cdots$};
  \draw[thick] (-1,-3)--(-1,3);
  \node[below] at (-1,-3) {${ a_{k-1}}$};

 \node[below] at (1,-3) {${\color{blue}a_{k}}$};
\draw[thick,blue] (1,0)--(1,1);
\draw[fill,blue] (1,1) circle [radius=0.1];
\draw[thick,blue] (1,3)--(1,2);
\draw[fill,blue] (1,2) circle [radius=0.1];
\draw[thick,blue] (1,0)--(1,-1);
\draw[fill,blue] (1,-1) circle [radius=0.1];
\draw[thick,blue] (1,-3)--(1,-2);
\draw[fill,blue] (1,-2) circle [radius=0.1];

\draw[thick] (3,-3)--(3,3);
\node[below] at (3,-3) {$a_{k+1}$};
\node[] at (4,0) {$\cdots$};
\node[below] at (5.5,-3) {$a_{p}$};
\draw[thick] (5.5,-3)--(5.5,3);

\node[] at (6.7,0) {$=$};


\draw[thick] (-4+12,-3)--(-4+12,3);
  \node[below] at (-4+12,-3) {$a_1$};
  \node[] at (-2.5+12,0) {$\cdots$};
  \draw[thick] (-1+12,-3)--(-1+12,3);
  \node[below] at (-1+12,-3) {$a_{k-1}$};

 \node[] at (1+12,0) {$\alpha_{{\color{blue}a_k}}$};
 \draw[thin] (-0.2+12,-1)--(2.2+12,-1);
 \draw[thin] (-0.2+12,1)--(2.2+12,1);
 \draw[thin] (-0.2+12,-1)--(-0.2+12,1);
 \draw[thin] (2.2+12,1)--(2.2+12,-1);
 \node[below] at (1+12,-3) {${\color{blue}a_{k}}$};
\draw[thick,blue] (1+12,3)--(1+12,2);
\draw[fill,blue] (1+12,2) circle [radius=0.1];
\draw[thick,blue] (1+12,-3)--(1+12,-2);
\draw[fill,blue] (1+12,-2) circle [radius=0.1];

\draw[thick] (3+12,-3)--(3+12,3);
\node[below] at (3+12,-3) {$a_{k+1}$};
\node[] at (4+12,0) {$\cdots$};
\node[below] at (5.5+12,-3) {$a_{p}$};
\draw[thick] (5.5+12,-3)--(5.5+12,3);
\end{tikzpicture}
\end{equation*}
  In particular, if $k=1$ we have $(\mathbb{J}_{k}(\ulu))^{2}=0.$

If $k>1$ we have:
\begin{equation*}
    (\mathbb{J}_{k}(\ulu))^{2}=(\sum_{i<k}C_i\mathbb{J}_{i}(\ulu_{k-1}))\otimes \mathbb{T}(s_{a_k})\otimes\mathbb{Id}_{\ulu_{>k}}
    =\sum_{i<k}C_i\mathbb{J}_{i}(\ulu)\mathbb{J}_{k}(\ulu).
  \end{equation*}
  where $C_i=\partial_{a_i}(s_{a_{i+1}}\cdots s_{a_{k-1}}\circ \alpha_{a_k})$ and they are exactly the first $k-1$ coefficients in the $k^{th}$ row of the matrix $\mathbf{T}_{\ulu}$ (the other are zeros). The last calculus, together with equation \ref{eq-Jucys-commute}, imply that the algebra $\bcalJ(\ulu)$ is a commutative nil graded algebra  weakly associated to the matrix $\mathbf{T}_{\ulu}.$ The conclusion of the theorem comes from equation \ref{eq-Jucys-generate-Tunnel} and lemma \ref{lemma-tunel-is-not-zero} and lemma \ref{lemma-monomial-basis}.
\end{proof}

  Theorem \ref{theo-Ju-isomorphic-to-ATu} asserts that the relations \ref{eq-theo-Ju-isomorphic-to-ATu} coming from the local relations in the definition of the category $\calD,$ not only are true, they are enough to provide an explicit presentation of the algebras $\bcalJ(\ulu).$  In particular, as a direct consequence of theorem \ref{theo-Ju-isomorphic-to-ATu}  we have that the set $\{\mathbb{J}_{1}^{a_1}\cdots\mathbb{J}_{l}^{a_l}:a_j\in\{0,1\}\}$ is a basis of the algebra $\bcalJ(\ulu),$ and then $\dim(\bcalJ(\ulu))=2^{l},$ where $l=l^{\ast}(\ulu).$

\begin{corollary}\label{coro-theo-Ju-isomorphic-to-ATu}
  Given two expressions $\ulu,\ulw\in W^{\ast}$ we have that
  \begin{equation*}
    \bcalJ(\ulu\ulw)=\bcalJ(\ulu)\nabla_C\bcalJ(\ulw)
  \end{equation*}
  where $C=\bpartial_{\ulu}(Q_{\ulw}),$ and $Q_{\ulw}$ is the column vector associated to the expression $\ulw.$
\end{corollary}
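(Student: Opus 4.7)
The plan is to string together the two main results already in hand: Theorem \ref{theo-Ju-isomorphic-to-ATu}, which identifies $\bcalJ(\ulv)$ with the abstract algebra $\calA_{\ulv}$ for any expression $\ulv\in W^{\ast}$, and Lemma \ref{lemma-special-nabla-product-for-calA-uw}, which computes $\calA_{\ulu\ulw}$ as the $\nabla_C$-product $\calA_{\ulu}\nabla_C \calA_{\ulw}$ with precisely the matrix $C=\bpartial_{\ulu}(Q_{\ulw})$ appearing in the statement.

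First I would apply Theorem \ref{theo-Ju-isomorphic-to-ATu} to the concatenated expression $\ulu\ulw$, obtaining an isomorphism
\[
\bcalJ(\ulu\ulw)\;\cong\;\calA_{\ulu\ulw}
\]
in $\bcalNGAT$ which sends the Jucys--Murphy generator $\mathbb{J}_k(\ulu\ulw)$ to the abstract generator $X_k$ of $\calA_{\ulu\ulw}=\calA(\mathbf{T}_{\ulu\ulw})$. Then I would invoke Lemma \ref{lemma-special-nabla-product-for-calA-uw} to rewrite the right-hand side as
\[
\calA_{\ulu\ulw}\;=\;\calA_{\ulu}\nabla_{C}\calA_{\ulw},\qquad C=\bpartial_{\ulu}(Q_{\ulw}).
\]
Finally, applying Theorem \ref{theo-Ju-isomorphic-to-ATu} again to $\ulu$ and to $\ulw$ separately yields $\calA_{\ulu}\cong \bcalJ(\ulu)$ and $\calA_{\ulw}\cong \bcalJ(\ulw)$. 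Chaining these isomorphisms gives the claimed identification $\bcalJ(\ulu\ulw)=\bcalJ(\ulu)\nabla_C\bcalJ(\ulw)$.

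The only subtlety, and the one point deserving explicit attention, is that the $\nabla_C$-product is defined at the level of abstract algebras associated to matrices, so one needs to know it is compatible with the isomorphisms produced by Theorem \ref{theo-Ju-isomorphic-to-ATu}. This is automatic: since any algebra strongly associated to a strictly lower triangular matrix $T$ is (by Theorem \ref{theo-all-AT-is-strongly} together with Lemma \ref{lemma-monomial-basis}) isomorphic in $\bcalNGAT$ to $\calA(T)$ via the preserving degree map sending the respective generators to $X_1,\dots,X_n$, the three isomorphisms above can be assembled into a single preserving degree algebra isomorphism that carries the generators $\mathbb{J}_1(\ulu\ulw),\dots,\mathbb{J}_{p+l}(\ulu\ulw)$ to the generators of $\bcalJ(\ulu)\nabla_{C}\bcalJ(\ulw)$ coming from the two natural maps of Lemmas \ref{lemma-incrustation-AT-into-ATnablaAS} and \ref{lemma-quotient-ATnablaAS-over-AS}.

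No step here is genuinely difficult; the main conceptual content has already been developed. The only thing one has to be careful about is the bookkeeping identifying the matrix $\mathbf{T}_{\ulu\ulw}$ with $\mathbf{T}_{\ulu}\nabla_{\bpartial_{\ulu}(Q_{\ulw})}\mathbf{T}_{\ulw}$, and this is precisely the computation already carried out in the proof of Lemma \ref{lemma-special-nabla-product-for-calA-uw}. Hence the proof of the corollary reduces to a one-line invocation of the two results.
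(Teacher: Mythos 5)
Your proof is correct and follows exactly the route of the paper, which also deduces the corollary directly from Theorem \ref{theo-Ju-isomorphic-to-ATu} and Lemma \ref{lemma-special-nabla-product-for-calA-uw}. The extra paragraph about compatibility of the $\nabla_C$-product with the identifications is a reasonable elaboration of what the paper leaves implicit, but it is not a different argument.
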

\begin{proof}
  Is a direct consequence of theorem \ref{theo-Ju-isomorphic-to-ATu} and lemma \ref{lemma-special-nabla-product-for-calA-uw}.
\end{proof}

\subsection{Explicit calculus in type $\widetilde{A}$}\label{ssec-explicit-for-type-A}
In this section we develop an algorithm, based on Theorem \ref{theo-Ju-isomorphic-to-ATu},  to obtain explicit presentations for the many Gelfand-Tsetlin subalgebras $\bcalJ(\ulu),$ in the case of type $\widetilde{A}.$ By theorems \ref{theo-Ju-isomorphic-to-ATu} to obtain a presentation for the algebra $\bcalJ(\ulu),$ is equivalent to calculate the associated matrix $\mathbf{T}_{\ulu}.$
On the other hand by corollary \ref{coro-theo-Ju-isomorphic-to-ATu}, we can concentrate in to calculate explicitly the matrix $\mathbf{T}_{\ulw}$ for certain \emph{generating} expressions $\ulw$ and then to describe how we can assemble them via an appropriate $\nabla-$product, to obtain any other  matrix $\mathbf{T}_{\ulu}.$

Let $(W_m,S_m)$ the Coxeter system of type $\widetilde{A}_{m-1}$. We consider the \emph{geometric realization} of $(W_m,S_m),$  defined by:a

\begin{equation}\label{definition-action-root-coroot}
  \langle \alpha_a,\alpha^{\vee}_b \rangle=-2\cos(\pi/{\mathbf{m}(a,b)})
\end{equation}





In this case, the action of $W_{m}^{\ast}$ and $W_m$ on $\bcalR=S(\mathfrak{h}^{\ast})$ is given by:
\begin{equation}\label{definition-dual-geometric-representation-TA}
  s_{a}\circ \alpha_b=\alpha_b\cdot s_a=\left\{\begin{array}{cc}
                                             -\alpha_b & \quad\textrm{if}\quad a=b \\
                                             \alpha_a+\alpha_b & \quad\textrm{if}\quad a=b\pm 1 \\
                                             \alpha_b & \quad\textrm{otherwise}
                                           \end{array}\right.
\end{equation}

And the Demazure operator by

\begin{equation}\label{Demazure-type-A-tilde-m}
  \partial_{a}(\alpha_b)=\left\{\begin{array}{cc}
                                             2 & \quad\textrm{if}\quad a=b \\
                                             -1 & \quad\textrm{if}\quad a=b\pm 1 \\
                                             0 & \quad\textrm{otherwise}
                                           \end{array}\right.
\end{equation}

Note that:

\begin{equation}\label{eq-sum-of-roots-is-zero}
  \sum_{a\in I_m}{\alpha_a}=0.
\end{equation}

Given $a, b\in I_m,$ we define the intervals $\lfloor a,b\rfloor,\lceil a,b\rceil\in W_m^{\ast}$, as follows:

 \begin{equation}\label{eq-up-interval-coxeter}
   \lfloor a,b\rfloor=\left\{\begin{array}{cc}
                               s_{a}s_{a+1}\cdots s_{b} & \quad \textrm{if}\quad b\neq a \\
                               s_a & \quad \textrm{if}\quad b= a
                             \end{array} \right. \quad \textrm{and}\quad
   \lceil a,b\rceil=\left\{\begin{array}{cc}
                                s_{a}s_{a-1}\cdots s_{b} & \quad \textrm{if}\quad b\neq a \\
                               s_a & \quad \textrm{if}\quad b= a
                             \end{array} \right.
 \end{equation}

We also define the elements $\balpha(\lfloor a, b \rfloor),\balpha(\lceil a, b \rceil)\in \bcalR$ as follows:

\begin{equation}\label{def-notation-alpha-intervals}
\begin{array}{c}
 \balpha(\lfloor a, b \rfloor)=\alpha_{a}+\alpha_{a+1}+\cdots +\alpha_{b} \quad \textrm{and}\quad
  \balpha(\lceil a, b \rceil)=\alpha_{a}+\alpha_{a-1}+\cdots +\alpha_{b}
 \end{array}
\end{equation}

In particular, by equation \ref{eq-sum-of-roots-is-zero}, we have

\begin{equation}\label{eq-sum-of-roots-is-zero2}
  \balpha(\lfloor a, a-1 \rfloor)=\balpha(\lceil a, a+1 \rceil)=0.
\end{equation}


In order to optimize our calculus, we will introduce a new concept, the \emph{extended matrix} associated to $\ulu$ and \emph{the special  $\bnabla-$product} for extended matrices, as follows:


\begin{definition}\label{def-extended-Tmatrix-associated-to-expression}
Given two expressions $\ulu,\ulw\in W^{\ast}_m,$
\begin{enumerate}
\item We define the \emph{extended matrix} $\widetilde{\mathbf{T}}_{\ulu}$ by:
\begin{equation*}
  \widetilde{\mathbf{T}}_{\ulu}=[Q_{\ulu},\mathbf{T}_{\ulu}].
\end{equation*}
where $Q_{\ulu}$ is the column vector associated to $\ulu$ (see lemma \ref{lemma-special-nabla-product-for-calA-uw}) and $\mathbf{T}_{\ulu}$ is the strict lower triangular matrix associated to $\ulu.$
\item We define:
  \begin{equation*}
    \widetilde{\mathbf{T}}_{\ulu}\bnabla \widetilde{\mathbf{T}}_{\ulw}
  =[Q^{\ulu}_{\ulw},\mathbf{T}_{\ulu}\bnabla \mathbf{T}_{\ulw}].
  \end{equation*}
where
\begin{equation*}
 \mathbf{T}_{\ulu}\bnabla \mathbf{T}_{\ulw}=\mathbf{T}_{\ulu}\nabla_{\bCpartial_{\ulu}(Q_{\ulw})} \mathbf{T}_{\ulw}
 \quad\textrm{and}\quad Q^{\ulu}_{\ulw}=\left[\begin{matrix}
             Q_{\ulu}  \\
            \ulu \circ Q_{\ulw}
          \end{matrix}\right]
\end{equation*}
Therefore
\begin{equation*}
 \widetilde{\mathbf{T}}_{\ulu}\bnabla \widetilde{\mathbf{T}}_{\ulw}= [Q_{\ulu},{\mathbf{T}}_{\ulu}]\bnabla[Q_{\ulw},{\mathbf{T}}_{\ulw}]=
    \left[\begin{matrix}
             Q_{\ulu} & \mathbf{T}_{\ulu} & 0 \\
            \ulu \circ Q_{\ulw} & \bpartial_{\ulu}(Q_{\ulw}) & {\mathbf{T}}_{\ulw}\end{matrix}\right].
\end{equation*}
\end{enumerate}
\end{definition}

\begin{lemma}\label{lemma-associativity-bnabla-product}
  Given three expressions $\ulu,\ulv,\ulw\in W^{\ast}_m,$ we have that
  \begin{equation*}
    (\widetilde{\mathbf{T}}_{\ulu}\bnabla \widetilde{\mathbf{T}}_{\ulv})\bnabla\widetilde{\mathbf{T}}_{\ulw}=
    \widetilde{\mathbf{T}}_{\ulu}\bnabla (\widetilde{\mathbf{T}}_{\ulv}\bnabla\widetilde{\mathbf{T}}_{\ulw})
  \end{equation*}
\end{lemma}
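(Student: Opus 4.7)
The plan is to reduce the statement to the associativity of concatenation in the free monoid $W_m^{\ast}$ via the auxiliary identity
\[
\widetilde{\mathbf{T}}_{\ulu}\bnabla \widetilde{\mathbf{T}}_{\ulv}=\widetilde{\mathbf{T}}_{\ulu\ulv}
\]
valid for every pair of expressions. Once this is established, both sides of the claimed equality collapse to $\widetilde{\mathbf{T}}_{\ulu\ulv\ulw}$, and the lemma follows formally.

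First, I would verify the auxiliary identity by unpacking both sides and comparing their column-vector parts and their strictly lower triangular parts separately. On the left, the definition of $\bnabla$ gives directly the block form with column vector $\begin{bmatrix}Q_{\ulu}\\ \ulu\circ Q_{\ulv}\end{bmatrix}$ and triangular part $\mathbf{T}_{\ulu}\nabla_{\bpartial_{\ulu}(Q_{\ulv})}\mathbf{T}_{\ulv}$. On the right, writing $\ulu=s_{a_1}\cdots s_{a_p}$ and $\ulv=s_{b_1}\cdots s_{b_q}$, the column vector $Q_{\ulu\ulv}$ has entries $s_{a_1}\cdots s_{a_{k-1}}\circ\alpha_{a_k}$ for $k\leq p$, which is exactly $Q_{\ulu}$, and $s_{a_1}\cdots s_{a_p}s_{b_1}\cdots s_{b_{j-1}}\circ\alpha_{b_j}$ for the last $q$ positions. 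By equation \ref{eq-def-monoid-acting-on-roots-2} the latter equal $\ulu\circ(s_{b_1}\cdots s_{b_{j-1}}\circ\alpha_{b_j})$, which is precisely the $j$-th entry of $\ulu\circ Q_{\ulv}$. The equality of the triangular parts is exactly the content of Lemma \ref{lemma-special-nabla-product-for-calA-uw}, so the auxiliary identity holds.

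The associativity of $\bnabla$ now becomes formal: applying the auxiliary identity twice on each side,
\[
(\widetilde{\mathbf{T}}_{\ulu}\bnabla\widetilde{\mathbf{T}}_{\ulv})\bnabla\widetilde{\mathbf{T}}_{\ulw}
=\widetilde{\mathbf{T}}_{\ulu\ulv}\bnabla\widetilde{\mathbf{T}}_{\ulw}
=\widetilde{\mathbf{T}}_{(\ulu\ulv)\ulw}
=\widetilde{\mathbf{T}}_{\ulu(\ulv\ulw)}
=\widetilde{\mathbf{T}}_{\ulu}\bnabla\widetilde{\mathbf{T}}_{\ulv\ulw}
=\widetilde{\mathbf{T}}_{\ulu}\bnabla(\widetilde{\mathbf{T}}_{\ulv}\bnabla\widetilde{\mathbf{T}}_{\ulw}),
\]
where the middle step uses only the associativity of concatenation in the free monoid $W_m^{\ast}$.

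The main obstacle, though modest, is the bookkeeping in the auxiliary identity: one must carefully track how the entries of $Q_{\ulu\ulv}$ partition into the $Q_{\ulu}$ block and the $\ulu\circ Q_{\ulv}$ block, relying crucially on the associativity of the monoid action on $\bcalR$. An alternative route would be a direct block-matrix computation invoking Lemma \ref{lemma-associativity-nabla-prod-matrix}, but the auxiliary-identity approach is cleaner and isolates Lemma \ref{lemma-special-nabla-product-for-calA-uw} as the only genuinely nontrivial ingredient.
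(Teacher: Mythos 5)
Your proposal is correct, and it takes a somewhat different route from the paper's. The paper proves the identity by splitting both sides into their two blocks and comparing: the column-vector part is handled by checking $Q^{\ulu\ulv}_{\ulw}=Q^{\ulu}_{\ulv\ulw}$ directly from the definition and the associativity of $W_m^{\ast}$, while the triangular part is obtained as a particular case of the general block-matrix associativity of Lemma \ref{lemma-associativity-nabla-prod-matrix}. You instead factor everything through the compatibility identity $\widetilde{\mathbf{T}}_{\ulu}\bnabla \widetilde{\mathbf{T}}_{\ulv}=\widetilde{\mathbf{T}}_{\ulu\ulv}$ and then transport associativity from concatenation in the free monoid; for the triangular block you cite Lemma \ref{lemma-special-nabla-product-for-calA-uw} (strictly speaking, the matrix identity $\mathbf{T}_{\ulu\ulv}=\mathbf{T}_{\ulu}\nabla_{\bpartial_{\ulu}(Q_{\ulv})}\mathbf{T}_{\ulv}$ established in its proof rather than its statement, a minor citation quibble). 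Your entry-by-entry verification that $Q_{\ulu\ulv}$ splits as $Q_{\ulu}$ stacked over $\ulu\circ Q_{\ulv}$ is exactly right and uses equation \ref{eq-def-monoid-acting-on-roots-2} where it is needed. What your approach buys is conceptual clarity: it exhibits $\ulu\mapsto\widetilde{\mathbf{T}}_{\ulu}$ as carrying concatenation to $\bnabla$, which is the fact implicitly underlying the discussion after Definition \ref{def-extended-Tmatrix-associated-to-expression}, and it makes Lemma \ref{lemma-associativity-nabla-prod-matrix} unnecessary here. The paper's version, by contrast, leans on the already-proven abstract matrix lemma and so requires no new bookkeeping beyond the column vectors. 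Both arguments are complete and correct.
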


\begin{proof}
  We have to prove that
  \begin{equation*}
    Q^{\ulu\ulv}_{\ulw}=Q^{\ulu}_{\ulv\ulw}\quad\textrm{and}\quad
    (\mathbf{T}_{\ulu}\bnabla\mathbf{T}_{\ulv})\bnabla \mathbf{T}_{\ulw}=
    \mathbf{T}_{\ulu}\bnabla(\mathbf{T}_{\ulv}\bnabla \mathbf{T}_{\ulw}).
  \end{equation*}
  The equation $Q^{\ulu\ulv}_{\ulw}=Q^{\ulu}_{\ulv\ulw}$ follows from the definition of the column vector associated to an expression (see lemma \ref{lemma-special-nabla-product-for-calA-uw} and the associativity of the monoid $W^{\ast}_{m}.$

  The equation  $(\mathbf{T}_{\ulu}\bnabla\mathbf{T}_{\ulv})\bnabla \mathbf{T}_{\ulw}=
    \mathbf{T}_{\ulu}\bnabla(\mathbf{T}_{\ulv}\bnabla \mathbf{T}_{\ulw})$ is a particular case of lemma \ref{lemma-associativity-nabla-prod-matrix}.
\end{proof}

By lemmas \ref{lemma-special-nabla-product-for-calA-uw}, \ref{lemma-associativity-bnabla-product} and corollary \ref{coro-theo-Ju-isomorphic-to-ATu} we have that the algebra $\bcalJ(\ulu\ulw)$ is isomorphic to the nil graded algebra $\calA(\mathbf{T}_{\ulu}\bnabla \mathbf{T}_{\ulw}).$ More generally if we write the expression $\ulu$ as a product of intervals $\ulu=\ulv_1\cdots \ulv_k,$ we will show that the algebra $\bcalJ(\ulu)$ is isomorphic to the nil graded algebra $\calA(\mathbf{T}_{\ulv_1}\bnabla\cdots \bnabla \mathbf{T}_{\ulv_k}).$ Therefore to obtain a presentation for  $\bcalJ(\ulu),$ we only need to develop the $\bnabla-$product of matrices $\mathbf{T}_{\ulv_1}\bnabla\cdots \bnabla \mathbf{T}_{\ulv_k}.$ Note that in each step we need to calculate the corresponding column vectors $Q^{\ulv_j}_{\ulv_{j+1}}.$ Therefore the special $\bnabla-$product of extended matrices arises as a systematical way to manage all this information. Our idea is to work with decomposition of expressions as product of intervals, then it would be useful to have an explicit formula for $\widetilde{\mathbf{T}}_{\ulw}$ whenever $\ulw$ is an interval. The next lemma describe the different cases:


\begin{lemma}\label{lemma-matrix-T-for-interval-any-type}
Let $a,b\in I_m,$
\begin{enumerate}
  \item If $a=b$ then $\widetilde{\mathbf{T}}_{\lfloor a,b\rfloor}=\widetilde{\mathbf{T}}_{\lceil a,b \rceil}=[\alpha_a,0].$
  \item If $l^{\ast}(\lfloor a,b\rfloor)<m$ (resp. if $l^{\ast}(\lceil a,b \rceil)<m),$ then
  \begin{equation*}
  \widetilde{\mathbf{T}}_{\lfloor a,b\rfloor}=\left[\begin{array}{cccccc}
                             \alpha_{a}& 0 & 0 & 0 & 0  \\
                             \balpha(\lfloor a,a+1\rfloor)&-1 & 0 & 0 & \cdots & 0 \\
                             \balpha(\lfloor a,a+2\rfloor)&-1 & -1 & 0 & \cdots & 0\\
                             \vdots& \vdots & \vdots & \ddots & \ddots&\vdots \\
                             \balpha(\lfloor a,b\rfloor)&-1 & \cdots & -1& -1 & 0
                          \end{array}\right].
\end{equation*}
and respectively
\begin{equation*}
  \widetilde{\mathbf{T}}_{\lceil a,b\rceil}=\left[\begin{array}{cccccc}
                                  \alpha_{a}& 0 & 0 & 0 & 0 & 0\\
                                 \balpha(\lceil a,a-1\rceil)& -1 & 0 & 0 & \cdots & 0 \\
                                 \balpha(\lceil a,a-2\rceil)& -1 & -1 & 0 & \cdots & 0 \\
                                 \vdots& \vdots & \vdots & \ddots & \ddots&\vdots \\
                                  \balpha(\lceil a,b\rceil)&-1 & \cdots & -1 & -1 & 0 \\
                                 \end{array}\right].
\end{equation*}

  \item We also have
  \begin{equation*}
  \widetilde{\mathbf{T}}_{\lfloor a,a-1\rfloor}=\left[\begin{array}{cccccc}
                             \alpha_{a}& 0 & 0 &0& \cdots & 0  \\
                             \balpha(\lfloor a,a+1\rfloor)&-1 & 0& 0 &\cdots& 0 \\
                             \balpha(\lfloor a,a+2\rfloor)&-1 & -1 &0& \cdots& 0 \\
                             \vdots& \vdots & \vdots & \ddots & \ddots&\vdots \\
                             \balpha(\lfloor a,a-2\rfloor)&-1 & \cdots & -1  & 0& 0 \\
                             \alpha_{a}& -2 & -1 & \cdots-1 & -1&0
                          \end{array}\right]
\end{equation*}
and
\begin{equation*}
  \widetilde{\mathbf{T}}_{\lceil a,a+1\rceil}=\left[\begin{array}{cccccc}
                                  \alpha_{a}& 0 & 0 & 0 & \cdots & 0\\
                                 \balpha(\lceil a,a-1\rceil)& -1 & 0 & 0 & \cdots & 0 \\
                                 \balpha(\lceil a,a-2\rceil)& -1 & -1 & 0 & \cdots & 0 \\
                                 \vdots& \vdots & \vdots & \ddots & \ddots & \vdots \\
                                  \balpha(\lceil a,a+2\rceil)&-1 & \cdots & -1  & 0& 0 \\
                                  \alpha_{a}& -2 & -1 & \cdots-1 & -1&0
                                 \end{array}\right].
\end{equation*}
respectively.
\end{enumerate}
\end{lemma}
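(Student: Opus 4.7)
The plan is to verify the claim by direct computation from Definition \ref{def-extended-Tmatrix-associated-to-expression}, handling the three cases in turn and exploiting the explicit action formulas \ref{definition-dual-geometric-representation-TA} and \ref{Demazure-type-A-tilde-m}. I describe only $\lfloor a,b\rfloor$, since $\lceil a,b\rceil$ is treated by the symmetric argument obtained by swapping $a+1$ with $a-1$ everywhere. Case (1) is immediate: when $a = b$ the expression has length one, so $Q_{s_a} = [\alpha_a]$, $\mathbf{T}_{s_a}$ is the empty strictly lower triangular matrix $[0]$, and thus $\widetilde{\mathbf{T}}_{s_a} = [\alpha_a, 0]$.

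The heart of the proof is the following formula, which I would establish by induction on $k$: for any $1 \leq k \leq m-1$,
\[
s_a s_{a+1} \cdots s_{a+k-2} \circ \alpha_{a+k-1} \;=\; \alpha_a + \alpha_{a+1} + \cdots + \alpha_{a+k-1} \;=\; \balpha(\lfloor a, a+k-1\rfloor).
\]
The inductive step applies $s_a$ to $\balpha(\lfloor a+1, a+k-1\rfloor)$; only the summand $\alpha_{a+1}$ contributes a new term via $s_a \circ \alpha_{a+1} = \alpha_a + \alpha_{a+1}$, since the remaining indices $a+2, \ldots, a+k-1$ are not adjacent to $a$ in $I_m$ (here we use $k < m$, so no wrap-around occurs). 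This identity simultaneously delivers the first column $Q_{\lfloor a,b\rfloor}$ of the extended matrix in case (2), and, applied to the shorter expression $s_{a+j}\cdots s_{a+r-2} \circ \alpha_{a+r-1} = \balpha(\lfloor a+j, a+r-1\rfloor)$, it yields
\[
\partial_{a+j-1}\bigl(\balpha(\lfloor a+j, a+r-1\rfloor)\bigr) = -1
\]
for each sub-diagonal entry $(r,j)$ of $\mathbf{T}_{\lfloor a,b\rfloor}$, since $\alpha_{a+j}$ is the only summand adjacent to $a+j-1$ in $I_m$.

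Case (3) is the only delicate part, and is where I expect the main work to lie. The interval $\lfloor a,a-1\rfloor$ has length exactly $m$, so its last factor acts on a ``full cycle'' through $I_m$. The first $m-1$ rows of $\widetilde{\mathbf{T}}_{\lfloor a,a-1\rfloor}$ coincide verbatim with the formulas from case (2). For the last row one must instead compute $s_{a+j}\cdots s_{a-2} \circ \alpha_{a-1}$; when $j=1$ this produces $\alpha_{a+1} + \alpha_{a+2} + \cdots + \alpha_{a-1}$, which by equation \ref{eq-sum-of-roots-is-zero} equals $-\alpha_a$. Consequently the bottom entry of the column vector becomes $s_a \circ (-\alpha_a) = \alpha_a$, and the $(m,1)$ entry of $\mathbf{T}_{\lfloor a,a-1\rfloor}$ becomes $\partial_a(-\alpha_a) = -2$, while the entries $(m,j)$ for $2 \leq j \leq m-1$ are computed exactly as in case (2) and give $-1$. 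The main obstacle is simply to track the interaction between the cyclicity of $I_m$ and the affine relation \ref{eq-sum-of-roots-is-zero} carefully; no new ideas beyond the inductive formula above and this single identity are required.
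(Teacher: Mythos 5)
Your proposal is correct and follows essentially the same route as the paper: both reduce everything to the identity $s_a s_{a+1}\cdots s_{a+k-2}\circ\alpha_{a+k-1}=\balpha(\lfloor a,a+k-1\rfloor)$ (proved by induction using the explicit formulas \ref{definition-dual-geometric-representation-TA} and \ref{Demazure-type-A-tilde-m}), deduce the sub-diagonal entries from $\partial_c(\balpha(\lfloor c+1,b\rfloor))=-1$, and handle the length-$m$ case via the affine relation \ref{eq-sum-of-roots-is-zero}, which turns the wrap-around sum into $-\alpha_a$ and produces the $-2$ entry. The only cosmetic difference is that the paper organizes the computation as an induction on the length of the interval (peeling off the last letter and recomputing the final row right-to-left), whereas you package the same calculation as a single closed-form identity applied to all sub-intervals.
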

 \begin{proof}
 The case $m=2$ is trivial. Lets we assume that $m>2.$

  The first assertion is trivial.

  The other two assertions follows by induction on length $l$ of the corresponding interval. We will work with ascending intervals only, the other case is analogous.

  If $\ulw=\lfloor a, a+1 \rfloor=s_as_{a+1}$ then by definition of $\mathbf{T}_{\ulw}$ is given by $t_{21}=\partial_a(s_{a+1})=-1$ (the other entries are equal to $0$). The case 

  If $2<l=l^{\ast}(\ulw)<m$ say $\ulw=\lfloor a, b \rfloor=\lfloor a, b-1 \rfloor s_b$ .  By definition of $\mathbf{T}_{\ulw},$ the first  $l-1$ rows of $\mathbf{T}_{\ulw}$ are (basically) given by the matrix  $\mathbf{T}_{\ulv},$ where $\ulv=\lfloor a, b-1 \rfloor.$ By induction the entries $t_{kj}$ of $\mathbf{T}_{\ulv}$ are equal to $-1$ whenever $k>j$ (the other are $0$). The last row of $\mathbf{T}_{\ulw}$ is defined by
  $$[\partial_a(\lfloor a+1,b-1\rfloor\circ\alpha_b),\partial_{a+1}(\lfloor a+2,b-1\rfloor\circ\alpha_b),\dots,\partial_{b-2}(s_{b-1}\circ \alpha_b),\partial_{b-1}(\alpha_b),0].$$

  Using recursively equations \ref{definition-dual-geometric-representation-TA} and \ref{Demazure-type-A-tilde-m}, we can check that:

  \begin{equation*}
    \left\{\begin{matrix}
      \partial_{b-1}(\alpha_b)=-1; & s_{b-1}\circ \alpha_b=\balpha(\lfloor b-1,b\rfloor)\\
      \quad&\quad\\
       \partial_{b-2}(\balpha(\lfloor b-1,b\rfloor))=-1; & \lfloor b-2,b-1\rfloor\circ\alpha_b=\balpha(\lfloor b-2,b\rfloor)\\
       \quad&\quad\\
       \partial_{b-3}(\balpha(\lfloor b-2,b\rfloor))=-1; & \lfloor b-3,b-1\rfloor\circ\alpha_b=\balpha(\lfloor b-3,b\rfloor)\\
      \vdots & \vdots\\
    \end{matrix}\right.
  \end{equation*}
  In general
  $\partial_{c}(\balpha(\lfloor c+1,b\rfloor))=-1$ and $\lfloor c,b-1\rfloor\circ\alpha_b=\balpha(\lfloor c,b\rfloor)$ for any $c=a,\dots,b-2.$
  Therefore we obtain the desired conclusion.

  The case $l=m$ follows analogously. We only have to notice that in this case
  $t_{m1}=\partial_{a}(\balpha(\lfloor a+1,a-1\rfloor))=-2$ and $\lfloor a,a-2\rfloor\circ \alpha_{a-1}=\balpha(\lfloor a,a-1\rfloor)+\alpha_{a}=\alpha_{a}$
\end{proof}

\begin{example}\label{ex-extended-Tmatrix-associated-to-interval-type2}
If $m=5,$  $\ulu=\lfloor 0,3 \rfloor$ and $\ulw=\lceil 0,1\rceil,$ then we have:
\begin{equation*}
  \widetilde{\mathbf{T}}_{\ulu}=\left[\begin{array}{ccccc}
                             \alpha_{0}& 0 & 0 & 0 & 0  \\
                             \balpha(\lfloor0,1\rfloor)&-1 & 0 & 0 & 0  \\
                             \balpha(\lfloor0,2\rfloor)& -1 & -1 & 0 & 0 \\
                             \balpha(\lfloor0,3\rfloor)&-1 & -1 & -1 & 0
                          \end{array}\right],\quad
\widetilde{\mathbf{T}}_{\underline{w}}=\left[\begin{array}{cccccc}
                                  \alpha_{0}& 0 & 0 & 0 & 0 & 0\\
                                 \balpha(\lceil0,4\rceil)& -1 & 0 & 0 & 0 & 0 \\
                                 \balpha(\lceil0,3\rceil)& -1 & -1 & 0 & 0 & 0\\
                                  \balpha(\lceil0,2\rceil)&-1 & -1 & -1 & 0 & 0 \\
                                  \alpha_{0}& -2 & -1 & -1 &-1 & 0
                                 \end{array}\right].
\end{equation*}
\end{example}

As we can see in the proof of lemma \ref{lemma-matrix-T-for-interval-any-type}, most of the time we will need to calculate the action of $s_c$ and the Demazure operator $\partial_c$ over elements of the forms $\balpha(\lfloor a,b\rfloor)$ and $\balpha(\lceil a,b \rceil).$ In order to obtain another level of optimization in the process of calculating the matrix $\mathbf{T}_{\ulu},$ we add the following lemma

\begin{lemma}\label{lemma-tables-action-vs-Demazure}
  Let $a,b,c\in I_m.$
  \begin{enumerate}
  \item If $l^{\ast}(\lfloor a,b\rfloor)<m-1,$ then we have the following table:
  \begin{equation*}
    \begin{array}{ccc}
    s_c\circ \balpha(\lfloor a,b\rfloor), & \partial_c( \balpha(\lfloor a,b\rfloor)),&\textrm{Condition} \\
    \quad & \quad \\
    \balpha(\lfloor a-1,b\rfloor), & -1,& \textrm{if}\quad c=a-1 \\
    \quad & \quad \\
    \balpha(\lfloor a,b+1\rfloor), & -1,&\textrm{if}\quad c=b+1 \\
    \quad & \quad \\
    \balpha(\lfloor a+1,b\rfloor), &1,& \textrm{if}\quad c=a \\
    \quad & \quad \\
    \balpha(\lfloor a,b-1\rfloor), &1,& \textrm{if}\quad c=b \\
    \quad & \quad \\
    \balpha(\lfloor a,b\rfloor), &0,& \textrm{otherwise} \\
    \quad & \quad
    \end{array}
  \end{equation*}
  \item If $l^{\ast}(\lceil a,b\rceil)<m-1,$ then we have the following table:
  \begin{equation*}
    \begin{array}{ccc}
    s_c\circ \balpha(\lceil a,b\rceil), & \partial_c( \balpha(\lceil a,b\rceil)),&\textrm{Condition} \\
    \quad & \quad \\
    \balpha(\lceil a+1,b\rceil), & -1,& \textrm{if}\quad c=a+1 \\
    \quad & \quad \\
    \balpha(\lceil a,b-1\rceil), & -1,&\textrm{if}\quad c=b-1 \\
    \quad & \quad \\
    \balpha(\lceil a-1,b\rceil), &1,& \textrm{if}\quad c=a \\
    \quad & \quad \\
    \balpha(\lceil a,b+1\rceil), &1,& \textrm{if}\quad c=b \\
    \quad & \quad \\
    \balpha(\lceil a,b\rceil), &0,& \textrm{otherwise} \\
    \quad & \quad
    \end{array}
  \end{equation*}
  \item For $\lfloor a,a-2\rfloor,$ we have the following table:
  \begin{equation*}
    \begin{array}{ccc}
    s_c\circ \balpha(\lfloor a,a-2\rfloor), & \partial_c( \balpha(\lfloor a,a-2\rfloor)),&\textrm{Condition} \\
    \quad & \quad \\
    \alpha_{a-1}, & -2,& \textrm{if}\quad c=a-1 \\
    \quad & \quad \\
    \balpha(\lfloor a+1,b\rfloor), &1,& \textrm{if}\quad c=a \\
    \quad & \quad \\
    \balpha(\lfloor a,a-3\rfloor), &1,& \textrm{if}\quad c=a-2 \\
    \quad & \quad \\
    \balpha(\lfloor a,a-2\rfloor), &0,& \textrm{otherwise} \\
    \quad & \quad
    \end{array}
  \end{equation*}
  \item For $\lceil a,a+2\rceil,$ we have the following table:
  \begin{equation*}
    \begin{array}{ccc}
    s_c\circ \balpha(\lceil a,a+2\rceil), & \partial_c( \balpha(\lceil a,a+2\rceil)),&\textrm{Condition} \\
    \quad & \quad \\
    \alpha_{a+1}, & -2,& \textrm{if}\quad c=a+1 \\
    \quad & \quad \\
    \balpha(\lceil a-1,b\rceil), &1,& \textrm{if}\quad c=a \\
    \quad & \quad \\
    \balpha(\lceil a,a+3\rceil), &1,& \textrm{if}\quad c=a+2 \\
    \quad & \quad \\
    \balpha(\lceil a,a+2\rceil), &0,& \textrm{otherwise} \\
    \quad & \quad
    \end{array}
  \end{equation*}
  \end{enumerate}
\end{lemma}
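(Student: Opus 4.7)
The proof is by direct computation, using linearity of $s_c$ and $\partial_c$ together with the basic formulas \eqref{definition-dual-geometric-representation-TA} and \eqref{Demazure-type-A-tilde-m}. The key observation is that $\balpha(\lfloor a,b \rfloor) = \alpha_a + \alpha_{a+1} + \cdots + \alpha_b$ is a sum of simple roots whose indices are consecutive modulo $m$, and that $s_c, \partial_c$ act nontrivially on $\alpha_d$ only when $d \in \{c-1, c, c+1\}$ (mod $m$).

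For parts (1) and (2), the hypothesis $l^{\ast}(\lfloor a,b\rfloor) < m-1$ (respectively for $\lceil a,b \rceil$) ensures that the indices $a-1, a, a+1, \ldots, b-1, b, b+1$ are pairwise distinct modulo $m$. Therefore, for $c \in \{a-1, a, b, b+1\}$ only one or two summands of $\balpha(\lfloor a,b \rfloor)$ are affected, and the contributions on the remaining summands vanish. I would then handle each row of the table by a one-line computation; for example, for $c = a-1$ in part (1), only $\alpha_a$ interacts with $s_{a-1}$ and $\partial_{a-1}$, giving $s_{a-1}\circ \alpha_a = \alpha_{a-1}+\alpha_a$ (so $s_{a-1}$ extends the sum by prepending $\alpha_{a-1}$) and $\partial_{a-1}(\alpha_a) = -1$. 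The cases $c = b+1$, $c = a$, $c = b$ are analogous (the first two removing a boundary term, the last two flipping a sign), and every $c$ outside $\{a-1, a, a+1, \ldots, b, b+1\}$ leaves all summands untouched, giving the ``otherwise'' row. Part (2) follows by the exact same argument with the direction of the interval reversed.

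For parts (3) and (4), the interval $\lfloor a, a-2 \rfloor$ has length $m-1$, so its associated root sum ranges over all residues except $a-1$. By equation \eqref{eq-sum-of-roots-is-zero} we get the identity
\begin{equation*}
\balpha(\lfloor a, a-2 \rfloor) = -\alpha_{a-1},
\end{equation*}
which immediately yields the row $c = a-1$: $s_{a-1}\circ(-\alpha_{a-1}) = \alpha_{a-1}$ and $\partial_{a-1}(-\alpha_{a-1}) = -2$. The remaining rows ($c = a$, $c = a-2$, and the ``otherwise'' case) are handled exactly as in part (1), with the simplification that the summand $\alpha_{a-1}$ is absent from $\balpha(\lfloor a, a-2 \rfloor)$, so $s_c$ and $\partial_c$ only interact with the genuine endpoints. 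Part (4) is the mirror image of part (3).

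The only real obstacle is bookkeeping: one must confirm, using the length hypothesis, that none of the ``boundary'' cases $c \in \{a-1, a, b, b+1\}$ collapse together (e.g.\ that $b+1 \neq a-1$ modulo $m$), since otherwise the contributions would combine nontrivially and alter the table. Once this is verified the lemma follows from at most two applications of the basic formulas per entry.
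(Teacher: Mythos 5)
Your proposal is correct and follows essentially the same route as the paper, whose proof is simply the remark that everything follows by repeated use of equations \ref{definition-dual-geometric-representation-TA}, \ref{Demazure-type-A-tilde-m} and the vanishing of the full root sum; your reduction of parts (3) and (4) to $\balpha(\lfloor a,a-2\rfloor)=-\alpha_{a-1}$ is exactly the intended use of that identity and handles all values of $c$ at once. The one subcase your case analysis does not cover is the ``otherwise'' row of parts (1) and (2) when $c$ lies strictly inside the interval, i.e.\ $c\in\{a+1,\dots,b-1\}$: there three summands are moved rather than none, but $s_c$ fixes $\alpha_{c-1}+\alpha_c+\alpha_{c+1}$ and $\partial_c(\alpha_{c-1}+\alpha_c+\alpha_{c+1})=-1+2-1=0$, so the row still holds; you should add this line (and, while you are at it, note that $c=a-1,b+1$ lengthen the interval while $c=a,b$ shorten it, the reverse of what your parenthetical says).
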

\begin{proof}
  It follows directly by using repeatedly equations \ref{definition-dual-geometric-representation-TA}, \ref{Demazure-type-A-tilde-m} and \ref{eq-sum-of-roots-is-zero2}.
\end{proof}

In the following example we show how we can reduce the calculus of the matrix $\mathbf{T}_{\ulu},$ for a given expression, by using an decomposition in product of intervals and  the $\bnabla-$product of the corresponding extended matrices:
\begin{example}
  Let $m=5$ and $\ulu= s_0s_1s_2s_3s_2s_1=\lfloor 0,3\rfloor\lceil2,1\rceil.$ Thanks to lemma \ref{lemma-matrix-T-for-interval-any-type}, we already know that

  \begin{equation*}
    \widetilde{\mathbf{T}}_{\lfloor 0,3\rfloor}=\left[\begin{matrix}
                                                        \alpha_0 & 0 & 0 & 0 & 0 \\
                                                        \balpha(\lfloor 0,1\rfloor) & -1 & 0 & 0 & 0 \\
                                                        \balpha(\lfloor 0,2\rfloor) & -1 & -1 & 0 & 0 \\
                                                        \balpha(\lfloor 0,3\rfloor) & -1 & -1 & -1 & 0
                                                      \end{matrix}\right],\quad \widetilde{\mathbf{T}}_{\lceil 2,1\rceil}=\left[\begin{matrix}
                                                      \alpha_2 & 0 & 0 \\
                                                      \balpha(\lceil2,1\rceil) & -1 & 0
                                                    \end{matrix}\right].
  \end{equation*}
  therefore we only have to calculate the column vector $Q^{\lfloor 0,3\rfloor}_{\lceil2,1\rceil}$ and the matrix $\bpartial_{\lfloor0,3\rfloor}(Q_{\lceil2,1\rceil}).$

Using lemma \ref{lemma-tables-action-vs-Demazure}, we can check easily that
  \begin{equation*}
    {\lfloor 0,3\rfloor}\circ Q_{\lceil2,1\rceil}={\left[\begin{matrix}
                                          \alpha_3 \\
                                          \balpha(\lceil 3,2\rceil)
                                        \end{matrix}\right]},\quad
    \bpartial_{\lfloor0,3\rfloor}\left(Q_{\lceil2,1\rceil}\right)
                                        ={\left[\begin{matrix}
                                                      0 & 0 & +1 & -1 \\
                                                   0 & +1 & 0 & -1
                                                \end{matrix}\right]}
  \end{equation*}
   therefore

  \begin{equation*}
    \widetilde{\mathbf{T}}_{\ulu}=\widetilde{\mathbf{T}}_{\lfloor0,3\rfloor}\bnabla \widetilde{\mathbf{T}}_{\lceil2,1\rceil}
    =\left[\begin{matrix}
             \alpha_0                   & 0 & 0 & 0 & 0 & 0 & 0 \\
             \balpha(\lfloor0,1\rfloor) & -1 & 0 & 0 & 0 & 0 & 0 \\
             \balpha(\lfloor0,2\rfloor) & -1 & -1 & 0 & 0 & 0 & 0 \\
             \balpha(\lfloor0,3\rfloor) & -1 & -1 & -1 & 0 & 0 & 0 \\
             {\alpha_3}                   &{ 0} & {0} & {+1} & {-1} & 0 & 0 \\
             {\balpha(\lceil 3,2\rceil)} & {0} & {+1} & {0} & {-1} & -1 & 0
           \end{matrix}\right]
  \end{equation*}
  In particular, the presentation of the algebra $\bcalJ(\ulu)$ is given by generators $J_1,J_2,J_3,J_4,J_5,J_6$ and relations
  \begin{equation*}
    \left\{\begin{matrix}
      J_1^2= & 0, &J_2^2= & -J_1J_2\\
      \quad&\quad&\quad&\quad \\
      J_3^2= & -J_1J_3-J_2J_3,&J_4^2= & -J_1J_4-J_2J_4-J_3J_4 \\
       \quad&\quad&\quad&\quad \\
      J_5^2= & J_3J_5-J_4J_5,& J_6^2= & J_2J_6-J_4J_6-J_5J_6.
    \end{matrix} \right.
  \end{equation*}
\end{example}


The next theorem is a very important fact that will be useful to develop our algorithm:

\begin{theorem}\label{lemma-commuting-move-for-type-Atilde}
  In type $\widetilde{A},$ the Gelfand-Tsetlin algebra $\bcalJ(\ulu)$ is an invariant under commuting moves in $W_m^{\ast}.$ That is, if $b,c\in I_m$ are such that $c\neq b,b\pm 1,$ $\ulu=s_{a_1}\cdots s_{a_{p-1}}s_bs_cs_{a_{p+2}}\cdots s_{a_n}$ and $\ulv=s_{a_1}\cdots s_{a_{p-1}}s_cs_bs_{a_{p+2}}\cdots s_{a_n}$ then the algebras $\bcalJ(\ulu)$ and $\bcalJ(\ulv)$ are isomorphic.
\end{theorem}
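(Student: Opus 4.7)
My plan is to invoke Theorem \ref{theo-Ju-isomorphic-to-ATu} to reduce the problem to constructing an isomorphism $\bgamma : \calA_{\ulu} \to \calA_{\ulv}$ in $\bcalNT$. Writing $X_1,\dots,X_n$ and $Y_1,\dots,Y_n$ for the standard generators of $\calA_{\ulu}$ and $\calA_{\ulv}$, I would try the natural \emph{swap map} given by $\bgamma(X_k)=Y_k$ for $k\notin\{p,p+1\}$, $\bgamma(X_p)=Y_{p+1}$, and $\bgamma(X_{p+1})=Y_p$. Its associated matrix is the $n\times n$ permutation matrix of the transposition $(p,p+1)$, which is invertible, so by Corollary \ref{coro-conditions3-for-isomorphisms} the entire proof reduces to checking that $\bgamma$ is a well-defined morphism in $\bcalNT$, i.e., that it respects the defining relations \ref{eq-theo-Ju-isomorphic-to-ATu} of $\calA_{\ulu}$ row by row.

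The first step would be to dispose of the rows $k<p$, whose defining data involves only the unchanged letters $a_1,\dots,a_k$ and so matches verbatim. For $k\in\{p,p+1\}$ I would use the three basic identities that follow from the hypothesis $c\neq b,b\pm 1$, namely $s_b\circ\alpha_c=\alpha_c$, $s_c\circ\alpha_b=\alpha_b$, and $\partial_b(\alpha_c)=\partial_c(\alpha_b)=0$. The vanishing of $\partial_b(\alpha_c)$ forces the subdiagonal entry $t_{p+1,p}$ of both $\mathbf{T}_{\ulu}$ and $\mathbf{T}_{\ulv}$ to be zero, while the two $W$-invariance identities identify the remaining entries of rows $p$ and $p+1$ of $\mathbf{T}_{\ulu}$ with those of rows $p+1$ and $p$ of $\mathbf{T}_{\ulv}$ respectively, so the relations for $X_p$ and $X_{p+1}$ transform correctly under $\bgamma$.

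The main obstacle is the comparison of the rows $k>p+1$. Setting $\beta=s_{a_{p+2}}\cdots s_{a_{k-1}}\circ\alpha_{a_k}\in\mathfrak{h}^{\ast}$, the coefficients at columns $j\notin\{p,p+1\}$ agree automatically because $s_bs_c=s_cs_b$ in $W$ and the action on $\bcalR$ factors through $W$. The two middle columns force the equalities
\[ \partial_b(s_c\circ\beta)=\partial_b(\beta),\qquad \partial_c(s_b\circ\beta)=\partial_c(\beta). \]
These rest on the identity $\partial_b(s_c\circ f)=s_c\circ\partial_b(f)$, valid for every $f\in\bcalR$, which I would derive straight from definition \ref{def-demazure-operator} by using $s_bs_c=s_cs_b$ to rewrite $s_b\circ(s_c\circ f)=s_c\circ(s_b\circ f)$ and $s_c\circ\alpha_b=\alpha_b$ to pull $s_c$ past the denominator. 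Specializing to $f\in\mathfrak{h}^{\ast}$ makes $\partial_b(f)$ a scalar in $\F$, automatically fixed by $s_c$, yielding the desired equalities; the $\partial_c$ case is symmetric. With these identities in hand, $\bgamma$ becomes a well-defined degree-preserving homomorphism, and Corollary \ref{coro-conditions3-for-isomorphisms} delivers the isomorphism.
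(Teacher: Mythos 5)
Your proposal is correct and follows essentially the same route as the paper: reduce via Theorem \ref{theo-Ju-isomorphic-to-ATu} to comparing the matrices $\mathbf{T}_{\ulu}$ and $\mathbf{T}_{\ulv}$, verify that the transposition assignment $X_p\mapsto Y_{p+1}$, $X_{p+1}\mapsto Y_p$ respects the defining relations block by block (rows $k<p$, the swapped pair, rows $k>p+1$), and conclude by invertibility of the associated permutation matrix. The one place where you genuinely diverge is the key identity $\partial_b(s_c\circ\beta)=\partial_b(\beta)$ for the columns $p,p+1$ in rows $k>p+1$: the paper expands $\beta=\sum_a\lambda_a\alpha_a$ and computes term by term with the explicit type-$\widetilde{A}$ formulas \ref{definition-dual-geometric-representation-TA} and \ref{Demazure-type-A-tilde-m}, whereas you derive the general commutation $\partial_b(s_c\circ f)=s_c\circ\partial_b(f)$ from definition \ref{def-demazure-operator} using $s_bs_c=s_cs_b$ and $s_c\circ\alpha_b=\alpha_b$, then observe that $\partial_b(\beta)$ is a scalar fixed by $s_c$. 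Your version is cleaner and, unlike the paper's computation, is not tied to the specific geometric realization of type $\widetilde{A}$ --- it works for any realization in which commuting simple reflections fix each other's roots --- but the overall architecture of the two proofs is the same.
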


\begin{proof}
  Let $T=\mathbf{T}_{\ulu}=[t_{kj}]$ and $R=\mathbf{T}_{\ulv}=[r_{kj}].$ We denote by $\overrightarrow{T}_k$ and $\overrightarrow{R}_k$ the $k-$th row of the matrices $T$ and $R$ respectively and $\widehat{T}_j$ and $\widehat{R}_j$ the corresponding $j-$th column of the matrices $T$ and $R$ respectively.

  Note that by definition of the matrices $T=\mathbf{T}_{\ulu},R=\mathbf{T}_{\ulv}$ it is clear that $\overrightarrow{T}_k=\overrightarrow{R}_k$ for $k=1,\dots,p-1.$ In fact if $k=1,\dots,p-1,$ we have:
  \begin{equation*}
    t_{kj}=r_{kj}=\left\{\begin{array}{cc}
                           \partial_{a_j}(s_{a_{j+1}}\cdots s_{a_{k-1}}\circ \alpha_{a_k}) &\quad\textrm{if}\quad 1\leq j <k\leq p-1 \\
                           \quad & \quad \\
                           0 & \quad\textrm{if}\quad k\leq j
                         \end{array}\right.
  \end{equation*}
   Again by definition of $T=\mathbf{T}_{\ulu}$ and $R=\mathbf{T}_{\ulv}$ we have that $\widehat{T}_{j}=\widehat{R}_j$ for any $j\geq p+2.$ In fact if  $j\geq p+2$ we have:

  \begin{equation*}
    t_{kj}=r_{kj}=\left\{\begin{array}{cc}
                           \partial_{a_j}(s_{a_{j+1}}\cdots s_{a_{k-1}}\circ \alpha_{a_k}) &\quad\textrm{if}\quad p+2\leq j <k \\
                           \quad & \quad \\
                           0 & \quad\textrm{if}\quad k\leq j
                         \end{array}\right.
  \end{equation*}

  Since $s_b,s_c$ commutes as elements of the group $W_m,$ we also have that $t_{kj}=r_{kj}$ for any $k\geq p+2$ and $j=1,\dots,p-1.$  In fact if $k\geq p+2$ and $1\leq j\leq p-1$ we have:
  \begin{equation*}
    t_{kj}=\partial_{a_j}(s_{a_{j+1}}\cdots s_bs_c\cdots s_{a_{k-1}}\circ \alpha_{a_k})
  =\partial_{a_j}(s_{a_{j+1}}\cdots s_cs_b\cdots s_{a_{k-1}}\circ \alpha_{a_k})=r_{kj}.
  \end{equation*}

 Now note that $\overrightarrow{T}_{p}=\overrightarrow{R}_{p+1}$ and $\overrightarrow{T}_{p+1}=\overrightarrow{R}_{p}.$ In fact since $c\neq b, b\pm1$ we have $\partial_c(\alpha_b)=\partial_b(\alpha_c)=0.$ Therefore we can check that

 \begin{equation*}
  t_{pj}=r_{{p+1},j}=\left\{\begin{array}{cc}
                   \partial_{a_j}(s_{a_{j+1}}\cdots s_{a_{p-1}}\circ \alpha_{b}) & \quad \textrm{if}\quad 1\leq j\leq p-1 \\
                   \quad & \quad \\
                   0 & \textrm{otherwise}
                 \end{array} \right.
 \end{equation*}
 and
  \begin{equation*}
  r_{pj}=t_{{p+1},j}=\left\{\begin{array}{cc}
                   \partial_{a_j}(s_{a_{j+1}}\cdots s_{a_{p-1}}\circ \alpha_{c}) & \quad \textrm{if}\quad 1\leq j\leq p-1 \\
                   \quad & \quad \\
                   0 & \textrm{otherwise}
                 \end{array} \right.
 \end{equation*}
  Finally note that $\widehat{T}_p=\widehat{R}_{p+1}$ and $\widehat{R}_p=\widehat{T}_{p+1}.$ In fact

   \begin{equation*}
  t_{kp}=\left\{\begin{array}{cc}
                   \partial_{b}(s_{c}\circ h) & \quad \textrm{if}\quad p+2\leq k \\
                   \quad & \quad \\
                   0 & \textrm{otherwise}
                 \end{array} \right.\quad\textrm{and}\quad
  t_{k,{p+1}}=\left\{\begin{array}{cc}
                   \partial_{c}(h) & \quad \textrm{if}\quad p+2\leq k \\
                   \quad & \quad \\
                   0 & \textrm{otherwise}
                 \end{array} \right.
 \end{equation*}
 and
   \begin{equation*}
  r_{k,{p+1}}=\left\{\begin{array}{cc}
                   \partial_{b}(h) & \quad \textrm{if}\quad p+2\leq k \\
                   \quad & \quad \\
                   0 & \textrm{otherwise}
                 \end{array} \right.\quad\textrm{and}\quad
   r_{kp}=\left\{\begin{array}{cc}
                   \partial_{c}(s_{b}\circ h) & \quad \textrm{if}\quad p+2\leq k \\
                   \quad & \quad \\
                   0 & \textrm{otherwise}
                 \end{array} \right.
 \end{equation*}
 Where $h=s_{a_{p+2}}\cdots s_{a_{k-1}}\circ \alpha_{a_k}$ in both cases. By equation \ref{definition-dual-geometric-representation-TA}, we can write $h=\sum_{a\in I_m}{\lambda_a\alpha_a},$ for some $\lambda_a\in \F.$ Then by linearity and equations \ref{definition-dual-geometric-representation-TA} and \ref{Demazure-type-A-tilde-m} we have:
 \begin{equation*}
 \begin{array}{c}
    s_c\circ h=\sum_{a\in I_m}{\lambda_a(s_c\circ\alpha_a)}
   =\sum_{a\neq c}{\lambda_a\alpha_a}+(\lambda_{c+1}+\lambda_{c-1}-\lambda_c)\alpha_c. \\
   \quad \\
    s_b\circ h=\sum_{a\in I_m}{\lambda_a(s_b\circ\alpha_a)}
   =\sum_{a\neq b}{\lambda_a\alpha_a}+(\lambda_{b+1}+\lambda_{b-1}-\lambda_b)\alpha_c.
 \end{array}
 \end{equation*}
 and then

 \begin{equation*}
 \begin{array}{c}
    \partial_b(s_c\circ h)=\sum_{a\neq c}{\lambda_a\partial_b(\alpha_a)}
    +(\lambda_{c+1}+\lambda_{c-1}-\lambda_c)\partial_b(\alpha_c)=\sum_{a\neq c}{\lambda_a\partial_b(\alpha_a)}=\partial_b(h). \\
   \quad \\
    \partial_c(s_b\circ h)=\sum_{a\neq b}{\lambda_a\partial_c(\alpha_a)}
    +(\lambda_{b+1}+\lambda_{b-1}-\lambda_b)\partial_c(\alpha_b)=\sum_{a\neq b}{\lambda_a\partial_c(\alpha_a)}=\partial_c(h).
 \end{array}
 \end{equation*}

With all of this relations between the entries of the matrices $T$ and $R$ we have that the algebras $\calA(T)$ and $\calA(R)$ are defined as commutative algebras by the following presentations:

\begin{equation*}
  \calA(T):\left\{\begin{matrix}
                    X_1^2=0\quad \\
                    \quad \\
                    X_k^2=\sum_{j<k}t_{kj}X_jX_k,\quad(1\leq k<p)\\
                    \quad\\
                    X_p^2=\sum_{j<p}t_{pj}X_jX_p\\
                    \quad \\
                    X_{p+1}^2=\sum_{j<p}t_{{p+1}j}X_jX_{p+1}\\
                    \quad\\
                    X_k^2=\sum_{j\neq p,p+1}t_{kj}X_jX_k
                    +t_{kp}X_pX_k+t_{k,{p+1}}X_{p+1}X_k, \quad (p+1<k<n)\\
                  \end{matrix}\right.
\end{equation*}
and

\begin{equation*}
  \calA(R):\left\{\begin{matrix}
                    Y_1^2=0 \\
                    \quad \\
                    Y_k^2=\sum_{j<k}t_{kj}Y_jY_k,\quad(1\leq k<p)\\
                    \quad\\
                    Y_p^2=\sum_{j<p}t_{{p+1},j}Y_jY_p\\
                    \quad \\
                    Y_{p+1}^2\sum_{j<p}t_{{p}j}Y_jY_{p+1}\\
                    \quad \\
                    Y_k^2=\sum_{j\neq p,p+1}t_{kj}Y_jY_k
                    +t_{k,{p+1}}Y_pY_k+t_{k,{p}}Y_{p+1}Y_k, \quad (p+1<k<n)\\
                  \end{matrix}\right.
\end{equation*}

and therefore it is clear that the assignment:
\begin{equation*}
  \bgamma:\left\{\begin{array}{cc}
                  X_k\mapsto Y_k,& (k\neq p,p+1)  \\
                  X_p\mapsto Y_{p+1} & \quad\\
                  X_{p+1}\mapsto Y_p & \quad
                \end{array}\right.
\end{equation*}
defines an isomorphism  $\calA(T)\rightarrow \calA(R).$ Note that the matrix associated $\Gamma$ is invertible, since $\det(\Gamma)=-1\neq 0.$
The conclusion follows from theorem \ref{theo-Ju-isomorphic-to-ATu} and definition \ref{def-nilalg-associted-to-expression}.

\end{proof}

We now define the following algorithm to choose a representative expression $\ulu'$ for $\ulu,$ equivalent by commuting moves:

\begin{definition}\label{def-abacus}
  Let $\ulu=s_{a_1}s_{a_2}\cdots s_{a_n}\in W_{m}^{\ast}.$ If $a_j=a_i,a_i\pm 1,$ we say that the symbols ${a_j}$ and $a_{i}$ are neighbors. We define the \emph{abacus of letters} of $\ulu$ as the array $\Xi=\Xi(\ulu)$ described by the following construction:
  \begin{enumerate}
    \item In the first line of $\Xi$ we put the symbol ${a_1}.$ We say at this point that the first line of $\Xi$ have both an \emph{ascending pattern} and a \emph{descending pattern}. In the following steps we say that a line composed by just one symbol have both an \emph{ascending pattern} and a \emph{descending pattern}.
    \item
    \begin{itemize}
      \item If $a_2=a_{1}+1$ we say that the symbol $a_2$ respect the \emph{ascending pattern} of line 1 in $\Xi$ and we put the symbol ${a_2}$ at the right of ${a_1}$ (same line). In this case we say that the (new) line 1 of $\Xi$ take the \emph{ascending pattern}
      \item If $a_2=a_{1}-1$ we say that the symbol $a_2$ respect a \emph{descending pattern} of line 1 of $\Xi$ and we put the symbol ${a_2}$ at the right of ${a_1}$ (same line). In this case we say that the (new) line 1 of $\Xi$ take the \emph{descending pattern}.
      \item Otherwise we put the symbol $a_2$ in the first position of the second line of $\Xi.$
    \end{itemize}
    \item Assuming that we already have located the symbols ${a_1},\cdots,{a_p}$ in $\Xi$ using the first $q$ lines of it. We also assume that any of those lines have a defined pattern: ascending or descending (or both in case that there is only one symbol in that line).  Let ${b_1},\cdots,{b_q}$ the corresponding rightmost symbol located in lines $1,\dots,q$ in $\Xi.$ For the symbol ${a_{p+1}}$ we have:
    \begin{itemize}
      \item If ${a_{p+1}}=b_q\pm 1,$ and the symbol respect the pattern of line $q,$ we put the symbol $a_{p+1}$ in the same line just at the right of the symbol $b_q.$ The (new) line keep (take) its pattern.
      \item If ${a_{p+1}}\neq b_q\pm 1,$ but there is a neighbor symbol of ${a_{p+1}}$ in line $q.$ Then we put the symbol $a_{p+1}$ at the first position of line $q+1.$
      \item If there is no neighbor symbols of ${a_{p+1}}$ in lines $j+1,\dots,q$ but there is a neighbor symbols of ${a_{p+1}}$ in line $j$ then we apply the same analysis:
          \begin{itemize}
            \item If ${a_{p+1}}=b_j\pm 1.$ In this case, if also the symbol respect the pattern of line $q,$ we put the symbol $a_{p+1}$ in the same line just at the right of the symbol $b_j.$ The new line keeps its pattern.
            \item If ${a_{p+1}}\neq b_j\pm 1,$ but there is a neighbor symbol of ${a_{p+1}}$ in line $j.$ Then we put the symbol $a_{p+1}$ at the first position of line $q+1.$
          \end{itemize}
      \item Otherwise we put the symbol in the first (leftmost) position of line $q+1.$
    \end{itemize}
  \end{enumerate}
  For each $j=1,\dots f$ if the $j-$th line of the abacus $\Xi,$ is given by the symbols $c_{j1}\cdots c_{jg},$ we define the expression  $L_j=s_{c_{j1}}\cdots s_{c_{jg}}\in W_m^{\ast}.$ Finally we define the expression $\ulu'=L_1\cdots L_f\in W_m^{\ast}.$
\end{definition}

\begin{example}
  Let $m=10$ and $\ulu=s_3s_1s_0s_4s_2s_8s_7s_5s_6s_2s_5s_1s_0.$ Following our algorithm we obtain at the end:
  \begin{equation*}
    \Xi=\left\{\begin{matrix}
                 3 & 4& 5& \quad \\
                 1 & 0& \quad& \quad \\
                 2 & \quad& \quad& \quad \\
                 8 & 7& 6& 5 \\
                 2&1 & 0& \quad&
               \end{matrix}\right.
  \end{equation*}
  Then we obtain the expression $\ulu'=\lfloor3,5\rfloor\lceil1,0\rceil s_2 \lceil8,5\rceil \lceil2,0\rceil.$
\end{example}



\begin{theorem}\label{theo-summarizing}
  Given an expression $\ulu\in W_{m}^{\ast},$ and let $\ulu'=L_1\cdots L_f,$ as in definition \ref{def-abacus}. Then we have
  \begin{enumerate}
    \item The algebras $\bcalJ(\ulu)$ and $\bcalJ(\ulu')$ are isomorphic.
    \item The factors $L_1,\dots,L_f$ are intervals (in the sense of equation \ref{eq-up-interval-coxeter}).
    \item The algebra $\bcalJ(\ulu)$ is isomorphic to the nil graded algebra $\calA(T_{L_1}\bnabla\cdots \bnabla T_{L_{f}}).$
  \end{enumerate}
\end{theorem}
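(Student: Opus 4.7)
The plan is to dispatch the three assertions in order, leveraging the technology built up in the preceding subsections, and then to close with the main technical point.

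For part (2), I would read off the conclusion directly from the construction in Definition \ref{def-abacus}. Each line of the abacus is initialized with a single symbol and then extended only by symbols that either continue an ascending pattern (add $1$) or a descending pattern (subtract $1$). Once a line contains at least two entries, its pattern is fixed, so its symbols form a consecutive run $a, a \pm 1, a \pm 2, \ldots, a \pm (g-1)$ in $I_m$. This is precisely the content of an interval $\lfloor a,b\rfloor$ or $\lceil a,b\rceil$ in the sense of equation \eqref{eq-up-interval-coxeter}, so each $L_j$ is an interval.

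For part (1), the key tool is Theorem \ref{lemma-commuting-move-for-type-Atilde}: when $b,c \in I_m$ satisfy $c \neq b, b \pm 1$, swapping $s_b$ and $s_c$ at adjacent positions of an expression preserves the Gelfand-Tsetlin algebra up to isomorphism. I would then argue that the abacus algorithm effects precisely such a sequence of commuting swaps in transforming $\ulu$ into $\ulu'$. Process the letters of $\ulu$ one at a time; the induction invariant is that after $p$ steps the current rearranged word equals the concatenation $L_1^{(p)} \cdots L_q^{(p)}$ of the current lines of the abacus, and is obtained from $s_{a_1}\cdots s_{a_p}$ by commuting moves only. When the algorithm places $a_{p+1}$ on an earlier line $j < q$, it does so only because $a_{p+1}$ has no neighbor (in the sense of Definition \ref{def-abacus}) among the symbols of $L_{j+1}^{(p)}, \ldots, L_q^{(p)}$; hence every elementary swap required to transport $a_{p+1}$ leftward to the end of line $j$ involves two letters that differ by more than one in $I_m$. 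Iterating Theorem \ref{lemma-commuting-move-for-type-Atilde} then gives $\bcalJ(\ulu) \cong \bcalJ(\ulu')$.

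For part (3), I would invoke Corollary \ref{coro-theo-Ju-isomorphic-to-ATu} inductively on the decomposition $\ulu' = L_1 \cdots L_f$. At step $k$ it yields
\begin{equation*}
\bcalJ(L_1 \cdots L_{k+1}) \;\cong\; \bcalJ(L_1 \cdots L_k) \,\nabla_{C_k}\, \bcalJ(L_{k+1})
\end{equation*}
for the connecting matrix $C_k = \bpartial_{L_1 \cdots L_k}(Q_{L_{k+1}})$, and by Theorem \ref{theo-Ju-isomorphic-to-ATu} each factor $\bcalJ(L_j)$ is isomorphic as a graded algebra to $\calA(\mathbf{T}_{L_j})$. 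The weak associativity packaged in Lemma \ref{lemma-associativity-bnabla-product} assembles these ingredients into a single isomorphism $\bcalJ(\ulu') \cong \calA(\mathbf{T}_{L_1} \bnabla \cdots \bnabla \mathbf{T}_{L_f})$. Composing with the isomorphism from (1) produces the desired identification of $\bcalJ(\ulu)$ with $\calA(\mathbf{T}_{L_1} \bnabla \cdots \bnabla \mathbf{T}_{L_f})$.

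The main obstacle is the bookkeeping for part (1): one must confirm that the abacus rule never forces us to swap two letters differing by exactly $\pm 1$ in $I_m$. The crucial invariant is that whenever $a_{p+1}$ is placed on line $j < q$, the only letters of the current expression standing to the right of its landing position are the trailing letters of lines $L_{j+1}^{(p)}, \ldots, L_q^{(p)}$, and the algorithm's own "no neighbor on line $k$" test forces each such letter to lie outside $\{a_{p+1}, a_{p+1} \pm 1\}$. Once this invariant is verified at each step of the algorithm, the iteration of commuting moves is automatic and the remainder of the proof is mechanical.
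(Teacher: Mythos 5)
Your proof follows essentially the same route as the paper's: part (2) read off from Definition \ref{def-abacus}, part (1) by iterating Theorem \ref{lemma-commuting-move-for-type-Atilde} over the commuting moves that the abacus algorithm performs, and part (3) by combining Theorem \ref{theo-Ju-isomorphic-to-ATu}, Corollary \ref{coro-theo-Ju-isomorphic-to-ATu} and Lemma \ref{lemma-associativity-bnabla-product}. You actually supply more detail than the paper's very terse proof, in particular the induction invariant showing that the ``no neighbor'' test of the abacus guarantees every elementary swap involves letters $b,c$ with $c\neq b, b\pm1$, which the paper leaves implicit.
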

\begin{proof}
  For the first assertion, note that we obtain the expression $\ulu'$ from $\ulu,$ applying a finite sequence of commuting moves. The conclusion follows from theorem \ref{lemma-commuting-move-for-type-Atilde}.

  The second assertion follows directly from definition \ref{def-abacus}.

  Finally the third assertion follows from lemmas \ref{lemma-special-nabla-product-for-calA-uw}, \ref{lemma-associativity-bnabla-product} and corollary \ref{coro-theo-Ju-isomorphic-to-ATu}
\end{proof}

Note that in particular, Theorem \ref{theo-summarizing} implies that for any expression $\ulu,$ the algebra $\bcalJ(\ulu)$ comes endowed with an optimal presentation codified by the matrix $T_{L_1}\bnabla\cdots \bnabla T_{L_{f}}.$

\begin{corollary}
  For any expression $\ulu\in W_m^{\ast},$ all the entries of the associated matrix $\mathbf{T}_{\ulu}$ are elements of the set $\{0,1,-1,-2\}.$
\end{corollary}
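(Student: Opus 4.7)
The plan is to compute the entries of $\mathbf{T}_{\ulu}$ directly from the definition: each is of the form $\partial_{a_j}\bigl(s_{a_{j+1}}\cdots s_{a_{r-1}}\circ\alpha_{a_r}\bigr)$ for $1\leq j<r\leq l^{\ast}(\ulu)$. I would first establish, by induction on the length of the acting expression, the structural claim that for any $\ulv\in W_m^{\ast}$ and any simple root $\alpha_b$, the element $\ulv\circ\alpha_b$ can be written as $\pm\balpha(\lfloor c,d\rfloor)$ (allowing the degenerate value $0$ via equation~\ref{eq-sum-of-roots-is-zero2}). The base case $\ulv=\emptyset$ is immediate since $\alpha_b=\balpha(\lfloor b,b\rfloor)$, and the inductive step is a direct read-off from Lemma~\ref{lemma-tables-action-vs-Demazure}(1)--(2) whenever the interval in question has length strictly less than $m-1$; the two remaining lengths are handled by Lemma~\ref{lemma-tables-action-vs-Demazure}(3)--(4) for the length-$(m-1)$ case and by $\sum_{a\in I_m}\alpha_a=0$ for the length-$m$ case.

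Once the structural claim is in place, each entry of $\mathbf{T}_{\ulu}$ has the form $\partial_c\bigl(\pm\balpha(\lfloor c',d'\rfloor)\bigr)$, and the rightmost column of each table in Lemma~\ref{lemma-tables-action-vs-Demazure}, combined with $\partial_c(-h)=-\partial_c(h)$, shows directly that every such value lies in $\{0,1,-1,-2\}$; the $-2$ appears only through the length-$(m-1)$ degenerate regime, which is already tabulated. As an alternative route (and as a consistency check) one can first invoke Theorem~\ref{theo-summarizing} to express $\bcalJ(\ulu)$ as $\calA(\mathbf{T}_{L_1}\bnabla\cdots\bnabla\mathbf{T}_{L_f})$ for intervals $L_j$ produced by the abacus algorithm, then bound the diagonal blocks by Lemma~\ref{lemma-matrix-T-for-interval-any-type} (entries in $\{0,-1,-2\}$) and the off-diagonal connection blocks $\bpartial_{L_1\cdots L_{j-1}}(Q_{L_j})$ by the same structural claim applied to the interval-shaped entries of $Q_{L_j}$.

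The main obstacle I expect is the careful bookkeeping around the length-$(m-1)$ and length-$m$ boundary, where the normal form $\pm\balpha(\lfloor c',d'\rfloor)$ wraps cyclically through $I_m$ and must be re-expressed using $\sum_{a\in I_m}\alpha_a=0$ in order to stay within the prescribed family of intervals; one must also verify that the sign of the coefficient in front of the normalized interval is tracked consistently, so that the Demazure operator produces a value in $\{0,1,-1,-2\}$ rather than an isolated $+2$. Once this normalization is carried out uniformly in $c'$ and $d'$, the entry bound is a finite case-check against the tables of Lemma~\ref{lemma-tables-action-vs-Demazure}.
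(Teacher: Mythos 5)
Your overall strategy is sound and, in its primary form, genuinely different from the paper's: the paper deduces the corollary in one line from Theorem \ref{theo-summarizing} together with Lemmas \ref{lemma-matrix-T-for-interval-any-type} and \ref{lemma-tables-action-vs-Demazure} (this is exactly your ``alternative route''), whereas you propose a direct induction showing that every element $\ulv\circ\alpha_b$ lies in the family $\{\pm\balpha(\lfloor c,d\rfloor)\}$ and then reading off the Demazure values from the tables. The structural claim itself is correct --- in fact the sign is never needed, since $-\alpha_e=\balpha(\lfloor e+1,e-1\rfloor)$, so the family of plain intervals $\balpha(\lfloor c,d\rfloor)$ (including the empty interval $0$ and the length-$(m-1)$ ones) is already closed under the action of every $s_c$, exactly as recorded in Lemma \ref{lemma-tables-action-vs-Demazure}.

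However, the step you defer --- ruling out the value $+2$ --- is not a bookkeeping issue about signs; it is the entire content of the statement beyond the trivial bound $\{0,\pm1,\pm2\}$, and it cannot be closed as stated. For a positive interval one has, by linearity of equation \ref{Demazure-type-A-tilde-m}, $\partial_c\bigl(\sum_a\lambda_a\alpha_a\bigr)=2\lambda_c-\lambda_{c-1}-\lambda_{c+1}$ with $\lambda_a\in\{0,1\}$, and the value $+2$ occurs precisely when the interval is the singleton $\{c\}$, i.e.\ when $s_{a_{j+1}}\cdots s_{a_{r-1}}\circ\alpha_{a_r}=\alpha_{a_j}$. This does happen for expressions in $W_m^{\ast}$: already for $\ulu=s_as_a$ the entry $t_{21}=\partial_a(\alpha_a)=2$, so $\mathbf{T}_{\ulu}$ has an entry outside $\{0,1,-1,-2\}$. (The same defect is present in the paper's own one-line proof: the abacus of Definition \ref{def-abacus} places the two copies of $a$ on different lines, and the connecting block is $\bpartial_{s_a}(\alpha_a)=[2]$.) So either the statement must be restricted --- to reduced expressions, or at least to expressions for which no conjugate $s_{a_{j+1}}\cdots s_{a_{r-1}}\circ\alpha_{a_r}$ degenerates to $\alpha_{a_j}$ --- or $+2$ must be admitted into the set. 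Your proposal correctly isolates this as ``the main obstacle'' but leaves it unverified, and as written the verification is impossible; that is a genuine gap, though it is one you share with the source.
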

\begin{proof}
  It follows directly from theorem \ref{theo-summarizing} and lemma \ref{lemma-tables-action-vs-Demazure}.
\end{proof}

\begin{theorem}\label{theo-isomorph-for-vertical-case}
  Let $\ulu=\lfloor a,a-1\rfloor^{h}=\lfloor a,a-1\rfloor\cdots \lfloor a,a-1\rfloor,$ ($h-$factors). Then the algebra $\bcalJ(\ulu)$ is isomorphic to the Gelfand-Tsetlin subalgebra of the vertical idempotent truncation $\boldsymbol{\calB}(\underline{\boldsymbol{i}})$ of the generalized blob algebra $\boldsymbol{\calB}=\boldsymbol{\calB}_{l,n}^{\F}(e)$ with $l=m+1$ and $n=he$ (see \cite{Lobos1} for notation).
\end{theorem}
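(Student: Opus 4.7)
The plan is to reduce the theorem to a comparison of two explicit presentations in the category $\bcalNGAT$. By Theorem \ref{theo-Ju-isomorphic-to-ATu}, the algebra $\bcalJ(\ulu)$ is isomorphic to the abstract nil graded algebra $\calA_{\ulu}=\calA(\mathbf{T}_{\ulu})$, so the problem becomes one of computing (or at least characterising up to $\bcalNGAT$-isomorphism) the matrix $\mathbf{T}_{\ulu}$ and matching it to the triangular matrix that encodes the Gelfand-Tsetlin subalgebra of $\boldsymbol{\calB}(\underline{\boldsymbol{i}})$ obtained in \cite{Lobos1}.

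First I would compute $\widetilde{\mathbf{T}}_{\lfloor a,a-1\rfloor}$ using lemma \ref{lemma-matrix-T-for-interval-any-type}(3) and then, writing $\ulu=\lfloor a,a-1\rfloor^{h}$ as a product of $h$ intervals, apply Theorem \ref{theo-summarizing}(3) to identify
\[
 \mathbf{T}_{\ulu}\;\cong\; \mathbf{T}_{\lfloor a,a-1\rfloor}\bnabla\cdots\bnabla \mathbf{T}_{\lfloor a,a-1\rfloor}\qquad(h\text{ factors}).
\]
The coupling blocks are given by $\bpartial_{\ulu_{<j}}(Q_{\lfloor a,a-1\rfloor})$ with $\ulu_{<j}=\lfloor a,a-1\rfloor^{j-1}$, and lemma \ref{lemma-tables-action-vs-Demazure}, combined with the fact that $\lfloor a,a-1\rfloor$ represents the longest element of a parabolic subgroup whose orbit on roots is completely described by the tables of that lemma, makes these blocks entirely computable. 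I expect that the block $\bpartial_{\ulu_{<j}}(Q_{\lfloor a,a-1\rfloor})$ depends only on the relative position $j$ (and not on the absolute one) because commuting moves and the $W_m$-action respect the cyclic structure of $I_m$; this is the technical heart of the calculation.

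Next, on the blob side, I would recall from \cite{Lobos1} the explicit matrix $T^{\mathrm{blob}}$ whose associated algebra $\calA(T^{\mathrm{blob}})$ is isomorphic to the Gelfand-Tsetlin subalgebra of $\boldsymbol{\calB}(\underline{\boldsymbol{i}})$ in the vertical case $l=m+1$, $n=he$. The generalised blob algebra in the vertical setting is built from $h$ successive \emph{cycles} of length $e$, and in \cite{Lobos1} the corresponding Jucys-Murphy generators satisfy relations of exactly the same shape as those produced by the iterated $\bnabla$-product above. Matching the two matrices row by row, I would define a linear assignment $X_{k}\mapsto \gamma_{k}Y_{k}$ with suitable nonzero scalars $\gamma_{k}\in \F^{\times}$ (rescalings absorbing the differences between the Demazure-type and the blob-type constants) and check that it respects the defining quadratic relations \eqref{rel-nilalgebra-associated-to-T}. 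Invertibility of the associated matrix $\Gamma$ is then immediate, and Corollary \ref{coro-conditions3-for-isomorphisms} delivers the isomorphism in $\bcalNGAT$.

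The main obstacle is step two: extracting a closed form for the iterated $\bnabla$-product of $\widetilde{\mathbf{T}}_{\lfloor a,a-1\rfloor}$ with itself, since lemma \ref{lemma-matrix-T-for-interval-any-type}(3) is the \emph{maximal} case $l^{\ast}=m$ where the last row differs from the previous pattern (the $-2$ entry and the anomalous vector $\alpha_{a}$ appear). One has to track carefully how the action $\ulu_{<j}\circ Q_{\lfloor a,a-1\rfloor}$ cycles through the roots of the form $\balpha(\lfloor\cdot,\cdot\rfloor)$, using the telescoping identity \eqref{eq-sum-of-roots-is-zero}. Once this bookkeeping is under control the matching with the blob side is a mechanical check, after which Theorem \ref{theo-Ju-isomorphic-to-ATu} together with the analogous statement proved in \cite{Lobos1} close the argument.
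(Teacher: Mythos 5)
Your proposal follows essentially the same route as the paper: both sides are reduced to nil graded algebras strongly associated to explicit strictly lower triangular matrices (via Theorem \ref{theo-Ju-isomorphic-to-ATu} and the iterated $\bnabla$-product of $\widetilde{\mathbf{T}}_{\lfloor a,a-1\rfloor}$ on the Soergel side, and the presentation plus the dimension count $2^{hm}$ from \cite{Lobos1} on the blob side), and you correctly identify the bookkeeping of the maximal-length interval with its $-2$ entries as the technical heart. The only point where you overshoot is the anticipated diagonal rescaling $X_k\mapsto\gamma_k Y_k$: the two matrices turn out to coincide exactly, so the isomorphism is immediate from both algebras being strongly associated to the same matrix and no appeal to Corollary \ref{coro-conditions3-for-isomorphisms} is needed.
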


\begin{proof}
  From \cite{Lobos1}, we know that the Gelfand-Tsetlin subalgebra of the vertical idempotent truncation $\boldsymbol{\calB}(\underline{\boldsymbol{i}})$ of the generalized blob algebra $\boldsymbol{\calB},$ is an algebra $\mathbb{D},$ defined by generators:

  \begin{equation*}
    \mathbb{L}_{(r,j)},\quad (1\leq r \leq h,0 \leq j \leq l-2)
  \end{equation*}

  and relations
  \begin{equation*}
    (\mathbb{L}_{(r,j)})^2=\sum_{(t,i)<(r,j)}{C_{(t,i)}\mathbb{L}_{(t,i)}\mathbb{L}_{(r,j)}},
  \end{equation*}

  where $(t,i)<(r,j)$ is the lexicographic order and

  \begin{equation*}
    C_{(t,i)}=\left\{\begin{array}{cc}
                      -2 & \quad \textrm{if}\quad i=j \\
                      -1 & \quad \textrm{otherwise}
                    \end{array}\right.
  \end{equation*}

  Since the lexicographic order is a total order on $\{(r,j):1\leq r \leq h,0 \leq j \leq m-1\}$ we can re enumerate increasingly the elements $\mathbb{L}_{(r,j)}$ as $\mathbb{L}_1,\mathbb{L}_2,\dots\mathbb{L}_{hm}.$ With this notation we can check that the algebra $\mathbb{D}$ is a nil-graded algebra associated to the matrix $T=[t_{kj}]$ given by
  \begin{equation*}
    t_{kj}=\left\{\begin{array}{cc}
                    0 & \quad \textrm{if}\quad k\geq j \\
                    -2 & \quad \textrm{if}\quad k-j\in (m-1)\mathbb{Z} \\
                    -1 & \quad \textrm{otherwise}
                  \end{array}\right.
  \end{equation*}
  We also know from \cite{Lobos1}, that $\dim(\mathbb{D})=2^{hm},$ therefore the algebra $\mathbb{D}$ is a nil graded algebra strongly associated to the matrix $T,$ then it is isomorphic to the algebra $\calA(T).$

  The isomorphism with $\bcalJ_0(\ulu)$ follows from lemmas \ref{lemma-matrix-T-for-interval-any-type}, \ref{lemma-tables-action-vs-Demazure}, and \ref{lemma-associativity-bnabla-product}. Basically one can check that the extended matrix $\widetilde{\mathbf{T}}_{\lfloor a, a-1\rfloor^{h}}$ looks like:

  \begin{equation*}
    \left[\begin{matrix}
     \alpha_a & 0 & 0 & \cdots & \cdots & 0& 0& \cdots\\
     \balpha(\lfloor a, a+1\rfloor) & -1 & 0 & \cdots & \cdots & 0 & 0& \cdots \\
      \vdots & \vdots & \ddots & \ddots & \cdots & \vdots& \vdots& \cdots \\
       \balpha(\lfloor a, a-2\rfloor) & -1 & -1 & \ddots & 0 & 0 & 0& \cdots\\
       \alpha_a & {\color{blue}-2} & -1 & \ddots & -1 & 0 & 0& \cdots\\
       \balpha(\lfloor a, a+1\rfloor) & -1 &  {\color{blue}-2} & -1 & \ddots & -1 & 0& \cdots \\
       \vdots & \vdots & \ddots & \ddots & \ddots & \ddots& \ddots & \ddots\\
       \alpha_a & {\color{blue}-2} & -1 & \cdots &  {\color{blue}-2} &  -1 & -1& \ddots\\
       \balpha(\lfloor a, a+1\rfloor) & -1 &  {\color{blue}-2} & -1 & \cdots & {\color{blue}-2} & -1& \ddots\\
       \vdots & \vdots & \ddots & \ddots & \ddots & \ddots& \ddots& \ddots \\
       \end{matrix}\right]..
  \end{equation*}

  In particular the matrix $\mathbf{T}_{\ulu}=[t_{kj}]$ is given by
  \begin{equation*}
    t_{kj}=\left\{\begin{array}{cc}
                    0 & \quad \textrm{if}\quad k\geq j \\
                    -2 & \quad \textrm{if}\quad k-j\in (m-1)\mathbb{Z} \\
                    -1 & \quad \textrm{otherwise}
                  \end{array}\right.
  \end{equation*}
  Therefore the algebras $\bcalJ(\ulu)$ and $\mathbb{D}$ are nil graded algebras strongly associated to the same matrix, and then they are isomorphic.
\end{proof}

\begin{remark}
  The case when $m=2$ correspond to the \emph{Dot-line algebra} developed by Espinoza and Plaza in their article \cite{Esp-Pl}. In that case note that all the not zero entries in the matrix $\mathbf{T}_{\ulu}$ are equal to $-2.$
\end{remark}

\sc
diego.lobos@pucv.cl, Pontificia Universidad Cat\'olica de Valpara\'iso, Chile.

\end{document}